\DeclareMathAlphabet{\mathcal}{OMS}{cmsy}{m}{n}
\newtheorem{theorem}[equation]{Theorem}
\newtheorem*{theorem*}{Theorem}
\newtheorem{lemma}[equation]{Lemma}
\newtheorem{proposition}[equation]{Proposition}
\newtheorem{corollary}[equation]{Corollary}
\newtheorem{definition}[equation]{Definition}
\newtheorem{remark}[equation]{Remark}
\newtheorem{notation}[equation]{Notation}
\newtheorem{assumption}[equation]{Assumption}
\newtheorem{conjecture}[equation]{Conjecture}
\numberwithin{equation}{section}
\newcommand{\mres}{\mathbin{\vrule height 1.6ex depth 0pt width 0.13ex\vrule height 0.13ex depth 0pt width 1.3ex}}
\newcommand{\R}{\mathbb{R}}
\newcommand{\CCC}{\mathsf{C}}
\newcommand{\UUU}{\mathsf{U}}
\newcommand{\Ccal}{\mathcal{C}}
\newcommand{\Acal}{\mathcal{A}}
\newcommand{\Grp}{\mathscr{G}}
\newcommand{\abs}[1]{\left\lvert#1\right\rvert}
\newcommand{\supp}{\mathrm{supp}}
\newcommand{\uC}{{\mathscr{C}}}
\newcommand{\uS}{{\mathscr{S}}}
\title{Self-expanders of Positive Genus}
\author[G.~Shao]{Guanhua~Shao}
\address{Department of Mathematics, Rutgers University, Pistacaway, NJ 08854} 
\email{gs977@math.rutgers.edu}
\author[J.~Zou]{Jiahua~Zou} 
\address{Department of Mathematics, Rutgers University, Pistacaway, NJ 08854} 
\email{jiahua.zou@rutgers.edu} 
\begin{document}
\date{\today}

	   \keywords{Differential geometry, self-expanders, mean curvature flow, partial differential equations}

\begin{abstract}
For a general class of cones in $\mathbb{R}^3$, we construct self-expanders of positive genus asymptotic to these cones. As a result, we use these self-expanders to construct a mean curvature flow with genus strictly decreasing but not to zero at the first singular time. We also construct a sequence of self-expanders with unbounded genus which are asymptotic to the same rotationally symmetric cone.
\end{abstract}

\maketitle
\section{Introduction}
\subsection*{Background and the main result}
A hypersurface $\Sigma^n \subset \mathbb{R}^{n+1}$ is called a \emph{self-expander} if 
\begin{equation*}
    H_{\Sigma} = \frac{1}{2} X_{\Sigma}\cdot \nu_{\Sigma}.
\end{equation*}
Here for the mean curvature we use the convention $H_{\Sigma}:=\Delta_{\Sigma} X_{\Sigma}\cdot \nu_{\Sigma}$, where $X_{\Sigma}$ is the embedding of $\Sigma$, $\Delta_{\Sigma}$ is the Laplacian operator on $\Sigma$ induced from the Euclidean metric in $\R^{n+1}$, and $\nu_{\Sigma}$ is a choice of unit normal vector. Self-expanders arise naturally in the study of mean curvature flow. Indeed, $\Sigma$ is a self-expander if and only if the family of homothetic hypersurfaces
\begin{equation*}
	(0,\infty)\ni t\mapsto \sqrt{t}\Sigma 
\end{equation*}
is a mean curvature flow (MCF). %, that is, a solution to the flow
%\[
%\left(\frac{\partial x}{\partial t}\right)^{\perp} = \mathbf{H}_{\Sigma}.
%\]

Self-expanders model the behavior of a MCF as it emerges from a conical singularity \cite{angenent1995computed} as well as the long time behavior of the flow \cite{Ecker1989Mean}. On the other hand, from the point of view of minimal surfaces, self-expanders are critical points of the functional
\begin{equation}\label{eq:E}
E[\Sigma]:= \int_{\Sigma} e^{\frac{\abs{X}^2}{4}} \, d\mathcal{H}^n,
\end{equation}
i.e., minimal surfaces in the conformal metric
\begin{equation}\label{eq:gexp}
 g_{Exp}:=e^{\frac{\abs{X}^2}{2n}}g_{Euc}.
\end{equation}

Self-expanders have been studied extensively by various authors. Ecker and Huisken \cite{Ecker1989Mean} studied the mean curvature flow of entire graphs. They used normalized flows to show that for any smooth graphical cone $\Ccal$, there is a unique self-expander asymptotic to $\Ccal$ (cf. \cite[p.2]{Bernstein2022Topological}) (Lemma \ref{graphical self-expander} below). Using ODE analysis and numerical evidence, Angenent-Ilmanen-Chopp \cite{angenent1995computed} gave explicit examples of rotationally symmetric self-expanders and fattening phenomena for the level set flows starting from them. In \cite[p. 13-14]{ilmanen1998lectures}, Ilmanen showed the existence of $E$-minimizing self-expanders asymptotic to prescribed cones in the Euclidean space using variational constructions. Ding \cite{ding2020minimal} used barriers to give an alternative proof of this existence result by Ilmanen \cite[p.13-14]{ilmanen1998lectures}. And he showed that for any sufficiently smooth mean-convex but not minimizing cone, the corresponding $E$-minimizing self-expander is strictly mean convex. Bernstein and Wang developed a degree-theoretic method to produce asymptotically conical self-expanders of prescribed topological type in \cite{Bernstein2021The,Bernstein2021Smooth, Bernstein2023Integer} and a min-max theory for asymptotically conical self-expanders in \cite{Bernstein2022Mountain}.

In spite of their expected prevalence, to the authors' knowledge, there is no explicit example of a self-expander with positive genus in $\mathbb{R}^3$. In this paper, for a general class of cones, we show that there always exist connected self-expanders of positive genus asymptotic to these cones. 
\begin{definition}\label{def:Pi}
     For any $g \in \mathbb{N}$ and $\delta > 0$, we define $\Pi[g, \delta]$ to be a collection of smoothly embedded cones in $\mathbb{R}^3$ where each $\Ccal[g] \in \Pi[g, \delta]$ satisfies the following.
    \begin{enumerate}[(i)]
        \item $\Ccal[g] \subset \{x^2 + y^2 > \delta z^2\}$.
        \item The link $\mathcal{L}[\Ccal[g]]$ of $\Ccal[g]$ has $3$ components $\mathcal{L}_1, \mathcal{L}_2, \mathcal{L}_3$.
        \item The unique simple cone $\Ccal_i[g]$ with link $\mathcal{L}_i$ is graphical for each $i \in \{1, 2, 3\}$.
        \item  $\Ccal[g]$ has $\Grp[g]$-symmetry.
        \end{enumerate}
\end{definition}
The group $\Grp[g] \cong D_{2(g + 1)}$ % is generated by $\{\sigma_{v}[0],\sigma_{v}[\frac{\pi}{g+ 1}],\UUU[\frac{\pi}{2(g + 1)}]\}$ 
is defined in \eqref{eq:G} below. Our main result is the following.

\begin{theorem} \label{the main result of the paper}
    There is a universal constant $\delta_0 > 0$ such that for any $g \in \mathbb{N}$ and $\Ccal[g] \in \Pi[g, \delta_0]$, there exists a complete connected $\Grp[g]$-equivariant self-expander $\Sigma[g]=\Sigma[\Ccal[g]]$ of genus $g$. Moreover, $\Sigma[g]$ is asymptotic to the cone $\Ccal[g]$. 
\end{theorem}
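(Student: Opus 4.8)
The plan is to produce $\Sigma[g]$ by a desingularization (gluing) construction followed by a fixed-point argument, in the spirit of the constructions of Kapouleas and of Kapouleas--Kleene--M{\o}ller.

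\emph{Step 1 (approximate solution).} Since each $\Ccal_i[g]$ is graphical, the existence results of Ecker--Huisken, Ilmanen and Ding furnish $\Grp[g]$-symmetric genus-zero self-expanders: one, $\Sigma'$, asymptotic to the double cone $\Ccal_1[g]\cup\Ccal_3[g]$ (whose two links are interchanged by the $\Z_2$-factor of $\Grp[g]$), a topological cylinder with a catenoidal waist about the axis; and one, $\Sigma''$, asymptotic to the remaining cone $\Ccal_2[g]$, a topological plane. For $\Ccal[g]\in\Pi[g,\delta_0]$ these two surfaces come uniformly and $\Grp[g]$-equivariantly close along a $\Grp[g]$-invariant circle $\gamma$ near the waist. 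I would then excise a thin tubular neighbourhood of $\gamma$ and insert $g+1$ symmetrically placed copies of a small, rescaled and suitably rotated Scherk-type core $\K$ --- a minimal ``neck'' whose four wings, two lines in $\RP$, match the tangent planes of $\Sigma'$ and $\Sigma''$ along $\gamma$ --- interpolating with cutoff functions $\cutoff{r_1}{r_2}{s}$ to obtain a smooth, embedded, connected, $\Grp[g]$-equivariant surface $M=M[\Ccal[g]]$. An Euler-characteristic count (the cylinder $\Sigma'$ and the plane $\Sigma''$ joined by $g+1$ handles, with three ends in all) gives $\chi(M)=-(2g+1)$, so $M$ has genus exactly $g$; since $M$ agrees with $\Sigma'\sqcup\Sigma''$, and hence with the cones, outside a compact set, it is complete and asymptotic to $\Ccal[g]$.

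\emph{Step 2 (linear theory).} Writing the desired surface as a normal graph $\Sigma[g]=\mathrm{graph}_M(u)$, the self-expander equation becomes $L_M u+Q_M(u)=-\mathcal E_M$, with $\mathcal E_M$ the ``initial error'' (supported in the gluing annuli), $Q_M$ quadratic, and $L_M$ the Jacobi operator of $E$ --- self-adjoint for the weighted measure $e^{\abs{X}^2/4}\,d\mathcal H^2$, an asymptotically-conical drift operator on the three ends, and a small perturbation of $\Delta+\abs{A}^2$ near the necks. By construction $\norm{\mathcal E_M}$ is small in suitable weighted H\"older norms (exponentially/polynomially weighted at the ends, where $L_M$ is Fredholm by the Bernstein--Wang analysis of such drift operators, and scaled near the necks, on which the self-expander terms are of strictly lower order than the minimal-surface operator). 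The cores are flexible: the saddle-tower Jacobi operator has a bounded approximate kernel spanned by geometric Jacobi fields (translations and the scale/tilt/``unbalancing'' modes of the tower). Restricting to $\Grp[g]$-invariant functions removes all but finitely many of these; the rest are cancelled by attaching the corresponding geometric parameters --- the radius of $\gamma$, the common neck scale, an unbalancing parameter --- to the family of approximate solutions. The crucial analytic point is that, modulo the resulting finite-dimensional approximate cokernel, $L_M$ admits a right inverse whose norm is bounded \emph{uniformly in $g$}: the $(g+1)$-fold symmetry makes the gluing region periodic of small period, reducing the estimate to a $g$-independent model problem on a fundamental domain.

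\emph{Step 3 (fixed point and balancing).} For each admissible value of the finitely many parameters, a contraction mapping in the weighted space solves the projected equation and yields $u$, continuous in the parameters, with $\norm{u}\lesssim\norm{\mathcal E_M}$. It remains to choose the parameters so that the projection of the equation onto the approximate cokernel vanishes --- a finite-dimensional ``balancing system'' whose principal part is the flux/force balance of $\Sigma'$ against $\Sigma''$ across $\gamma$. The hypotheses of Definition~\ref{def:Pi} (links confined to $\{x^2+y^2>\delta_0 z^2\}$ and each $\Ccal_i[g]$ graphical) should guarantee that $\Sigma'$ and $\Sigma''$ meet $\gamma$ transversally with nondegenerate flux, making the system solvable by a degree (or intermediate-value) argument for a suitable \emph{universal} $\delta_0$. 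The resulting $u$ then produces a genuine self-expander $\Sigma[g]=\mathrm{graph}_M(u)$, which --- being a small, decaying normal graph over the complete embedded connected genus-$g$ surface $M$, with $u$ taken $\Grp[g]$-invariant --- is itself complete, embedded, connected, $\Grp[g]$-equivariant, of genus $g$, and asymptotic to $\Ccal[g]$.

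\emph{Main obstacle.} The hard part, I expect, is Step~2 together with the balancing in Step~3: identifying the approximate kernel of the saddle-tower Jacobi operator and killing it by symmetry, obtaining a right inverse for $L_M$ uniform in $g$ while simultaneously controlling the drift operator on the three conical ends, and then verifying that the flux balance of $\Sigma'$ and $\Sigma''$ along $\gamma$ is nondegenerate for \emph{every} member of $\Pi[g,\delta_0]$ --- it is this uniform nondegeneracy that should be responsible for the single universal constant $\delta_0$ rather than a $g$-dependent threshold.
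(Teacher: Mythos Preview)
Your proposal follows a genuinely different route from the paper --- a Kapouleas-type desingularization/gluing argument --- whereas the paper's proof is entirely variational. Let me first say what the paper does, then point out where your approach runs into real trouble.

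\textbf{What the paper actually does.} The universal constant $\delta_0$ is fixed by a \emph{barrier} lemma (Lemma~\ref{Unfiorm lower bound for minimal surfaces using barriers} and Assumption~\ref{Annulus has arbitr aily small area}): one takes the rotationally symmetric graphical expanders $\Lambda_1(\delta_0),\Lambda_2(\delta_0)$ asymptotic to $\{x^2+y^2=\delta_0 z^2\}$ and chooses $\delta_0$ large enough that any stable self-expander trapped between them inside $B_1$ has weighted area at least $\tfrac12\mathcal{H}^2_w(P_{xy}\cap B_1)$, while the spherical annulus between them on $\partial B_1$ has weighted area less than $\tfrac14\mathcal{H}^2_w(P_{xy}\cap B_1)$. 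Then for each large $R$ one runs the $\Grp[g]$-equivariant Meeks--Simon--Yau minimization (Proposition~\ref{Equivariant version of the minimzing problem}) in the isotopy class of a Costa--Hoffman--Meeks-type initial surface inside $\Omega(\delta_0)\cap B_R$. If the minimizer disconnects into three disks, one performs a surgery in $B_1$ --- replace the two disk-caps $C_{1,R}\cap B_1$, $C_{3,R}\cap B_1$ by the small annulus on $\partial B_1$ --- which by the choice of $\delta_0$ \emph{decreases weighted area by a fixed amount} $\tfrac12\mathcal{H}^2_w(P_{xy}\cap B_1)$, re-desingularize along one curve, and minimize again. Since the area is bounded below, this iteration terminates in finitely many steps at a connected $\Sigma_R$, and Riemann--Hurwitz forces $g(\Sigma_R)=g$ (Proposition~\ref{prop:SigmaR}). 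Finally one lets $R\to\infty$ and uses the barriers $E_p$ of Bernstein--Wang/Ding (Proposition~\ref{Convergence to the limit self-expander}) to obtain the complete limit asymptotic to $\Ccal[g]$; Riemann--Hurwitz again pins the genus.

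\textbf{Gaps in your approach.} There are two substantive problems, and they are not merely technical.

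First, gluing is perturbative: it needs a small parameter to make $\|\mathcal E_M\|$ small enough for the contraction to close. In a Kapouleas desingularization the relevant smallness is the period of the Scherk tower, which here is $\sim |\gamma|/(g+1)$. Thus the construction, if it works at all, works only for $g$ sufficiently large (with the threshold depending on the cone); it cannot yield the theorem for \emph{every} $g\in\mathbb{N}$ and every $\Ccal[g]\in\Pi[g,\delta_0]$. Your claim of a right inverse ``bounded uniformly in $g$'' does not help here --- even granting it, the initial error is not small for small $g$, so the fixed-point iteration does not start. Nothing in Definition~\ref{def:Pi} supplies an alternative small parameter: $\delta_0$ only confines the links to a solid cone, it does not force the background expanders to be nearly tangent along $\gamma$.

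Second, your Step~1 assumes the existence of a connected annular self-expander $\Sigma'$ asymptotic to the double cone $\Ccal_1[g]\cup\Ccal_3[g]$. Ecker--Huisken gives only the two \emph{disjoint} graphical sheets $\Lambda_1,\Lambda_3$; the $E$-minimizer of Ilmanen/Ding may very well be $\Lambda_1\sqcup\Lambda_3$ rather than a connected cylinder (this is exactly the Plateau dichotomy between two disks and a catenoid). Producing a connected $\Sigma'$ for an arbitrary member of $\Pi[g,\delta_0]$ is itself a nontrivial existence problem of the same flavor as the theorem you are trying to prove --- and in the paper's scheme it is precisely the failure mode that the surgery-plus-iteration is designed to defeat. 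Without $\Sigma'$ there is no intersection circle $\gamma$ and no approximate solution to glue.

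In short: the variational route buys you existence for \emph{all} $g$ and all cones in the class at once, with $\delta_0$ coming from a quantitative barrier/area comparison; the gluing route, even if the linear analysis on the conical ends and the Scherk cores could be carried out, is intrinsically asymptotic in $g$ and presupposes an input ($\Sigma'$) that is not available in general.
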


\begin{remark}[Different genera coming from the same cone and fattening] \label{fattening of the level set flow of the cone}
For any cone $\Ccal[g] \in \Pi[g, \delta_0]$, By Lemma \ref{graphical self-expander} below, for each $i \in \{1, 2, 3\}$, there is a unique self-expander $\Lambda_i$ asymptotic to $\Ccal_i$. Moreover, each $\Lambda_i$ is graphical. Let $\Lambda : = \bigcup_{i =1}^3 \Lambda_i$ be the self-expander consisting of these three disjoint graphical sheets. Then $\Lambda$ and $\Sigma[g]$ are both asymptotic to $\Ccal[g]$. However, $\Lambda$ has genus zero, while $\Sigma[g]$ has positive genus $g$.
By \cite[Theorem 2.4]{Chodosh2024Conical}, this non-uniqueness of self-expanders coming from the same cone will then lead to fattening of the level set flow starting from $\Ccal[g]$.
\end{remark}

\begin{remark} \label{remark of the main result}
Let $\breve{\Ccal}[g]$ be the asymptotic cone of the shrinker $\breve{M}[g+1,1]$ we constructed in \cite{GZ1} with genus $2g$ and three ends. By \cite[Theorem 8.1]{GZ1} and , for all $g \in \mathbb{N}$ sufficiently large, clearly $\breve{\Ccal}[g] \in \Pi[g, \delta_0]$ holds. Thus, by Theorem \ref{the main result of the paper} above, we can construct a complete connected genus-$g$ self-expander $\breve{\Sigma}[g]$ asymptotic to $\breve{\Ccal}[g]$ for all $g \in \mathbb{N}$ sufficiently large.
\end{remark}
\subsection*{Strict genus reduction}
In his lecture notes \cite{ilmanen1998lectures}, Ilmanen raised the following \emph{Genus Reduction Conjecture}:
 \begin{conjecture} [{\cite[p.23]{ilmanen1998lectures}}] \label{Genus Reduction Conjecture}
     Let $M_t$ be any mean curvature flow in $\mathbb{R}^3$. Then the genus of $M_t$ strictly decreases at the time of a singularity, unless the singularity is a neck-pinch or a shrinking sphere.
 \end{conjecture}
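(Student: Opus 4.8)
\emph{Proof strategy.} The plan is to reduce the global, all-times statement to a purely local analysis at each point of the singular set, and there to relate the intrinsic genus of the blow-up model to the net change in genus of the ambient flow. Throughout I would work with the level-set / Brakke flow so that $M_t$ is defined past the singular time $T$, and I would use White's partial regularity together with Bamler--Kleiner's multiplicity-one theorem for flows of surfaces in $\mathbb{R}^3$, so that for a.e.\ $t$ near $T$ the flow is a smooth embedded surface with a well-defined genus; write $g_-$ and $g_+$ for its genus just before and just after $T$. The baseline input is the non-strict monotonicity $g_+ \le g_-$, which follows from the local structure of the flow and is the more accessible half of the problem; the entire content of the conjecture is to upgrade this to a strict inequality away from the two excluded models.

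First I would localize the topological change. By Huisken's monotonicity formula and White's stratification, the singular set of the flow at time $T$ is a compact set of dimension at most one, and parabolic rescaling about each singular point produces a tangent flow $\sqrt{-t}\,\Sigma$ with $\Sigma$ a smooth embedded multiplicity-one self-shrinker. Since the topology of $M_t$ is locally constant in space-time away from the singular set, the change $g_- \to g_+$ decomposes as a sum of local contributions, one for each connected piece of the singular set, each determined entirely by how the weak flow resolves its shrinker model $\Sigma$. Next I would separate the excluded models from the exceptional ones: by Brendle's classification, the only smooth embedded genus-zero self-shrinkers in $\mathbb{R}^3$ are the plane, the round sphere $\mathbb{S}^2$, and the round cylinder $\mathbb{S}^1\times\mathbb{R}$, corresponding respectively to a regular point, a shrinking sphere, and a neck-pinch, and these are exactly the configurations the conjecture permits to preserve genus.

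Hence any singular piece that is neither a shrinking sphere nor a neck-pinch is modeled on a shrinker $\Sigma$ of positive genus; such $\Sigma$ is either compact (shrinking to a point, e.g.\ an Angenent-type torus) or asymptotically conical with several ends, and in every case it carries at least one handle. The crux is then a local accounting theorem: passing through such a singularity strictly lowers the local genus, because the handles carried by $\Sigma$ cannot all be recreated by the outgoing flow. To make this precise I would build a space-time model in a parabolic neighborhood of the singular point --- the incoming surface modeled on a slightly dilated $\Sigma$, and the outgoing surface modeled by the self-expander that resolves the asymptotic cone of $\Sigma$ --- and then run a Mayer--Vietoris computation of $H_1$ to read off the net drop in genus. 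Here the self-expanders produced in Theorem~\ref{the main result of the paper}, which resolve a prescribed cone while strictly reducing genus but not to zero, are precisely the outgoing models this accounting must accommodate, cf.\ Remark~\ref{remark of the main result} and \cite{GZ1}.

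The hard part --- and the reason the conjecture is still open --- is exactly this last local-to-global accounting. Two obstructions compound. First, positive-genus self-shrinkers in $\mathbb{R}^3$ are not classified, so the argument cannot proceed model-by-model; it must extract the strict inequality $\Delta g < 0$ from structural data alone (embeddedness, multiplicity one, entropy bounds). Second, and more seriously, one needs a genuinely fine description of how the weak flow resolves a higher-genus singularity for $t$ just above $T$, effectively a local regularity-and-topology theorem across the singular time, which goes beyond the present structure theory even granting multiplicity one; in particular one must rule out that the chosen resolution recreates as many handles as $\Sigma$ carried. I expect that a complete proof will combine Bamler--Kleiner's structure theory with a surgery-type description of the flow near each exceptional singular point, and that the strict-but-not-to-zero examples of this paper are exactly the test cases any correct local accounting must reproduce.
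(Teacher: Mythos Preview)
The statement you are attempting to prove is labeled in the paper as a \emph{Conjecture}, not a theorem: it is Ilmanen's Genus Reduction Conjecture, quoted from \cite{ilmanen1998lectures}, and the paper neither claims nor provides a proof of it. The paper's role is to construct examples (Corollary~\ref{Genus Drop example of a MCF}) that are \emph{consistent with} the conjecture, not to resolve it. There is therefore no ``paper's own proof'' against which to compare your proposal.

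As for the proposal itself, it is not a proof but an outline of a strategy, and you say so explicitly: you identify the ``hard part'' as the local-to-global accounting across a positive-genus singularity, you note that positive-genus shrinkers are unclassified, and you state that ruling out handle re-creation by the outgoing expander ``goes beyond the present structure theory.'' These are precisely the obstructions that keep the conjecture open, and your write-up does not overcome any of them; it catalogues them. In particular, the Mayer--Vietoris step you sketch requires as input a description of the outgoing flow near $T^+$ that is not currently available in general, and invoking the self-expanders of Theorem~\ref{the main result of the paper} does not help here, since those expanders are built only for the special cone class $\Pi[g,\delta_0]$ and in any case demonstrate that the resolution \emph{can} retain positive genus --- so the strict drop must come from comparing $g(\Sigma)$ to $g(\text{expander})$, a comparison you do not carry out. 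The honest summary is that your proposal is a reasonable roadmap for where a proof would have to go, but it contains no argument that closes the gap, and the paper makes no claim to close it either.
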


White \cite[Theorem 1]{White1995Topology} (cf. \cite[p. 20]{ilmanen1998lectures}) first showed that along the level set flow, genus of the compact outermost flow is non-increasing. His result \cite[Theorem 1]{White1995Topology} implies that the MCF shall simplify the topology of the surfaces. Recently, by the works of Brendle \cite{Brendle2016Embedded}, Ilmanen-White \cite[p.21]{ilmanen1998lectures}, Chodosh-Schulze \cite[Corollary 1.2]{Chodosh2021Uniqueness}, Chodosh-Daniels-Schulze \cite{Chodosh2024Conical}, Bamler-Kleiner \cite{bamler2023multiplicity}, Chodosh-Choi-Schulze-Mantoulidis \cite{Chodosh2024Generic,Chodosh2023Mean}, Conjecture \ref{Genus Reduction Conjecture} has been fully resolved at non-generic singularities for outermost flows. Combining Remark \ref{remark of the main result} with the shrinker $\breve{M}_g=\breve{M}[g+1,1]$ we constructed in \cite[Theorem 8.1]{GZ1}, we have the first example of an MCF whose genus strictly decreases at a singular time but does not drop to zero.
\begin{corollary} \label{Genus Drop example of a MCF}
For each $g \in \mathbb{N}$ sufficiently large, there is a mean curvature flow $\{M_t^g\}_{t \in \mathbb{R}}$ in $\mathbb{R}^3$ satisfying the following:
\begin{enumerate}[(i)]
  \item For $t < 0$, $M_t^g := \sqrt{-t} \cdot \breve{M}_g$, where $M_t^g$ is smooth and self-similarly shrinking. Moreover, $M_t^g$ has genus $2g$.
    \item At $t = 0$, $M_0^g = \breve{\Ccal}[g]$ is a triple cone with a unique singularity at $x = 0$.
    \item For $t > 0$, $M_t^g : = \sqrt{t} \cdot \breve{\Sigma}[g]$ is smooth and self-similarly expanding. Moreover, $M_t^g$ has genus $g$.
\end{enumerate}
\end{corollary}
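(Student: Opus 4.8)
This corollary is essentially immediate once Theorem~\ref{the main result of the paper} and the results of \cite{GZ1} are in hand: the flow $\{M^g_t\}$ will simply be the concatenation of the homothetically shrinking flow of $\breve{M}_g$, the static triple cone $\breve{\Ccal}[g]$, and the homothetically expanding flow of $\breve{\Sigma}[g]$. First I would assemble the pieces. By Remark~\ref{remark of the main result} there is $g_0\in\N$ with $\breve{\Ccal}[g]\in\Pi[g,\delta_0]$ for all $g\ge g_0$; fix such a $g$. From \cite[Theorem~8.1]{GZ1} take the smooth, properly embedded self-shrinker $\breve{M}_g=\breve{M}[g+1,1]$ of genus $2g$ with three ends, asymptotic to $\breve{\Ccal}[g]$, and from Theorem~\ref{the main result of the paper} the complete, connected, $\Grp[g]$-equivariant self-expander $\breve{\Sigma}[g]$ of genus $g$, also asymptotic to $\breve{\Ccal}[g]$; then define $\{M^g_t\}_{t\in\R}$ by the three cases in the statement. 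The self-shrinker equation for $\breve{M}_g$ is exactly the statement that $t\mapsto\sqrt{-t}\,\breve{M}_g$ is a classical MCF for $t<0$, and, dually (as recalled in the Introduction), the self-expander equation for $\breve{\Sigma}[g]$ is exactly the statement that $t\mapsto\sqrt{t}\,\breve{\Sigma}[g]$ is a classical MCF for $t>0$; since every homothety $x\mapsto\lambda x$ is a diffeomorphism of $\R^3$ it preserves genus, so $M^g_t$ has genus $2g$ for all $t<0$ and genus $g$ for all $t>0$, and it is smooth and properly embedded there. By Definition~\ref{def:Pi} the link of $\breve{\Ccal}[g]$ is a union of three smoothly embedded circles, so $M^g_0=\breve{\Ccal}[g]$ is smooth and embedded away from the vertex $x=0$, which is its unique singular point. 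This already yields (i)--(iii).

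The remaining point is that the concatenation is a genuine mean curvature flow across $t=0$. Since $\breve{M}_g$ is asymptotically conical to $\breve{\Ccal}[g]$, the rescalings $\sqrt{-t}\,\breve{M}_g$ converge as $t\to0^-$ to $\breve{\Ccal}[g]$ in $C^\infty_{\mathrm{loc}}(\R^3\setminus\{0\})$ (in fact with exponential rate in $1/\abs{t}$, so the spacetime flow is smooth near every point $(x,0)$ with $x\ne0$), and, using the Euclidean area growth of self-shrinkers to rule out mass concentrating at the origin, also as Radon measures on all of $\R^3$. The analogous statements hold for $\sqrt{t}\,\breve{\Sigma}[g]\to\breve{\Ccal}[g]$ as $t\to0^+$ from the asymptotically conical property of $\breve{\Sigma}[g]$. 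Hence, assigning unit multiplicity to each slice, $t\mapsto M^g_t$ is continuous in the varifold topology at $t=0$ and its area measures have no upward jump there. Away from $t=0$ Brakke's inequality reduces to the smooth flow equation, and across $t=0$ it holds because the area is continuous, so $\{M^g_t\}_{t\in\R}$ is a unit-regular integral Brakke flow on $\R$, smooth apart from the single spacetime point $(0,0)$; it realizes the continuation of $\sqrt{-t}\,\breve{M}_g$ past the conical singularity by the expander $\breve{\Sigma}[g]$ in the framework of \cite{Chodosh2024Conical}, the non-uniqueness of Remark~\ref{fattening of the level set flow of the cone} manifesting itself only as fattening of the level-set flow.

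Finally, since $g\ge g_0\ge1$, the genus of $M^g_t$ drops from $2g$ for $t<0$ strictly to $g$ for $t>0$, but not to zero. The backward tangent flow at $(0,0)$ is the homothetically shrinking flow of $\breve{M}_g$, which has genus $2g>0$ and three ends, hence is neither a multiplicity-one shrinking cylinder nor a multiplicity-one shrinking sphere; thus the singularity at $t=0$ is neither a neck-pinch nor a shrinking sphere, which is precisely the case left open by Ilmanen's Genus Reduction Conjecture~\ref{Genus Reduction Conjecture}, and $\{M^g_t\}$ is the desired example. The only genuinely non-formal step is the second paragraph---extracting from the two asymptotically conical properties the global convergence of both rescaled flows to the common cone $\breve{\Ccal}[g]$ so that Brakke's inequality holds across $t=0$; the rest is bookkeeping about the scaling invariance of the flow equations and of the genus.
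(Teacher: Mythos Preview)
Your proposal is correct and follows the same route the paper takes: the paper gives no separate proof of this corollary, treating it as immediate from combining the shrinker $\breve{M}_g$ of \cite[Theorem~8.1]{GZ1} with the expander $\breve{\Sigma}[g]$ of Theorem~\ref{the main result of the paper} via Remark~\ref{remark of the main result}. Your second paragraph (continuity of the concatenated flow across $t=0$ as a Brakke flow, using the common asymptotic cone) makes explicit what the paper leaves implicit, but this is the natural justification and not a different approach.
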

At the singular time $t = 0$, the genus of the MCF $\{M_t^g\}$ drops from $2g$ to $g$, which is consistent with Conjecture \ref{Genus Reduction Conjecture} that a singularity does consume a certain amount of topology.

\begin{remark}[Closed embedded surfaces that fatten under MCF] \label{closed embedded surfaces fatten under MCF}
Observe that by \cite{lee2024closed}, for each $g \in \mathbb{N}$, there is a closed embedded surface $M_g$ such that the MCF starting from $M_g$ will develop a singularity modeled by the shrinker $\breve{M}_g$. By \cite[Corollary 1.2]{Chodosh2021Uniqueness} and \cite[Theorem 1.1]{Chodosh2024Conical}, at the first singular time, the MCF starting from $M_g$ is modeled on the level set flow of $\breve{\Ccal}_g$. Consequently, by Remark \ref{fattening of the level set flow of the cone}, the MCF starting from $M_g$ must fatten at its first singular time. Note that the existence of such closed smoothly embedded surfaces that fatten under MCF were pioneered by \cite{ilmanen2024fattening,ketover2024self,HMW}.
\end{remark}
Recently, Hoffman-Martin-White \cite{HMW} used mean curvature flow methods to generate self-shrinkers that has the $\Grp[g]$-symmetry, genus $2g$, and three ends, for all $g \in \mathbb{N}$ sufficiently large. We expect that their shrinkers are the same as the shrinkers $\breve{M}_g$ constructed by gluing PDE methods \cite{GZ1} for all $g \in \mathbb{N}$ sufficiently large. Therefore, we expect that the same genus drop in Corollary \ref{Genus Drop example of a MCF} and the fattening phenomena described in Remark \ref{closed embedded surfaces fatten under MCF} also apply to the shrinkers constructed by them \cite{HMW}.
\subsection*{Infinitely many self-expanders of unbounded genus coming from the same cone} Another immediate application of our main result is that we can construct a sequence of self-expanders with unbounded genera asymptotic to the same cone. 

Indeed, define
\begin{equation}\label{eq:Cepsilon}
    \Ccal_{\epsilon}:=\Ccal(\epsilon)\cup \{z=0\}.
\end{equation}
where $\Ccal(\epsilon) : = \{ x^2 + y^2 = \epsilon z^2 \}$ is the double cone with slope $1/\epsilon$. As it is obvious that for $ \epsilon > \delta_0$, $\Ccal_{\epsilon} \in \Pi[g, \delta_{0}]$ for all $g \in \mathbb{N}$, we then have the following corollary by Theorem \ref{the main result of the paper}.
\begin{corollary} [Proposition \ref{INfinitely many self-expanders}] \label{Infintely many expanders coming from the same cone}
For any $ \epsilon > \delta_0$, there is a sequence of self-expanders $\{ \Sigma[g]=\Sigma[g,\epsilon]\}_{g \in \mathbb{N}}$ such that the following hold.
\begin{enumerate}[(i)]
    \item $\Sigma[g]$ is asymptotic to $\Ccal_{\epsilon}$.
    \item $\Sigma[g]$ is connected and $g(\Sigma[g]) = g$.
    \item $\Sigma[g]$ has $\Grp[g]$-symmetry.
\end{enumerate}
\end{corollary}
\begin{remark}
    Corollary \ref{Infintely many expanders coming from the same cone} confirmed a conjecture raised by Chen \cite[Section 7]{Chen2023Rotational} that for the rotationally symmetric triple cone $\Ccal_{\epsilon}$ defined above, one cannot expect the self-expander asymptotic to $\Ccal_{\epsilon}$ to be always rotationally symmetric.
\end{remark}

By analyzing the behavior of $\{ \Sigma[g]\}_{g \in \mathbb{N}}$ as $g \to \infty$, we are able to characterize $\Sigma[g]$ when $g \in \mathbb{N}$ is sufficiently large.

\begin{proposition}[Proposition \ref{Behavior for high genus}] %\label{behavior for high genus}
For any $\epsilon > \delta_0$, the sequence of self-expanders $\{ \Sigma[g, \epsilon]\}_{g \in \mathbb{N}}$ constructed in Corollary \ref{Infintely many expanders coming from the same cone} (by passing to a subsequence) converges to the union of the hyperplane through the origin and a self-expander annulus that is asymptotic to $\Ccal(\epsilon)$. The convergence is locally smooth away from the intersecting circle and is of multiplicity $1$.
\end{proposition}

\begin{remark}
Note that by our construction of the shrinker $\breve{M}_g$ from gluing PDE methods, the asymptotic cone $\breve{C}_g$ of $\breve{M}_g$, is very close to a rotationally symmetric cone $\Ccal_{\epsilon_0}$ for some $\epsilon_0 > \delta_0$ when $g \in \mathbb{N}$ becomes sufficiently large (\cite[(8.3)]{GZ1}), but can never coincide with $\Ccal_{\epsilon_0}$ as there exists no smooth properly embedded shrinker whose asymptotic cone is rotationally symmetric by \cite[Corollary 1.4]{Wang2014Uniqueness}. 

This minor difference between $\breve{C}_g$ and the rotationally symmetric cone $\Ccal_{\epsilon_0}$ shows the delicacy of Conjecture \ref{Genus Reduction Conjecture}: if it happened that $\breve{C}_g = \Ccal_{\epsilon_0}$ for some $g \in \mathbb{N}$ and $\epsilon_0 > \delta_0$ sufficiently large, then by Corollary \ref{Infintely many expanders coming from the same cone} and \ref{Genus Drop example of a MCF}, we could construct an MCF where its genus can increase at the first singular time, which violates Conjecture \ref{Genus Reduction Conjecture}.
\end{remark}

As a result of Corollary \ref{Infintely many expanders coming from the same cone}, we can fully characterize the level set flow of $\Ccal_{\epsilon}$ for $\epsilon > \delta_0$.
\subsection*{Fattening of $\Ccal_{\epsilon}$}
As a minimal surface in the conformal metric \eqref{eq:gexp}, any $\Sigma[g]$ constructed in Corollary \ref{Infintely many expanders coming from the same cone} is unstable by \cite[Lemma 8.3]{Bernstein2023Integer}. Now by the nature of unstable minimal surfaces, we shall be able to connect $\Sigma[g]$ to some stable minimal surface using the negative gradient flow of the functional $E$. More precisely, we consider the \emph{expander mean curvature flow} (EMCF).
 \begin{equation}
     \left(\frac{\partial X_{\Sigma_t}}{\partial t}\right)^{\perp} = H_{\Sigma_t}\nu_{\Sigma_t} - \frac{X_{\Sigma_t}^{\perp}}{2},
 \end{equation}
 where $t \mapsto \Sigma_t \subset \mathbb{R}^3$ is a family of surfaces. Bernstein-Wang-Chen \cite{bernstein2024existence} have studied EMCF and developed a full machinery to produce stable self-expanders from unstable ones by using this flow:
  \begin{theorem}[{\cite[Theorem 1.3]{bernstein2024existence}}] \label{Morse line flow producing stable self-expanders from unstable ones} For $2 \leq n \leq 6$, let $\Ccal$ be a $C^3$-regular cone in $\mathbb{R}^{n + 1}$. Suppose $\Omega_- \subset \mathbb{R}^{n + 1}$ is a closed set and $\Sigma_- = \partial \Omega_-$ is a smooth self-expander asymptotic to $\Ccal$. If $\Sigma_-$ is strictly unstable, i.e., each component is unstable, then there exists a stable smooth self-expander $\Sigma_+$ asymptotic to $\Ccal$ and a strictly monotone expander Morse flow line $\Omega$ from $\Sigma_-$ to $\Sigma_+$ with $\Omega_- = cl(\bigcup_{t \in \mathbb{R}}\Omega(t))$. Moreover, if $\Omega^{\prime} \subset \mathbb{R}^{n + 1}$ is a closed set and $\partial \Omega^{\prime}$ is a smooth self-expander asymptotic to a $C^3$-regular cone $\Ccal^{\prime}$, and $\Omega^{\prime} \subset int(\Omega_-)$, then $\Omega' \subset \bigcap_{t \in \mathbb{R}}\Omega(t)$.
 \end{theorem}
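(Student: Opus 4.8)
The strategy is to realize $\Sigma_+$ as the $t\to+\infty$ limit of a weak EMCF solution that emanates from $\Sigma_-$ at $t\to-\infty$, i.e. a Morse flow line of the functional $E$. The key structural fact is that EMCF is the negative $L^2$-gradient flow of the weighted area $E$ in \eqref{eq:E}, equivalently mean curvature flow in the conformal metric $g_{Exp}$ of \eqref{eq:gexp}, with self-expanders exactly the stationary points. So the first task is a robust weak formulation with good properties: either an integral Brakke flow weighted by $e^{\abs{X}^2/4}$, or --- following Ilmanen's elliptic regularization adapted to the expander weight --- a smooth expander Morse flow line $\{\Sigma_t\}_{t\in\R}$ together with its enclosed regions $\Omega(t)$. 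Along such a flow one has that $t\mapsto E[\Sigma_t]$ is nonincreasing, a weighted monotonicity formula holds, and disjoint flows stay disjoint (a comparison principle).

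Next one uses strict instability to start the flow correctly. Since each component of $\Sigma_-$ is unstable, the lowest eigenvalue of its Jacobi operator is negative with a positive first eigenfunction; perturbing $\Sigma_-$ inward along this eigenfunction on every component produces an initial configuration whose EMCF is \emph{strictly monotone} --- the regions $\Omega(t)$ strictly nested and decreasing, with $\mathrm{cl}\big(\bigcup_{t}\Omega(t)\big)=\Omega_-$ and $\Sigma_t\to\Sigma_-$ as $t\to-\infty$. The a priori control needed to pass $t\to+\infty$ comes from the fixed cone: because $\Ccal$ is $C^3$ it is graphical near its links over simple cones, and dilations of the Ecker--Huisken self-expanders asymptotic to those simple cones \cite{Ecker1989Mean} serve as inner and outer barriers, preventing the flow from escaping to infinity or collapsing and pinning its asymptotics to $\Ccal$. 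Combined with the weighted monotonicity formula and White-type local regularity (available for $2\le n\le 6$), one obtains uniform local area and curvature estimates off a small singular set, so $\Omega(t)$ converges as $t\to+\infty$ to an open set $\Omega_+$ with $\partial\Omega_+=:\Sigma_+$ a self-expander asymptotic to $\Ccal$, and $\bigcap_t\Omega(t)=\Omega_+$.

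Then one argues $\Sigma_+$ is stable: it is the limit of a one-sided monotone gradient flow, hence a one-sided $E$-minimizer; were some component unstable one could restart the strictly monotone flow and strictly decrease $E$ below $\lim_t E[\Sigma_t]$, a contradiction. Finally, the avoidance clause is the comparison principle: if $\Omega'\subset\mathrm{int}(\Omega_-)$ and $\partial\Omega'$ is a self-expander asymptotic to a $C^3$ cone $\Ccal'$, then $\partial\Omega'$ is a static EMCF lying strictly inside $\Omega(t_0)$ for $t_0$ near $-\infty$, and the strong maximum principle (no interior first touching, with the behavior at the ends controlled using $C^3$-regularity of $\Ccal'$) forces $\partial\Omega'\subset\Omega(t)$ for all $t$, hence $\Omega'\subset\bigcap_t\Omega(t)$.

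The crux is the noncompactness at infinity. Everything depends on showing the weak flow stays genuinely asymptotic to the fixed cone $\Ccal$ uniformly in $t$ --- no mass lost or gained near infinity, and uniform decay to $\Ccal$ --- which is where the barrier constructions together with a pseudolocality and weighted-monotonicity argument do the real work, and where the hypotheses $2\le n\le 6$ (for regularity of the limit) and $\Ccal$ being $C^3$ (for the ends and the comparison principle) are genuinely used.
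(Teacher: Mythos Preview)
This theorem is not proved in the present paper; it is quoted verbatim from \cite[Theorem~1.3]{bernstein2024existence} and used as a black box to deduce the corollaries about fattening of $\Ccal_\epsilon$. There is therefore no proof in the paper to compare your proposal against.

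That said, your sketch is a reasonable outline of the strategy one would expect in the Bernstein--Chen--Wang paper: perturb $\Sigma_-$ inward using the positive first eigenfunction on each unstable component, run the expander mean curvature flow (the negative gradient flow of $E$), use barriers built from graphical Ecker--Huisken expanders to control the ends uniformly in $t$, and pass to a stable limit via monotonicity of $E$ and regularity theory in low dimensions. The avoidance clause via the comparison principle for EMCF is also the expected mechanism. If you want a genuine comparison, you would need to consult \cite{bernstein2024existence} directly; the present paper says nothing about how the result is established.
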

 
The surface $\Sigma[g]$ constructed in Corollary \ref{Infintely many expanders coming from the same cone} divides $\mathbb{R}^3$ into 2 components $\Omega_-^g,\mathbb{R}^3 \setminus int(\Omega_-^g)$ such that $\partial \Omega_-^g = \partial (\mathbb{R}^3 \backslash int(\Omega_-^g)) = \Sigma[g]$. Applying Theorem \ref{Morse line flow producing stable self-expanders from unstable ones} above to both $\Omega_-^g$ and $\mathbb{R}^3 \backslash int(\Omega_-^g)$, we obtain 2 disjoint stable self-expanders $\Sigma[g]^1, \Sigma[g]^2$ and corresponding disjoint strictly monotone expander Morse flow lines from $\Sigma[g]$ to  $\Sigma[g]^1, \Sigma[g]^2$ (see \cite[Definition 1.2]{bernstein2024existence} for the definition of strictly monotone expander Morse flow line). Moreover, $\Sigma[g]^1, \Sigma[g]^2$ are both asymptotic to $\Ccal_{\epsilon}$. By the definition of the strictly monotone Morse flow lines and the geometry of $\Sigma[g]$, each $\Sigma[g]^i$ has 2 components and their union bounds a region that contains the cone $\Ccal_{\epsilon}$ (see Figure \ref{fig:Morse flow line picture 1} and \ref{fig:Morse flow line picture 2} below). By \cite[Lemma 5]{angenent1995computed}, we can characterize the level set flow of $\Ccal_{\epsilon}$.
\begin{figure}%[H] 
\begin{tikzpicture}
    \draw[color = black, thick] (-5, 0) -- (1, 0);
    \draw[color = black, thick] (-0.5, -1.4) .. controls (-1.5, 0) .. (-0.5, 1.4);
    \draw[color = black, thick] (-3.5, -1.4) .. controls (-2.5, 0) .. (-3.5, 1.4);
    \draw[color = black, very thick] (-1.1, 0.2) -- (-0.1, 1.2);
    \draw[color = black, very thick] (-0.9, 0.1) -- (0.1, 1.1);
    \draw[color = black, very thick] (-0.6, 0.1) -- (.5, 1.2);
    \draw[color = black, very thick] (-0.3, 0.1) -- (0.8, 1.2);
    \draw[color = black, very thick] (0, 0.1) -- (1.3, 1.4);
    \draw[color = black, very thick] (0.3, 0.1) -- (1.7, 1.5);
    \draw[color = black, very thick] (.6, 0.1) -- (2.1, 1.6);
    \draw[color = black, very thick] (-2.9, 0.2) -- (-3.9, 1.2);
    \draw[color = black, very thick] (-3.1, 0.1) -- (-4.1, 1.1);
    \draw[color = black, very thick] (-3.4, 0.1) -- (-4.5, 1.2);
    \draw[color = black, very thick] (-3.7, 0.1) -- (-4.9, 1.3);
    \draw[color = black, very thick] (-4, 0.1) -- (-5.3, 1.4);
    \draw[color = black, very thick] (-4.3, 0.1) -- (-5.7, 1.5);
    \draw[color = black, very thick] (-4.6, 0.1) -- (-6, 1.6);
     \draw[color = black, very thick] (-.6, -1.2) -- (-2, -2.6);
    \draw[color = black, very thick] (-.7, -1.1) -- (-2, -2.4);
    \draw[color = black, very thick] (-.8, -1) -- (-2, -2.2);
    \draw[color = black, very thick] (-.9, -.9) -- (-2, -2);
    \draw[color = black, very thick] (-1, -0.8) -- (-2.1, -1.9);
    \draw[color = black, very thick] (-1.1, -0.7) -- (-2.2, -1.8);
    \draw[color = black, very thick] (-1.1, -0.5) -- (-2.3, -1.7);
    \draw[color = black, very thick] (-1.2, -0.3) -- (-2.4, -1.5);
    \draw[color = black, very thick] (-1.3, -0.2) -- (-2.5, -1.4);
    \draw[color = black, very thick] (-1.4, -0.1) -- (-2.8, -1.5);
    \draw[color = black, very thick] (-1.6, -0.1) -- (-2.8, -1.3);
    \draw[color = black, very thick] (-1.8, -0.1) -- (-3, -1.3);
    \draw[color = black, very thick] (-2, -0.1) -- (-3.3, -1.4);
    \draw[color = black, very thick] (-2.2, -0.1) -- (-3.2, -1.1);
    \draw[color = black, very thick] (-2.4, -0.1) -- (-2.8, -0.5);
    \node at (-2.5, -1.8) {$\Omega_-^g$};
    \node at (1.1, 0.2) {$\Omega_-^g$};
    \node at (-5.1, 0.2) {$\Omega_-^g$};
    \node at (-.5, 1.5){$\Sigma[g]$};
    \draw[->] (1.5,0) -- (2.5,0);
    \filldraw[color=black!60, fill=black!5, very thick](7.121, 0.1) .. controls (5.27717, 0.11481) .. (6.5, 1.4);
    \filldraw[color=black!60, fill=black!5, very thick](2.779, 0.1) .. controls (4.72283, 0.11481) .. (3.5, 1.4);
    \filldraw[color=black!60, fill=black!5, very thick](6.2, -1.2) .. controls (5, -0.05) .. (3.8, -1.2);
    \node at (6.5, 1.6) {$\Sigma[g]^1$};
    \node at (3.5, 1.6) {$\Sigma[g]^1$};
    \node at (6.2, -1.4) {$\Sigma[g]^1$};
\end{tikzpicture}
\caption{The expander Morse flow line from $\Sigma[g]$ to $\Sigma[g]^1$.}
\label{fig:Morse flow line picture 1}
\end{figure}
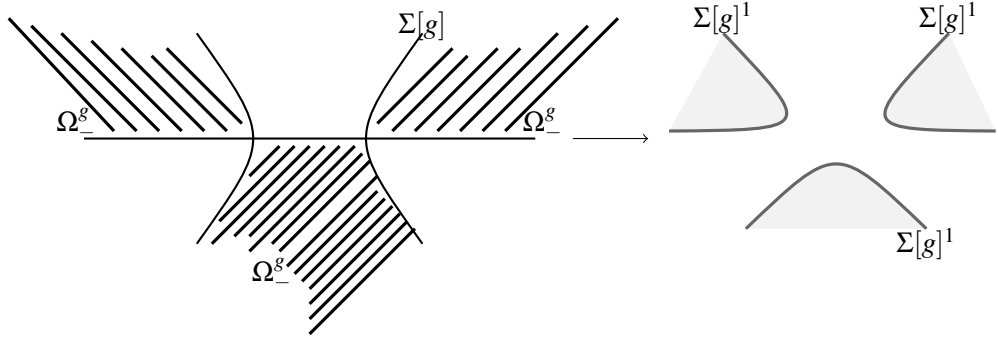
\begin{figure}%[H]
\begin{tikzpicture}
    \draw[color = black, thick] (-5, 0) -- (1, 0);
    \draw[color = black, thick] (-0.5, -1.4) .. controls (-1.5, 0) .. (-0.5, 1.4);
    \draw[color = black, thick] (-3.5, -1.4) .. controls (-2.5, 0) .. (-3.5, 1.4);
    \draw[color = black, very thick] (-1.1, -0.2) -- (-0.1, -1.2);
    \draw[color = black, very thick] (-0.9, -0.1) -- (0.1, -1.1);
    \draw[color = black, very thick] (-0.6, -0.1) -- (.5, -1.2);
    \draw[color = black, very thick] (-0.3, -0.1) -- (0.8, -1.2);
    \draw[color = black, very thick] (0, -0.1) -- (1.3, -1.4);
    \draw[color = black, very thick] (0.3, -0.1) -- (1.7, -1.5);
    \draw[color = black, very thick] (.6, -0.1) -- (2.1, -1.6);
    \draw[color = black, very thick] (-2.9, -0.2) -- (-3.9, -1.2);
    \draw[color = black, very thick] (-3.1, -0.1) -- (-4.1, -1.1);
    \draw[color = black, very thick] (-3.4, -0.1) -- (-4.5, -1.2);
    \draw[color = black, very thick] (-3.7, -0.1) -- (-4.9, -1.3);
    \draw[color = black, very thick] (-4, -0.1) -- (-5.3, -1.4);
    \draw[color = black, very thick] (-4.3, -0.1) -- (-5.7, -1.5);
    \draw[color = black, very thick] (-4.6, -0.1) -- (-6, -1.6);
    \draw[color = black, very thick] (-.6, 1.2) -- (-2, 2.6);
    \draw[color = black, very thick] (-.7, 1.1) -- (-2, 2.4);
    \draw[color = black, very thick] (-.8, 1) -- (-2, 2.2);
    \draw[color = black, very thick] (-.9, .9) -- (-2, 2);
    \draw[color = black, very thick] (-1, 0.8) -- (-2.1, 1.9);
    \draw[color = black, very thick] (-1.1, 0.7) -- (-2.2, 1.8);
    \draw[color = black, very thick] (-1.1, 0.5) -- (-2.3, 1.7);
    \draw[color = black, very thick] (-1.2, 0.3) -- (-2.4, 1.5);
    \draw[color = black, very thick] (-1.3, 0.2) -- (-2.5, 1.4);
    \draw[color = black, very thick] (-1.4, 0.1) -- (-2.8, 1.5);
    \draw[color = black, very thick] (-1.6, 0.1) -- (-2.8, 1.3);
    \draw[color = black, very thick] (-1.8, 0.1) -- (-3, 1.3);
    \draw[color = black, very thick] (-2, 0.1) -- (-3.3, 1.4);
    \draw[color = black, very thick] (-2.2, 0.1) -- (-3.2, 1.1);
    \draw[color = black, very thick] (-2.4, 0.1) -- (-2.8, 0.5);
    \node at (-3, 2.1) {$\mathbb{R}^3 \backslash int(\Omega_-^g)$};
    \node at (1.1, -1.8) {$\mathbb{R}^3 \backslash int(\Omega_-^g)$};
    \node at (-5.1, -1.8) {$\mathbb{R}^3 \backslash int(\Omega_-^g)$};
    \node at (-.5, 1.5){$\Sigma[g]$};
    \draw[->] (1.5,0) -- (2.5,0);
    \filldraw[color=black!60, fill=black!5, very thick](7.121, -0.1) .. controls (5.27717, -0.11481) .. (6.5, -1.4);
    \filldraw[color=black!60, fill=black!5, very thick](2.779, -0.1) .. controls (4.72283, -0.11481) .. (3.5, -1.4);
    \filldraw[color=black!60, fill=black!5, very thick](6.2, 1.2) .. controls (5, 0.05) .. (3.8, 1.2);
    \node at (6.5, -1.6) {$\Sigma[g]^2$};
    \node at (3.5, -1.6) {$\Sigma[g]^2$};
    \node at (6.2, 1.4) {$\Sigma[g]^2$};
\end{tikzpicture}
\caption{The expander Morse flow line from $\Sigma[g]$ to $\Sigma[g]^2$.}
\label{fig:Morse flow line picture 2}
\end{figure}
\begin{corollary}
For any $\epsilon > \delta_0$, let $E^1, \dots, E^4$ be the 4 connected components of $\mathbb{R}^3 \backslash \Ccal_{\epsilon}$ and  $\{\Gamma_t \}_{t \geq 0}$ be the level set flow of $\Ccal_{\epsilon}$. Then $\Gamma_t$ also divides $\mathbb{R}^3$ into 4 connected components $E_t^1, \dots, E_t^4$, where $\{ \mathbb{R}^3 \backslash E_t^i\}_{t \geq 0}$ is a level set flow for each $i \in \{1, \dots, 4\}$. Moreover, $\{\Gamma_t \}_{t \geq 0}$ satisfies the following.
\begin{enumerate}[(i)]
\item For all $g \in \mathbb{N}$, $\sqrt{t} \cdot \Sigma[g] \subset int(\Gamma_t), \sqrt{t} \cdot (\Sigma[g]^1 \cup \Sigma[g]^2) \subset \Gamma_t$ for $t > 0$.
\item $\partial \Gamma_1$ is a smooth stable self-expander.
\end{enumerate}  
\end{corollary}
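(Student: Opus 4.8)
The plan is to reduce everything to the time-slice $\Gamma_1$ by self-similarity, to trap $\Gamma_1$ between the ``outermost'' self-expanders of $\Ccal_\epsilon$ using the avoidance principle together with \cite[Lemma~5]{angenent1995computed}, and then to read off the four-component decomposition, the inclusions in (i), and the regularity in (ii).

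\emph{Self-similarity and the easy inclusions.} Since $\Ccal_\epsilon$ is dilation-invariant and the level set flow is canonical under parabolic rescalings, $\Gamma_t=\sqrt t\,\Gamma_1$ for $t>0$ with $\Gamma_0=\Ccal_\epsilon$, and each connected component of $\mathbb R^3\setminus\Gamma_t$ is $\sqrt t$ times one of $\mathbb R^3\setminus\Gamma_1$; so one works at $t=1$ and sets $E_t^i:=\sqrt t\,E_1^i$, $\partial\Gamma_t=\sqrt t\,\partial\Gamma_1$. Each of $t\mapsto\sqrt t\,\Sigma[g]$, $t\mapsto\sqrt t\,\Sigma[g]^1$, $t\mapsto\sqrt t\,\Sigma[g]^2$ and $t\mapsto\sqrt t\,(\Lambda_1\cup\Lambda_2\cup\Lambda_3)$ is a smooth mean curvature flow with $t\to0^+$ limit $\Ccal_\epsilon$, the corresponding surface being a smooth self-expander asymptotic to $\Ccal_\epsilon$ (Corollary~\ref{Infintely many expanders coming from the same cone} and the paragraph preceding the present corollary); as the level set flow of $\Ccal_\epsilon$ is the maximal weak set flow issuing from it, all of these lie in $\{\Gamma_t\}$. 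In particular $\sqrt t\,(\Sigma[g]^1\cup\Sigma[g]^2)\subset\Gamma_t$ for every $g$ and $t>0$, half of (i), and $\Gamma_t$ has nonempty interior by Remark~\ref{fattening of the level set flow of the cone}.

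\emph{Trapping $\Gamma_1$.} Order the components of $\mathbb R^3\setminus\Ccal_\epsilon$ so that $E^1,E^4$ are the outer regions, with $\partial E^1,\partial E^4$ the single graphical cone sheets $\Ccal_1,\Ccal_3$, and $E^2,E^3$ the inner regions, each $\partial E^i$ being two sheets meeting only at the origin. Because $\Ccal_\epsilon$ is rotationally symmetric, the self-expanders asymptotic to $\Ccal_\epsilon$ and to its sub-cones are linearly ordered on each side, so \cite[Lemma~5]{angenent1995computed} provides, for each $i$, a sharp outermost barrier $\Lambda^i$ for weak flows of $\Ccal_\epsilon$ on the $E^i$-side; $\Lambda^1,\Lambda^4$ coincide asymptotically with the Ecker--Huisken graphical expanders $\Lambda_1,\Lambda_3$ of Remark~\ref{fattening of the level set flow of the cone}, and $\Lambda^2,\Lambda^3$ with the $E$-minimizing self-expanders on the inner regions (Ilmanen \cite{ilmanen1998lectures}, Ding \cite{ding2020minimal}). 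One then checks that the component $E_1^i$ of $\mathbb R^3\setminus\Gamma_1$ associated with $E^i$ is precisely the part of $E^i$ strictly beyond $\Lambda^i$, that these are its only components (so $\mathbb R^3\setminus\Gamma_1=E_1^1\sqcup\dots\sqcup E_1^4$ has four connected pieces and $\Gamma_1$ is the set trapped between the barriers), and that $\{\mathbb R^3\setminus E_t^i\}_{t\ge0}=\{\sqrt t\,(\mathbb R^3\setminus E_1^i)\}_{t\ge0}$ equals the level set flow of $\partial E^i$, its moving boundary $\sqrt t\,\Lambda^i$ being a genuine mean curvature flow and sharp by the outermost property. This is the four-component statement of the corollary.

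\emph{Conclusions.} By the trapping step $\partial\Gamma_1=\bigcup_i\Lambda^i$, and each $\Lambda^i$ is an $E$-minimizing self-expander on its side, hence stable, and --- as $n=2$ --- smooth, by the smoothness of $E$-minimizing self-expanders in $\mathbb R^3$ together with the regularity theory for outermost expander flows of \cite{bernstein2024existence} (valid since $2\le n\le6$). Thus $\partial\Gamma_1$, and after rescaling $\partial\Gamma_t$, is a smooth stable self-expander, which is (ii). For the remaining half of (i), $\sqrt t\,\Sigma[g]\subset\operatorname{int}(\Gamma_t)$: we already have $\Sigma[g]\subset\Gamma_1$, so by self-similarity it suffices to show $\Sigma[g]\cap\partial\Gamma_1=\varnothing$. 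If $p\in\Sigma[g]\cap\partial\Gamma_1$, then near $p$ the set $\Gamma_1$ lies on one closed side of the smooth sheet $\Lambda^i$ of $\partial\Gamma_1$ through $p$; hence the signed distance to $\Lambda^i$, restricted to the boundaryless complete surface $\Sigma[g]\subset\Gamma_1$, attains a local minimum at $p$, forcing $\Sigma[g]$ to be tangent to $\Lambda^i$ at $p$ and to lie on one side of it, so the strong maximum principle for self-expanders --- minimal surfaces for $g_{Exp}$ --- gives $\Sigma[g]=\Lambda^i$; this contradicts that $\Sigma[g]$ is connected and unstable \cite[Lemma~8.3]{Bernstein2023Integer} while $\Lambda^i$ is stable (and, in any case, that $\Sigma[g]$ is asymptotic to the full cone $\Ccal_\epsilon$ whereas $\Lambda^i$ is asymptotic to a proper sub-cone). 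Hence $\Sigma[g]\subset\operatorname{int}(\Gamma_1)$, and rescaling completes (i).

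\emph{Main difficulty.} The crux is the trapping step: checking that \cite[Lemma~5]{angenent1995computed} really applies to $\Ccal_\epsilon$ with the correct ordering of the relevant self-expanders, that the outermost barriers $\Lambda^i$ exist and cut $\mathbb R^3\setminus\Gamma_1$ into exactly four open connected regions, and that the complement of each evolves as the level set flow of $\partial E^i$. The subsidiary point is the regularity and stability of the inner barriers $\Lambda^2,\Lambda^3$ --- their identification with smooth stable $E$-minimizing self-expanders --- which rests on the smoothness theory for $E$-minimizers in $\mathbb R^3$ and on \cite{bernstein2024existence}.
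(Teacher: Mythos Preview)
Your overall strategy --- reduce by self-similarity to $t=1$, use \cite[Lemma~5]{angenent1995computed} to identify $\partial\Gamma_1$ with the outermost self-expanders on each of the four sides, and read off (i) and (ii) --- is exactly the route the paper indicates (the corollary has no displayed proof; the argument is the paragraph preceding it together with the words ``By \cite[Lemma~5]{angenent1995computed}, we can characterize the level set flow of $\Ccal_{\epsilon}$'').

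Two points deserve comment. First, for the inclusion $\Sigma[g]\subset\operatorname{int}(\Gamma_1)$ in (i) you argue by the strong maximum principle against the smooth boundary $\partial\Gamma_1$; the paper instead gets this from the \emph{strict monotonicity} of the expander Morse flow lines of Theorem~\ref{Morse line flow producing stable self-expanders from unstable ones}: the one-parameter family $\{\partial\Omega(t)\}_{t\in\mathbb R}$ consists of smooth self-expanders asymptotic to $\Ccal_\epsilon$, hence all lie in $\Gamma_1$, and sweeps out an open neighborhood of $\Sigma[g]$ on each side, landing on $\Sigma[g]^1,\Sigma[g]^2$. Your argument is valid but logically depends on having (ii) first; the paper's order is the reverse and does not need smoothness of $\partial\Gamma_1$ to place $\Sigma[g]$ in the interior.

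Second, your identification of the inner barriers $\Lambda^2,\Lambda^3$ with the $E$-\emph{minimizing} expanders of \cite{ilmanen1998lectures,ding2020minimal} is not justified and is not how the paper obtains stability in (ii). The boundary of the fattened region is the \emph{outermost} expander on each side; outermost need not coincide with $E$-minimizing. Stability in the paper's framework comes either from the rotationally symmetric ODE description in \cite{angenent1995computed} or, more robustly, from the fact that the Morse flow line of Theorem~\ref{Morse line flow producing stable self-expanders from unstable ones} terminates at a \emph{stable} expander --- and the ``moreover'' clause there forces any further expander on that side to lie weakly inside, so the terminal one is outermost. You should replace the $E$-minimizing claim with this outermost-implies-stable argument (or simply invoke Theorem~\ref{Morse line flow producing stable self-expanders from unstable ones} as the paper does).
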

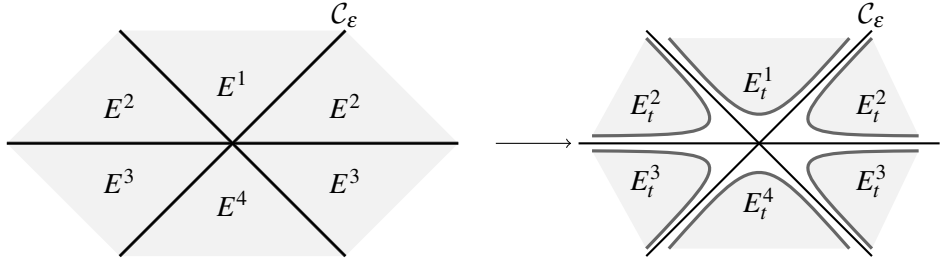
\begin{figure}%[H]
    \centering
    \begin{tikzpicture}
    \filldraw[color=black!60, fill=black!5, very thick](-5, 0) --(-2,0) -- (-3.5, 1.5);
    \filldraw[color=black!60, fill=black!5, very thick](1, 0) --(-2,0) -- (-.5, 1.5);
    \filldraw[color=black!60, fill=black!5, very thick](-3.5, 1.5) --(-2,0) -- (-.5, 1.5);
    \filldraw[color=black!60, fill=black!5, very thick](-3.5, -1.5) --(-2,0) -- (-.5, -1.5);
    \filldraw[color=black!60, fill=black!5, very thick](-5, 0) --(-2,0) -- (-3.5, -1.5);
    \filldraw[color=black!60, fill=black!5, very thick](1, 0) --(-2,0) -- (-.5, -1.5);
    \draw[color = black, thick] (-5, 0) -- (1, 0);
    \draw[color = black, thick] (-3.5, -1.5) -- (-0.5, 1.5);
    \draw[color = black, thick] (-3.5, 1.5) -- (-0.5, -1.5);
    \draw[->] (1.5,0) -- (2.5,0);
    \draw[color = black, thick] (2.6, 0) -- (7.4, 0);
    \draw[color = black, thick] (3.5, -1.5) -- (6.5, 1.5);
    \draw[color = black, thick] (3.5, 1.5) -- (6.5, -1.5);
    \filldraw[color=black!60, fill=black!5, very thick](7.121, -0.1) .. controls (5.27717, -0.11481) .. (6.5, -1.4);
    \filldraw[color=black!60, fill=black!5, very thick](2.779, -0.1) .. controls (4.72283, -0.11481) .. (3.5, -1.4);
    \filldraw[color=black!60, fill=black!5, very thick](6.2, 1.4) .. controls (5, 0.05) .. (3.8, 1.4);
    \filldraw[color=black!60, fill=black!5, very thick](7.121, 0.1) .. controls (5.27717, 0.11481) .. (6.5, 1.4);
    \filldraw[color=black!60, fill=black!5, very thick](2.779, 0.1) .. controls (4.72283, 0.11481) .. (3.5, 1.4);
    \filldraw[color=black!60, fill=black!5, very thick](6.2, -1.4) .. controls (5, -0.05) .. (3.8, -1.4);
    \node at (-.5, 1.7) {$\Ccal_{\epsilon}$};
    \node at (-2, 0.8) {$E^1$};
    \node at (-0.5, 0.5) {$E^2$};
    \node at (-3.5, 0.5) {$E^2$};
    \node at (-0.5, -0.5) {$E^3$};
    \node at (-3.5, -0.5) {$E^3$};
    \node at (-2, -0.8) {$E^4$};
    \node at (6.5, 1.7) {$\Ccal_{\epsilon}$};
    \node at (5, 0.8) {$E_t^1$};
    \node at (6.5, 0.5) {$E_t^2$};
    \node at (3.5, 0.5) {$E_t^2$};
    \node at (6.5, -0.5) {$E_t^3$};
    \node at (3.5, -0.5) {$E_t^3$};
    \node at (5, -0.8) {$E_t^4$};
    \end{tikzpicture}
    \caption{Fattening of $\Gamma_t$.}
    \label{fig:enter-label}
\end{figure}

\subsection*{Idea of the proof of Theorem \ref{the main result of the paper}}
 Our proof of Theorem \ref{the main result of the paper} is motivated by Ilmanen \cite[p.13-14]{ilmanen1998lectures} and Ding \cite[Theorem 6.3]{ding2020minimal}. We construct a self-expander with prescribed topology and boundary in each closed ball, and then extend it to a complete self-expander with the same topology by passing to a limit. 
 
 The main difficulty is that we have to carefully control the topology of the self-expanders which we constructed in each closed ball. Instead of considering the $E$-minimizing self-expanding current with prescribed boundary as \cite[p.13-14]{ilmanen1998lectures} and \cite[Theorem 6.3]{ding2020minimal}, we appeal to a result of Meeks-Simon-Yau \cite[Theorem 1]{meeks1982embedded} to minimize area among the isotopy class of the Costa-Hoffman-Meeks surface \cite{Hoffman1990Embedded}, which has $\Grp[g]$-symmetry and genus $g$. To prevent the minimizer from degenerating into 3 disks, we use a barrier argument to make sure if such degeneration occurs, the 3 disks are sufficiently close to each other. Thus, they will have large area accumulating inside the unit ball, by performing a small surgery and desingularization in the unit ball, we can not only recover the topology (connectedness and positive genus), but also decrease the area by a fixed amount. Now we can redo the minimization. After finitely many such iterations, we will arrive at the connected positive genus self-expander which we desire in the closed ball. 

 As the boundary of each expander we construct in the closed ball is prescribed by the cone, the barrier argument by \cite[Lemma 8.2]{Bernstein2023Integer} (also see \cite[Theorem 6.3]{ding2020minimal}) will force the limit self-expander to be asymptotic to the same cone.

\subsection*{Acknowledgments} GS wants to thank his advisor Daniel Ketover for suggesting this problem and his constant encouragement and support. Both authors are very grateful to Jacob Bernstein for several useful discussions and his interest in this work. The authors also would like to thank the interests and valuable comments of Ao Sun and Otis Chodosh.

\section{Preliminaries and notation}
\subsection*{Preliminaries} \label{Preliminaries}
In this subsection, we recall some basic properties of self-expanders used throughout the paper. Readers who are familiar with self-expanders may skip this section. 

On any self-expander hypersurface $\Sigma \subset \mathbb{R}^{3}$ with the embedding $X = X_{\Sigma}$, 
we have
\[
\Delta_{\Sigma} X = H_{\Sigma}\nu_{\Sigma}=\frac{1}{2}X^{\perp},
\]
and thus
\begin{equation}\label{eq:Laplacian}
    \Delta_{\Sigma} |X|^2 = 2  X \cdot \Delta_{\Sigma} X + 2 |\nabla_{\Sigma} X|^2 = 2  X \cdot \Delta_{\Sigma} X + 4 = |X^{\perp}|^2 + 4 = 4|H_{\Sigma}|^2 + 4.
\end{equation}
Moreover, self-expanders, as minimal surfaces in the conformal metric \eqref{eq:gexp}, possesses the \emph{monotonicity formula} (\cite[(2.7)]{ding2020minimal}):
\begin{equation} \label{Monotonicity Formula}
    \frac{d}{d\rho}\frac{\mathcal{H}^2(\Sigma \cap B_{\rho})}{\rho^{2}} = \frac{d}{d\rho} \int_{\Sigma \cap B_{\rho}} \frac{|X^{\perp}|^2}{|X|^{4}} \, d\mathcal{H}^2 + \frac{1}{2\rho^{3}} \int_{\Sigma \cap B_{\rho}} |X^{\perp}|^2 \, d\mathcal{H}^2 \geq 0
\end{equation}
for any $\rho > 0$, where $B_{\rho}$ denotes the ball in $\mathbb{R}^{3}$ with radius $\rho$ and centered at the origin. A self-expander $\Sigma$ is called \emph{stable} if $\Sigma$ is a stable minimal surface in the expander space \eqref{eq:gexp}, i.e., the second variation of $\Sigma$ with respect to the functional $E[\Sigma]$ in \eqref{eq:E} is nonnegative.

Let us recall some classical definitions of currents \cite[Chapter 6, 7]{simon1984lectures}.
Given any open set $U \subset \mathbb{R}^3$, let $\Omega^k_{c}(U)$ be the set of all smooth $k$-forms having compact support in $U$. We use $\mathcal{D}_k(U)$ to denote the set of all $k$-currents on $U$, which are continuous linear functionals on $\Omega^k_{c}(U)$.  For each open set $W \subset U$ and $T\in\mathcal{D}_k(U)$, one defines:
\[
\mathbf{M}_W(T) = \sup\{ T(\omega):|\omega| \leq 1, \omega \in \Omega^k_{c}(W)\}.
\]

If $\mathbf{M}_W(T) < \infty$ for any $W \Subset U$, then by Riesz representation theorem, there is a Radon measure $\mu_T$ on $U$ and $\mu_T$-measurable $k$-vector-valued function $\vec{T}$ on $U$ with $|\vec{T}| = 1$ $\mu_T$-a.e., such that $\forall \omega\in \Omega^k_{c}(U)$,
\[
T(\omega) = \int \langle\omega(x), \vec{T}(x) \rangle \, d\mu_T(x).
\]
 
We call $T$ an $E$-minimizing self-expanding current in $U \subset \mathbb{R}^3$, if $T \in \mathcal{D}_2(U)$ is an integer multiplicity locally rectifiable current in $U$, and $\mathbf{E}_W(T) \leq \mathbf{E}_W(T^{\prime})$ whenever $W \Subset U$, $\partial T = \partial T^{\prime}$, where $\mathbf{E}_W(T)$ is given by
\begin{equation}
    \mathbf{E}_W(T) : =\sup\{ \int \langle\omega(x), \vec{T}(x) \rangle e^{\frac{|x|^2}{4}}\, d\mu_T(x):\omega \in \mathcal{D}_2(W), |\omega| \leq 1\}.
\end{equation}

By standard geometric measure theory \cite[Lemma 34.1]{simon1984lectures}, for any integer multiplicity current $S \in \mathcal{D}_1(U)$ with spt$S$ compact and $\partial S = \emptyset$, there exists an $E$-minimizing self-expanding current $T$ in $U$. Moreover, if $S$ is a multiplicity 1 smooth simple closed curve, $T$ is given by a multiplicity 1 smoothly embedded self-expander (see\cite[Theorem 37.7]{simon1984lectures} or \cite[Theorem 27.3]{maggi2012sets}).

Now we give a small but useful lemma for self-expanders that we will use throughout the paper.

\begin{lemma} [{\cite{Ecker1989Mean}} {(cf. \cite[p. 2]{Bernstein2022Topological})}] \label{graphical self-expander}
    Let $\Ccal \subset \mathbb{R}^{n + 1}$ be any smooth embedded graphical hypercone. Then there is a unique self-expander $\Sigma$ asymptotic to $\Ccal$. Moreover, $\Sigma$ is graphical. If $\Ccal$ is mean-convex, then $\Sigma$ lies above $\Ccal$.
\end{lemma}

\begin{proof}
The existence of a graphical self-expander $\Sigma$ asymptotic to $\Ccal$ follows from \cite[Theorem 5.1]{Ecker1989Mean}. If $\Sigma^{\prime}$ is any self-expander asymptotic to $\Ccal$, then both $\{\sqrt{t} \cdot \Sigma\}_{t \geq 0}$ and $\{\sqrt{t} \cdot \Sigma^{\prime}\}_{t \geq 0}$ are solutions to the MCF starting from $\Ccal$. Since $\Ccal$ is a smooth graph, it follows from the uniqueness for uniformly parabolic equations on a short time interval that  $\Sigma = \Sigma^{\prime}$. 

If $\Ccal$ is mean-convex, from the strong maximum principle, the MCF $\{\sqrt{t} \cdot \Sigma\}_{t \geq 0}$ lies above $\Ccal$ for all times. Thus, $\Sigma$ lies above $\Ccal$. 
\end{proof}

\subsection*{Notation}
Throughout the paper, for notational convenience, we let $\mathcal{L}[\Ccal]$ denote the link of a cone $\Ccal \subset \mathbb{R}^3$. We say $\Ccal$ is a smooth embedded cone if $\mathcal{L}[\Ccal]$ is a regular cuve on $\mathbb{S}^2$. For any smooth embedded cone $\Ccal[g] \in \Pi[g, \delta]$, we have
 \begin{equation*}
     \Ccal[g]=\cup_{i=1}^3\Ccal_i[g],
 \end{equation*}
 where each $\Ccal_i[g]$ is a simple cone over $\R^2$. We then write $\mathcal{L}_i$ for the link of $\Ccal_i[g]$. We also abbreviate by using $g(\Sigma)$ for the genus of a surface $\Sigma$. ,  
 
Throughout the paper, $\mathcal{H}^2$ stands for the standard 2-dimensional Hausdorff measure in the Euclidean space and $\mathcal{H}^2_w$ stands for the 2-dimensional Hausdorff measure in the space $(\mathbb{R}^3,g_{Exp})$, i.e., for any set $A \subset \mathbb{R}^3$,
\[
\mathcal{H}^2_w(A) := \int_A e^{\frac{|X|^2}{4}} \, d\mathcal{H}^2.
\]
For any $p \in \mathbb{R}^3$ and finite group $G$ acting on $\mathbb{R}^3$, we define the \emph{isotropy subgroup (or stabilizer subgroup) $G_{p}\subset G$ with respect to $p$} by
\[
G_{p} := \{ g \in G : gp = p \}.
\]
We define the \emph{singular locus} $S=S(G)$ of the group action $G$ to be:
\[
S(G): = \{ p \in \mathbb{R}^3: G_{p} \neq \{ e\} \}.
\]
And we say a surface $\Sigma$ is $G$-equivariant if $\varphi(\Sigma) = \Sigma$ for all $\varphi \in G$.

For any $g \in \mathbb{N}$, $\Grp[g]$ is the \emph{$(g+1)$-prismatic} finite group defined by (cf. \cite[(2.25)-(2.26) and Definition 2.28]{GZ1})
\begin{equation}\label{eq:G}
    \Grp[g]:=\{\sigma_{v}[0], \sigma_{v}\left[\frac{\pi}{g+ 1}\right], \UUU\left[\frac{\pi}{2(g+ 1) }\right]\},
\end{equation}
where for $c\in\R$, $\sigma_v[c]$ is the reflection with respect to the vertical planes (we use cylindrical coordinates in $\mathbb{R}^3$)
\begin{equation}
    P_c := \{(r, c, z):r, z \in \mathbb{R}\}
\end{equation}
i.e.,
\begin{equation} \label{symmetry 1}
    \sigma_v[c](x, y, z) := (x \cos 2c + y \sin 2c, x \sin 2c - y \cos 2c, z));
\end{equation}
and $\UUU[c]$ is the reflection with respect to the horizontal lines 
\begin{equation*}
    l_c :=\{(r, c, 0): r \in \mathbb{R}\},
\end{equation*}
i.e.,
\begin{equation} \label{symmetry 2}
   \UUU[c](x, y, z) := (x \cos 2c + y \sin 2c, x \sin 2c - y \cos 2c, -z).
\end{equation}
Thus $\Grp[g]  \cong D_{2(g + 1)}$ where $D_{2(g + 1)}$ is the abstract dihedral group of order $2(g+1)$. Apparently it contains the rotation $\CCC\left[\frac{2\pi}{g+ 1}\right]$, where 
\begin{equation} \label{symmetry 3}
   \CCC[c](x, y, z) := (x \cos c - y \sin c, x \sin c + y \cos c, z).
\end{equation}
We also define the cyclic subgroup $\mathbf{C}_{g+1}$ of $\Grp[g]$ by
\begin{equation}\label{eq:cyclic}
    \mathbf{C}_{g+1}:=\{\CCC\left[\frac{2\pi}{g+ 1 }\right]\};
\end{equation}
and define the dihedral subgroup $\mathbf{D}_{g+1}$ of $\Grp[g]$ by
\begin{equation}\label{eq:dihedral}
    \mathbf{D}_{g+1}:=\{\CCC\left[\frac{2\pi}{g+ 1 }\right],\UUU\left[\frac{\pi}{2(g+ 1) }\right]\}.
\end{equation}

We also use $l_z$ to denote the $z$-axis.

We have the following simple lemma.
\begin{lemma}\label{lem:S}
    The singular locus $S[g]=S(\Grp[g])$ of $\Grp[g]$ is given by:
\[
 S [g]= S_0[g] \cup S_1 [g]\cup S_2[g], 
 \]
where $S_i$ for $i=0,1,2$ is defined by the following.
 \begin{enumerate}[(i)]
     \item $S_0[g]$ is the origin (Hausdorff dimension 0).
     \item $S_1[g] : =l_z \cup \bigcup_{i = 1}^{g + 1} l_{\frac{(2i-1)\pi}{2(g+1)}}$ is the union of the $z$-axis and $(g + 1)$ horizontal axes (Hausdorff dimension 1).
     \item $S_2[g] := \bigcup_{j = 0}^{g} P_{\frac{j \pi}{g + 1}}$ is the union of $(g + 1)$ vertical planes (Hausdorff dimension 2). 
\end{enumerate}
\end{lemma}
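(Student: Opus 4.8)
The plan is to enumerate the $4(g+1)$ elements of $\Grp[g]$ explicitly, read off the fixed-point set of each non-identity element, and take the union. The key starting observation is that every generator --- hence every element --- of $\Grp[g]$ is linear and carries the $z$-axis to itself, so each element has the block form $(A,\varepsilon)$ with $A\in O(2)$ acting on the $(x,y)$-plane and $\varepsilon\in\{\pm1\}$ the action on the $z$-coordinate. First I would record the product rules coming directly from \eqref{symmetry 1}--\eqref{symmetry 2}: $\sigma_v[a]\circ\sigma_v[b]$ is the rotation about $l_z$ by angle $2(a-b)$; the reflection $R_h\colon(x,y,z)\mapsto(x,y,-z)$ commutes with every rotation about $l_z$ and with every $\sigma_v[c]$; and $\UUU[c]=\sigma_v[c]\circ R_h$. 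Reducing an arbitrary word in $\sigma_v[0]$, $\sigma_v[\tfrac{\pi}{g+1}]$, $\UUU[\tfrac{\pi}{2(g+1)}]$ with these rules yields four families:
\begin{enumerate}[(a)]
\item the rotations about $l_z$ by $\tfrac{2\pi k}{g+1}$, $k=0,1,\dots,g$;
\item the vertical reflections $\sigma_v[\tfrac{j\pi}{g+1}]$, $j=0,1,\dots,g$;
\item the half-turns $\UUU[\tfrac{\pi}{2(g+1)}]\circ(\text{rotation about }l_z\text{ by }\tfrac{2\pi k}{g+1})$, $k=0,\dots,g$;
\item the rotary reflections $\UUU[\tfrac{\pi}{2(g+1)}]\circ\sigma_v[\tfrac{j\pi}{g+1}]$, $j=0,\dots,g$.
\end{enumerate}
The four families consist of $g+1$ pairwise distinct elements each (members of different families differ in $\varepsilon$ or in $\det A$), so the count is $4(g+1)=|\Grp[g]|$ and the list is exhaustive.

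Next I would compute fixed sets. Each non-identity element of (a) fixes exactly $l_z$; each element of (b) fixes exactly the vertical plane $P_{j\pi/(g+1)}$, and as $j$ runs over $0,\dots,g$ these exhaust $S_2[g]$. For (c), the product rules give $\UUU[\tfrac{\pi}{2(g+1)}]\circ(\text{rotation about }l_z\text{ by }\tfrac{2\pi k}{g+1})=\UUU[\tfrac{(1-2k)\pi}{2(g+1)}]$, whose fixed set is the horizontal line $l_{(1-2k)\pi/(2(g+1))}$; reducing the angle modulo $\pi$ (axes are unoriented), the $g+1$ choices $k=0,\dots,g$ produce precisely the lines $l_{(2i-1)\pi/(2(g+1))}$, $i=1,\dots,g+1$. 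For (d), $\UUU[\tfrac{\pi}{2(g+1)}]\circ\sigma_v[\tfrac{j\pi}{g+1}]$ is the rotary reflection $(x,y,z)\mapsto(\mathrm{rot}_{(1-2j)\pi/(g+1)}(x,y),-z)$; since $1-2j$ is odd it is never a multiple of $2(g+1)$, so the rotation angle is nonzero modulo $2\pi$ and the only fixed point is the origin. Taking the union,
\[
S[g]=l_z\cup\bigcup_{i=1}^{g+1}l_{\frac{(2i-1)\pi}{2(g+1)}}\cup\bigcup_{j=0}^{g}P_{\frac{j\pi}{g+1}}\cup\{0\}=S_1[g]\cup S_2[g],
\]
with $S_0[g]=\{0\}\subset S_1[g]$ kept as the (redundant, lowest-dimensional) stratum; the Hausdorff-dimension assertions are then immediate, a point having dimension $0$, a line dimension $1$, and a plane dimension $2$.

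Since this is a direct computation, there is no real obstacle; the one place demanding care is the bookkeeping for the $z$-reversing coset in families (c) and (d). Because the chosen generator is $\UUU$ at the half-step angle $\tfrac{\pi}{2(g+1)}$ rather than at $0$, the half-turn axes come out at the interleaved angles $\tfrac{(2i-1)\pi}{2(g+1)}$, lying strictly between the reflection planes $P_{j\pi/(g+1)}$; I would double-check this --- and the fact that the rotary reflections in (d) contribute nothing past the origin --- by carrying out the relevant compositions explicitly from \eqref{symmetry 1}--\eqref{symmetry 2}, remembering that a vertical reflection plane (and a horizontal axis) is determined only modulo $\pi$ while the rotation angle of a rotary reflection is determined modulo $2\pi$.
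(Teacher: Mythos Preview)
Your proof is correct. The paper itself offers no proof for this lemma---it is stated as a ``simple lemma'' and left to the reader---so there is nothing to compare against beyond noting that your explicit enumeration of the $4(g+1)$ elements into the four families (rotations, vertical reflections, half-turns, rotary reflections) and the computation of their fixed-point sets is exactly the routine verification the authors had in mind. Your bookkeeping in families (c) and (d) is accurate: the half-step generator $\UUU[\tfrac{\pi}{2(g+1)}]$ does force the horizontal axes to sit at the interleaved angles $\tfrac{(2i-1)\pi}{2(g+1)}$, and the parity argument showing the rotary reflections have nonzero rotation angle (hence fix only the origin) is the right way to see that $R_h$ itself is not in $\Grp[g]$ and that family (d) contributes nothing new to the singular locus.
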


%We define $\Ccal[g]$ to be the asymptotic cone of the shrinker $\breve{M}[g+1,1]$ which we constructed in the previous paper with genus $2g$ and three ends. For $i\in \{1, 2, 3\}$, we use $\mathcal{L}_i[g]$ to denote the three components of the link $\mathcal{L}[\Ccal[g]]$ of $\Ccal[g]$ and $\Ccal_i[g]$ be the single cone with link $\mathcal{L}_i[g]$ for $i \in \{1, 2, 3\}$.

We use the notation $P_{xy}$ to denote the $xy$-plane in $\R^3$.
\begin{equation*}
    P_{xy} := \{ z = 0\}.
\end{equation*}
For $R>0$, we use $B_R$ to denote the ball in $\R^3$ centered at origin and with radius $R$.
And for $\delta>0$, we use $\Ccal(\delta)$ to denote the rotationally symmetric double cone  
\begin{equation}\label{eq:Cdelta}
    \Ccal(\delta): = \{(x,y,z):x^2 + y^2 = \delta z^2 \}.
\end{equation}
\section{Construction of barriers}
In this section, we will construct the barriers, which consist of  2 graphical complete self-expanders.
\begin{lemma}
There exist unique complete graphical self-expanders $\Lambda_1(\delta), \Lambda_2(\delta)$ asymptotic to $\Ccal(\delta) \cap \{z > 0\}, \Ccal(\delta) \cap \{z < 0\}$ respectively (recall \eqref{eq:Cdelta}). Moreover, the following hold.
\begin{enumerate}[(i)]
    \item $\Lambda_1(\delta)$ lies above $\Ccal(\delta)$. %due to the mean-convexity of $\Ccal(\delta)$
    \item $\Lambda_1(\delta)$ is rotationally symmetric with respect to $l_z$.
    \item When $\delta \to \infty$, $\Lambda_1(\delta)$ converges to the $xy$-plane $P_{xy} $ smoothly with multiplicity 1 as a graph.
    \item $\Lambda_2(\delta)$ is isometric to $\Lambda_1(\delta)$ via reflection with respect to the $xy$-plane $P_{xy} $.
\end{enumerate}
\end{lemma}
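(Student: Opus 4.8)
The plan is to produce $\Lambda_1(\delta)$ as a rotationally symmetric self-expander over the positive cone $\Ccal(\delta)\cap\{z>0\}$ by reducing to an ODE, and then read off (i)--(iii) from the structure of that ODE; part (iv) is then a definition via reflection. Concretely, writing a rotationally symmetric surface as a graph $z = u(r)$ over an annular or full region of the $xy$-plane (or, near the cone, as $r = w(z)$), the self-expander equation $H_\Sigma = \tfrac12 X_\Sigma\cdot\nu_\Sigma$ becomes a second-order ODE for a single function of one variable. One then seeks the solution whose graph is asymptotic at infinity to the ray $z = r/\sqrt{\delta}$, i.e. to $\Ccal(\delta)\cap\{z>0\}$. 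Existence and uniqueness of a complete graphical self-expander asymptotic to a given smooth graphical cone is exactly Ecker--Huisken \cite{Ecker1989Mean} (quoted already in the introduction), which applies here since $\Ccal(\delta)\cap\{z>0\}$ is a smooth graphical cone away from the origin; alternatively one invokes the existence/uniqueness results of Ding \cite{ding2020minimal} or Bernstein--Wang. This gives the existence and uniqueness assertion, and by uniqueness the solution inherits the rotational symmetry of the cone, proving (ii).

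For (i), I would run a barrier/maximum-principle argument: the cone $\Ccal(\delta)$ itself is a subsolution (it is a minimal cone in the Euclidean metric but has $X\cdot\nu = 0$, so $H - \tfrac12 X\cdot\nu = 0 - 0$ along it away from $0$ — more precisely one checks the sign of $H_{\Ccal(\delta)} - \tfrac12 X_{\Ccal(\delta)}\cdot\nu$ with the outward-from-axis normal and finds the cone acts as a barrier from below), so a sweepout/avoidance argument forces $\Lambda_1(\delta)$ to lie weakly above $\Ccal(\delta)$; strict separation away from infinity then follows from the strong maximum principle since the two surfaces are distinct and both solve (resp. sub-solve) the expander equation. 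Here the rotational symmetry lets one argue purely with the profile curves in a half-plane. For (iv), one simply defines $\Lambda_2(\delta)$ to be the reflection of $\Lambda_1(\delta)$ across $P_{xy}$; since the self-expander equation and the cone $\Ccal(\delta)$ are invariant under this reflection (it sends $\Ccal(\delta)\cap\{z>0\}$ to $\Ccal(\delta)\cap\{z<0\}$ and preserves $X\cdot\nu$), $\Lambda_2(\delta)$ is the self-expander asymptotic to the lower cone, and uniqueness identifies it with the Ecker--Huisken solution there.

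The substantive point is (iii), the convergence $\Lambda_1(\delta)\to P_{xy}$ smoothly with multiplicity one as $\delta\to\infty$. The plan is: as $\delta\to\infty$ the cone $\Ccal(\delta)\cap\{z>0\}$ flattens to the half-plane-with-multiplicity picture degenerating onto $P_{xy}$ (the opening angle of the cone from the axis tends to $\pi/2$), and the flat plane $P_{xy}$ is itself a (trivial, $H=0$, $X\cdot\nu=0$) self-expander. One combines (a) a barrier squeeze — $\Lambda_1(\delta)$ lies above $\Ccal(\delta)$ by (i) and below, say, the shifted expander over a slightly wider cone or below a fixed horizontal plane, pinning it into a thin slab over large balls — with (b) interior curvature estimates for self-expanders (e.g. via the monotonicity formula \eqref{Monotonicity Formula} and $\varepsilon$-regularity, or Ecker--Huisken-type gradient estimates for expanding graphs) to upgrade $C^0$-closeness to $C^\infty_{loc}$ convergence, and (c) a multiplicity-one conclusion because the limit is attained as a graph (no folding can occur since each $\Lambda_1(\delta)$ is a graph over $P_{xy}$ and the slab estimate keeps the graph functions bounded with bounded gradient on compact sets). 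I expect the main obstacle to be making the barrier in (b) fully quantitative and uniform in $\delta$ near the origin, where the cone $\Ccal(\delta)$ is singular — one must check that the family $\{\Lambda_1(\delta)\}$ does not develop a neck or a second sheet near $r=0$ as $\delta\to\infty$, which is where the rotational-symmetry ODE analysis (boundedness of the profile $u(0)$ and $u'(0)$ uniformly in $\delta$, via the monotonicity formula controlling area ratios) does the real work.
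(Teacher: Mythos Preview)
Your proposal is correct and aligns with the paper's approach: the paper's proof consists entirely of the one-line citation ``From the construction of Angenent--Ilmanen--Chopp \cite{angenent1995computed} or Ecker--Huisken \cite[Remark 5.2 iii)]{Ecker1989Mean},'' and your outline (Ecker--Huisken for existence/uniqueness of the graphical expander over a graphical cone, rotational symmetry by uniqueness, the ODE/barrier analysis for (i) and (iii), and reflection for (iv)) is precisely an unpacking of what those references supply. Your elaboration is more detailed than what the paper records, but it is the same argument.
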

\begin{proof}
    From the construction of Angenent-Ilmanen-Chopp \cite{angenent1995computed} or Ecker-Huisken  \cite[Remark 5.2 iii)]{Ecker1989Mean}.
\end{proof}
\begin{notation}\label{not:Omegadelta}
    We use $\Omega(\delta)$ to denote the component of $\mathbb{R}^3 \backslash (\Lambda_1(\delta) \cup \Lambda_2(\delta))$ containing the $xy$-plane $P_{xy} $.
\end{notation}

$\Lambda_1(\delta)$ and $\Lambda_2(\delta)$ will be our barriers with a suitable choice of $\delta $ determined by the following technical lemma. 

\begin{lemma} \label{Unfiorm lower bound for minimal surfaces using barriers}
    For any $\epsilon \in (0, \mathcal{H}_w^2(P_{xy} \cap B_1))$, there exists $\delta_0 = \delta_0(\epsilon) > 0$ depending only on $\epsilon$ such that for any stable self-expander $\Sigma$ properly embedded in $B_1 \cap \Omega(\delta_0)$, $\Sigma$  has weighted area $\mathcal{H}_w^2(\Sigma) \geq \mathcal{H}_w^2(P_{xy} \cap B_1) - \epsilon$.
\end{lemma}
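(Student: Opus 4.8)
I will use only one consequence of the preceding lemma on $\Lambda_{1}(\delta),\Lambda_{2}(\delta)$: since $\Lambda_{1}(\delta)$ converges to $P_{xy}$ smoothly as a graph as $\delta\to\infty$ and $\Lambda_{2}(\delta)$ is its reflection across $P_{xy}$, the region $\Omega(\delta)\cap B_{1}$ collapses onto $P_{xy}\cap B_{1}$. Concretely, writing $\pi:\mathbb{R}^{3}\to P_{xy}$ for the orthogonal projection, there is a modulus $\eta(\delta)$, depending only on the fixed family $\{\Lambda_{1}(\delta)\}$, with $\eta(\delta)\to 0$ as $\delta\to\infty$, such that $\Omega(\delta)\cap B_{1}\subset\{|z|<\eta(\delta)\}$ and, for every $y\in P_{xy}$ with $|y|<1-2\eta(\delta)$, the fibre $\pi^{-1}(y)\cap\overline{\Omega(\delta)\cap B_{1}}$ is a vertical segment of half-width $\le\eta(\delta)$ whose endpoints lie on $\Lambda_{2}(\delta)$ and $\Lambda_{1}(\delta)$. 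I read the hypothesis ``$\Sigma$ properly embedded in $B_{1}\cap\Omega(\delta_{0})$'' in the way it is used in the construction: $\partial\Sigma\subset\partial B_{1}$ and $\Sigma$ separates the upper boundary piece $\Lambda_{1}(\delta_{0})\cap\overline{B_{1}}$ from the lower piece $\Lambda_{2}(\delta_{0})\cap\overline{B_{1}}$ inside $\overline{B_{1}\cap\Omega(\delta_{0})}$ (equivalently $[\Sigma]=[P_{xy}\cap B_{1}]$ in relative homology). This is precisely the situation in which the lemma is invoked, where $\Sigma$ is a graphical sheet of a degenerating area minimizer restricted to $B_{1}$, or the part inside $B_{1}$ of the connected minimizer itself: the barriers confine such surfaces to $\Omega(\delta_{0})$ and their ends or boundaries wrap around the near-equatorial cone, so they do separate the two boundary pieces.

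Granting this, the proof is a projection estimate (and does not use stability). For $y\in P_{xy}$ with $|y|<1-2\eta(\delta_{0})$, the vertical segment above lies in the open set $B_{1}\cap\Omega(\delta_{0})$ apart from its two endpoints on $\Lambda_{1}(\delta_{0})\cup\Lambda_{2}(\delta_{0})$, so the separation property forces $\Sigma$ to meet it; hence $\#\big(\Sigma\cap\pi^{-1}(y)\big)\ge 1$ for a.e.\ such $y$. Since $\pi$ is $1$-Lipschitz, the tangential Jacobian $J^{\Sigma}\pi$ of $\pi|_{\Sigma}$ satisfies $J^{\Sigma}\pi\le 1$ on $\Sigma$, and $|X|\ge|\pi(X)|$ pointwise, so
\[
\mathcal{H}_{w}^{2}(\Sigma)=\int_{\Sigma}e^{|X|^{2}/4}\,d\mathcal{H}^{2}\ \ge\ \int_{\Sigma}e^{|\pi(X)|^{2}/4}\,J^{\Sigma}\pi\,d\mathcal{H}^{2}.
\]
By the area formula applied to $\pi|_{\Sigma}$, the right-hand side equals $\int_{P_{xy}}e^{|y|^{2}/4}\,\#\big(\Sigma\cap\pi^{-1}(y)\big)\,d\mathcal{H}^{2}(y)$, which is at least $\int_{P_{xy}\cap B_{1-2\eta(\delta_{0})}}e^{|y|^{2}/4}\,d\mathcal{H}^{2}(y)=\mathcal{H}_{w}^{2}\big(P_{xy}\cap B_{1-2\eta(\delta_{0})}\big)$. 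Since $\mathcal{H}_{w}^{2}(P_{xy}\cap B_{r})\uparrow\mathcal{H}_{w}^{2}(P_{xy}\cap B_{1})$ as $r\uparrow 1$ and $\eta(\delta)$ depends only on the barriers, one may choose $\delta_{0}=\delta_{0}(\epsilon)$ large enough that $\mathcal{H}_{w}^{2}(P_{xy}\cap B_{1-2\eta(\delta_{0})})\ge\mathcal{H}_{w}^{2}(P_{xy}\cap B_{1})-\epsilon$, which gives the claim.

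\textbf{Where the difficulty lies.} The analytic input is entirely in the quoted lemma $\Lambda_{1}(\delta)\to P_{xy}$; the rest is bookkeeping once $\Sigma$ is trapped in a thin slab. The one load-bearing point is the separation (spanning) property: one must know that $\Sigma$ actually crosses every vertical fibre over $B_{1-2\eta}\cap P_{xy}$ and is not, for instance, a small cap near $\partial B_{1}$ with inessential boundary, a contingency that has to be excluded for the bare statement to be correct, so some care is needed in pinning down exactly which surfaces the lemma is meant to cover. If one instead prefers a contradiction argument (take $\delta_{j}\to\infty$ and $\Sigma_{j}$ with $\mathcal{H}_{w}^{2}(\Sigma_{j})<\mathcal{H}_{w}^{2}(P_{xy}\cap B_{1})-\epsilon$), then one extracts, using a uniform area bound together with stability to pass to a smooth limit by Schoen--Simon, a limit varifold supported on the totally geodesic plane $P_{xy}$, identifies it with $\theta\,(P_{xy}\cap B_{1})$ by the constancy theorem, and concludes by lower semicontinuity of weighted area; the same separation input is what forces $\theta\ge 1$ rather than $\theta=0$, and the confining barriers pinching onto $P_{xy}$ exactly where the limit lives makes interior curvature estimates degrade there, which is a further reason to prefer the direct projection argument.
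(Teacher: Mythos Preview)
Your proof is correct and takes a genuinely different route from the paper. The paper argues by contradiction: given a sequence $\Sigma_{\delta_n}$ with $\mathcal{H}^2_w(\Sigma_{\delta_n}) < \mathcal{H}_w^2(P_{xy}\cap B_1)-\epsilon$ and $\delta_n\to\infty$, it invokes Schoen's compactness theorem for stable minimal surfaces to extract a subsequence converging smoothly to $P_{xy}$ with some multiplicity $\eta$, and then asserts $\eta\ge 1$ to obtain the contradiction $\lim_n\mathcal{H}^2_w(\Sigma_{\delta_n})\ge \mathcal{H}^2_w(P_{xy}\cap B_1)$. Your projection argument is both more direct and more elementary: it uses only the area formula for $\pi|_\Sigma$ together with the pointwise inequalities $J^\Sigma\pi\le 1$ and $|X|\ge|\pi(X)|$, and in particular does not use stability at all. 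Your diagnosis of the load-bearing point is also sharper than the paper's: the separation hypothesis you make explicit is exactly what the paper tacitly uses when it writes $\eta\ge 1$ --- without some spanning condition the limit in the contradiction argument could be the zero varifold, and the bare statement would fail for small caps near $\partial B_1$. In the actual application (the sheets $C_{i,R}\cap B_1$ in the proof of Proposition~\ref{prop:SigmaR}) the separation property does hold, so both arguments go through there; yours has the merit of naming the hidden hypothesis and of dispensing with the compactness machinery.
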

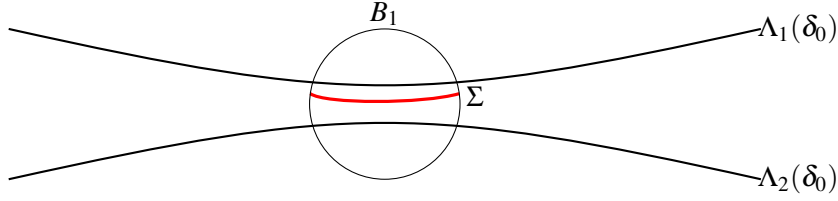
\begin{figure}%[H]
\centering
\begin{tikzpicture}
\draw[draw=black, thick] (-5,-1) .. controls (-.5,0) and (.5, 0)  .. (5,-1);
\draw [draw=red, very thick]  (-0.9903,0.1392) .. controls (-0.8,0) and (0.5, 0).. (0.9903,0.1392);
\node at (1.2, 0.1){$\Sigma$};
\node at (5.5, 1) {$\Lambda_1(\delta_0)$};
\node at (5.5, -1) {$\Lambda_2(\delta_0)$};
\draw[draw=black, thick] (-5,1) .. controls (-.5,0) and (.5,0) .. (5,1);
\draw (0,0) circle (1cm);
\node at (0, 1.2) {$B_1$};
\end{tikzpicture}
 \caption{$\Sigma$ in $B_1 \cap \Omega(\delta_0)$.}
  \label{fig:circle1}
\end{figure}
\begin{proof}
 We prove the lemma by contradiction. We fix $\epsilon > 0$. Suppose there exists a sequence of stable self-expanders $\Sigma_{\delta_n} \subset \Omega(\delta_n) \cap B_1$ properly embedded such that $ \mathcal{H}^2_w(\Sigma_{\delta_n}) < \mathcal{H}_w^2(P_{xy} \cap B_1) - \epsilon$ and $\delta_n \to \infty$. Now using $\Lambda_1(\delta_n) \cap B_1, \Lambda_2(\delta_n) \cap B_1$ as barriers and the compactness theorem for stable minimal surfaces in  \cite{schoen1983estimates},%\textcolor{blue}{more explanation on curvature estimate or more specific citation}, 
 we can show, after passing to a subsequence, that $\Sigma_{\delta_n}$ converges to $P_{xy}$ smoothly with some multiplicity $\eta$. Hence $\lim_{n \to \infty} \mathcal{H}^2_w(\Sigma_{\delta_n}) = \eta \mathcal{H}^2_w(P_{xy} \cap B_1) \geq \mathcal{H}^2_w(P_{xy} \cap B_1)$. This implies that $\mathcal{H}_w^2(\Sigma_{\delta_n}) \geq \mathcal{H}_w^2(P_{xy} \cap B_1) - \epsilon$ for all $n \in \mathbb{N}$ sufficiently large, which provides a contradiction.
\end{proof}

\begin{assumption} \label{Annulus has arbitr aily small area}
By Lemma \ref{Unfiorm lower bound for minimal surfaces using barriers} above, we can choose $\delta_0 > 0$ large enough such that any properly embedded stable self-expander $\Sigma$ in $B_1 \cap \Omega(\delta_0)$ has weighted area
\begin{equation*}
    \mathcal{H}^2_w(\Sigma) \geq \frac{1}{2} \mathcal{H}^2_w(P_{xy} \cap B_1) = 2\pi(e^{\frac{1}{4}}-1).
\end{equation*}

Observe that as $\Lambda_1(\delta) \cap \partial B_1, \Lambda_2(\delta_0) \cap \partial B_1$ converges to the equator circle $\partial B_1 \cap \{z = 0\}$ as $\delta \to \infty$. By increasing $\delta_0 > 0$ if necessary, we may assume that the annulus $\Acal \subset \partial B_1$ bounded by  $\Lambda_1(\delta_0) \cap \partial B_1, \Lambda_2(\delta_0) \cap \partial B_1$ has weighted area smaller than $\frac{1}{4}\mathcal{H}^2_w(P_{xy} \cap B_1)$. 

For technical purposes, we also assume that the points of intersection
\begin{equation*}
    \cup_{i=1,2}\Lambda_i(\delta_0)\cap l_z,
\end{equation*}
are contained in the unit ball $B_1$ by increasing $\delta_0 > 0$ if necessary. 
\end{assumption}
$\delta_0 > 0$ defined in Assumption \ref{Annulus has arbitr aily small area} above is the universal constant we need in Theorem \ref{the main result of the paper}. From now on, we will fix this choice of $\delta_0$ and $\Lambda_1(\delta_0), \Lambda_2(\delta_0)$ will serve as our barriers. They will help us construct connected self-expanders with positive genus in each closed ball. Once we finish this step, a slight modification of the theorems by Ding \cite[Theorem 6.3]{ding2020minimal} and Bernstein-Wang \cite[Lemma 8.2]{Bernstein2023Integer} will give a complete self-expander in the whole space with the desired asymptotical behavior. For the future use, we record this theorem below and include the proof in the appendix \ref{appendix theorem for the convergence to the limit complete self-expander}.
\begin{proposition}\label{Convergence to the limit self-expander}
Let $\Ccal \subset \mathbb{R}^3$ be a smoothly embedded cone. Suppose that for all sufficiently $R > 0$, there exists a properly embedded self-expander $\Sigma_R \subset B_R$ satisfying the following.
\begin{enumerate}[(i)]
    \item $\Sigma_R$ is connected.
    \item $g(\Sigma_R)$ is uniformly bounded above by $C$. 
    \item $\partial \Sigma_R = R \mathcal{L}[\Ccal]$.
\end{enumerate}
Then there is a complete embedded self-expander $\Sigma_{\infty} \subset \mathbb{R}^3$ such that after passing to subsequences, $\Sigma_R$ converges to $\Sigma_{\infty}$ smoothly with multiplicity 1, and $\Sigma_{\infty}$ is asymptotic to $\Ccal$. Moreover, the following hold.
\begin{enumerate}[(i)]
    \item $\Sigma_{\infty}$ is connected.
    \item $g(\Sigma_{\infty}) \leq C$.
    \item $\Sigma_{\infty}$ has the quadratic volume growth: $\mathcal{H}^2(\Sigma_{\infty} \cap B_R) \leq \frac{\mathcal{H}^1(\mathcal{L}[\Ccal])}{2} R^2$ for any $R > 0$.
\end{enumerate}
\end{proposition}
We also give below an easy corollary of Proposition \ref{Convergence to the limit self-expander} for later uses.
\begin{corollary} \label{disk}
Let $\Ccal$ be a graphical cone. Suppose for all $R > 0$ sufficiently large,  there is a self-expanders $\Sigma_R$ such that the following hold:
\begin{enumerate}[(i)]
    \item $\partial \Sigma_R = R \mathcal{L}[\Ccal]$.
    \item $g(\Sigma_R) \leq C$ is uniformly bounded above.
\end{enumerate}
Then after passing to subsequences, $\{\Sigma_R \}_{R > 0}$ will converge to a complete graphical self-expander $\Sigma_{\infty}$ smoothly of multiplicity $1$. Moreover, $\Sigma_R \cap B_1$ is a topological disk for all $R > 0$ large.
\end{corollary}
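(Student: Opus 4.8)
The plan is to apply Proposition \ref{Convergence to the limit self-expander} to the sequence $\{\Sigma_R\}$ and then invoke the Ecker--Huisken uniqueness result to identify the limit. First I would check the hypotheses of Proposition \ref{Convergence to the limit self-expander}. Connectedness of each $\Sigma_R$ is not assumed here, but since $\Ccal$ is a graphical cone its link $\mathcal{L}[\Ccal]$ is a single simple closed curve on $\mathbb{S}^2$, so $\partial \Sigma_R = R\,\mathcal{L}[\Ccal]$ is connected; after discarding any closed components (which, being closed self-expanders, cannot exist by the maximum principle applied with the spheres $\partial B_\rho$ together with \eqref{eq:Laplacian}), the surface $\Sigma_R$ with a single connected boundary curve must itself be connected. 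With (i) and (ii) of Proposition \ref{Convergence to the limit self-expander} in hand, we obtain, after passing to a subsequence, a complete embedded self-expander $\Sigma_\infty$ asymptotic to $\Ccal$, with $g(\Sigma_\infty) \le C$, to which $\Sigma_R$ converges smoothly with multiplicity $1$.

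Next I would identify $\Sigma_\infty$. Since $\Ccal$ is graphical, Ecker--Huisken \cite{Ecker1989Mean} provides a \emph{unique} complete self-expander asymptotic to $\Ccal$, and it is itself graphical; hence $\Sigma_\infty$ is this graphical self-expander. In particular $\Sigma_\infty$ is a complete graph over a plane, so $\Sigma_\infty \cap B_1$ is a topological disk, and moreover $\Sigma_\infty$ meets $\partial B_1$ transversally in a single simple closed curve (being a graph, its intersection with any centered sphere is a single closed curve once the radius is large enough to reach it; since $\Sigma_\infty$ passes through a point near the cone vertex this holds at radius $1$).

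Finally I would transfer the disk property from $\Sigma_\infty$ to $\Sigma_R$ for large $R$ by the smooth multiplicity-$1$ convergence. The convergence is smooth and graphical on compact sets, so for $R$ large $\Sigma_R \cap B_{3/2}$ is a normal graph of a small function over $\Sigma_\infty \cap B_{3/2}$; in particular $\Sigma_R \cap \partial B_1$ is a single simple closed curve $C^1$-close to $\Sigma_\infty \cap \partial B_1$, and $\Sigma_R \cap B_1$ is diffeomorphic to $\Sigma_\infty \cap B_1$, hence a topological disk. The main obstacle is the verification that no topology hides in the neck region near $\partial B_1$: one must know the convergence is genuinely smooth and multiplicity $1$ up to (and slightly past) the boundary sphere, which is why I would phrase everything in terms of convergence on the slightly larger ball $B_{3/2}$ and use that $\partial\Sigma_R = R\mathcal{L}[\Ccal]$ stays far from $B_{3/2}$ for $R$ large, so that interior estimates alone suffice and no boundary regularity issue arises.
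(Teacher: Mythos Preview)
Your proof is correct and follows essentially the same approach as the paper: apply Proposition \ref{Convergence to the limit self-expander}, invoke Ecker--Huisken uniqueness to identify $\Sigma_\infty$ as the graphical self-expander, and read off the disk property for $\Sigma_R\cap B_1$ from smooth multiplicity-$1$ convergence. You are in fact more careful than the paper in verifying the connectedness hypothesis of Proposition \ref{Convergence to the limit self-expander} (via the nonexistence of closed self-expanders) and in spelling out why no topology escapes near $\partial B_1$.
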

\begin{proof}
By Proposition \ref{Convergence to the limit self-expander}, $\Sigma_R$ will converge smoothly to a complete self-expander $\Sigma_{\infty}$ that is asymptotic to $\Ccal$  of multiplicity $1$. By Lemma \ref{graphical self-expander}, $\Sigma_{\infty}$ is graphical. That $\Sigma_R \cap B_1$ is a topological disk then follows from the definition of smooth convergence of multiplicity $1$ .
\end{proof}

\section{Connected self-expanders with positive genus in closed balls}
\noindent We will use the result of Meeks-Simon-Yau \cite{meeks1982embedded} to minimize area in the isotopy class of some specific initial surface with the topology that we want. A natural choice of this initial surface will be any surface isotopic to the Costa-Hoffman-Meeks surface from \cite{Hoffman1990Embedded}, which has $\Grp[g]$-symmetry and genus exactly $g$. 
\begin{notation}\label{not:Omega}
For notational convenience, we abbreviate $\Omega$ for $\Omega(\delta_0)$, $\Pi[g]$ for $\Pi[g, \delta_0]$ (recall \ref{def:Pi}) from now on. We will also omit the parameter "$[g]$" in this section when it is clear from the context. 
\end{notation}
Fix $g \in \mathbb{N}$ and $\Ccal[g] \in \Pi[g]$. Then $\Ccal[g] \subset \{x^2 + y^2 > \delta_0 z^2\} \subset \Omega$. $\Omega \cap B_R$ will be our desired ambient space to apply an equivariant version of  Meeks-Simon-Yau \cite[Theorem 1]{meeks1982embedded} (also see  \cite[Proposition 3.3]{de2010genus}) which we state below. For completeness, we include the proof of this proposition in \ref{prop:2}.
\begin{proposition}\label{Equivariant version of the minimzing problem} 
   Let $\Sigma \subset \Omega \cap B_R$ be a properly embedded $\Grp$-equivariant  surface with smooth boundary $\partial  \Sigma$ intersecting $S \cap \partial B_R$ transversally. Suppose that $\{\Delta^j := \varphi^j(1, \Sigma)\}_j$ is a $\Grp$-equivariant minimizing sequence (see \ref{def:Gmini}) for Problem $(\Sigma, \mathbf{Js}_{\Grp}(\Omega \cap B_R))$ (see \ref{G-isotopies}) converging to a stationary varifold $V$. Then $V = \bigcup_{k = 1}^N \Gamma_k$ is a union of connected disjoint embedded minimal surfaces $\Gamma_1, \dots, \Gamma_N \subset B_R$ with respect to the metric $g_{Exp}$, each with multiplicity 1. Moreover, the following hold.
    \begin{enumerate}[(i)]
     \item $\partial \Gamma_1 \cup \dots \cup \partial\Gamma_k = \partial \Sigma$.
     \item $\sum\limits_{k = 1}^N g(\Gamma_k) \leq g(\Sigma)$.
    \end{enumerate}
\end{proposition}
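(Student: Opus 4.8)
The plan is to adapt the classical Meeks-Simon-Yau minimization \cite[Theorem 1]{meeks1982embedded} to the equivariant setting, following the template of \cite[Proposition 3.3]{de2010genus} but working in the conformal metric $g_{Exp}$ rather than a Riemannian metric with positive Ricci curvature. The key structural point is that all of the Meeks-Simon-Yau analysis is local and variational, so it transplants verbatim once we know that the ambient metric $g_{Exp}$ is smooth on $\Omega \cap B_R$ (which it is, being a smooth conformal change of the Euclidean metric) and that the barriers $\Lambda_1(\delta_0), \Lambda_2(\delta_0)$ bounding $\Omega$ are minimal for $g_{Exp}$, hence confine the minimizing sequence to $\overline{\Omega} \cap B_R$. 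First I would invoke the non-equivariant Meeks-Simon-Yau theorem applied to the $\Grp$-equivariant minimizing sequence $\{\Delta^j\}_j$: this produces a stationary integral varifold $V$, a union $\bigcup_{k=1}^N \Gamma_k$ of connected, pairwise disjoint, smoothly embedded minimal surfaces (with respect to $g_{Exp}$), each of multiplicity one, with $\partial(\bigcup_k \Gamma_k) = \partial\Sigma$, and with the genus bound $\sum_k g(\Gamma_k) \le g(\Sigma)$ coming from their lower-semicontinuity-of-genus argument (the $\gamma$-invariant under the surgeries in their proof).

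The new content is \emph{equivariance} of the limit, i.e.\ that each $\Gamma_k$ is invariant under the (possibly trivial) subgroup of $\Grp$ stabilizing it as a set, and more to the point that the limit varifold $V$ is $\Grp$-invariant. This follows because the minimizing sequence is chosen $\Grp$-equivariant and the $g_{Exp}$-area is $\Grp$-invariant (the group $\Grp[g]$ acts by Euclidean isometries preserving $|X|$, hence preserves $e^{|X|^2/2n} g_{Euc}$), so the limit varifold inherits invariance; then each connected component $\Gamma_k$ is permuted by $\Grp$, and the support of $V$ is a $\Grp$-invariant minimal set. The only subtlety is what happens along the singular locus $S[g]$: a priori a component $\Gamma_k$ could meet a reflection plane $P_{j\pi/(g+1)}$ or a rotation axis in $S_1[g]$, and one must check the limit surface is still smooth and embedded there. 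Here the standard device (as in \cite{de2010genus}) is that on the fixed-point set of a reflection $\varphi$, an equivariant minimal surface either lies entirely in $\mathrm{Fix}(\varphi)$ or meets it orthogonally, and in either case regularity is preserved — this is where the transversality hypothesis $\partial\Sigma \pitchfork S \cap \partial B_R$ is used, to rule out boundary pathologies, and one propagates it inward via the maximum principle and unique continuation.

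The main obstacle I expect is the genus estimate (ii) in the presence of the group action and the surgeries: Meeks-Simon-Yau's proof replaces the minimizing sequence by surfaces obtained through $\gamma$-reductions (cutting along embedded curves and capping off), and one must verify these surgeries can be performed \emph{$\Grp$-equivariantly}, so that the reduced surfaces remain in the equivariant isotopy class $\mathbf{Js}_{\Grp}(\Omega \cap B_R)$ and the genus accounting $\sum_k g(\Gamma_k) \le g(\Sigma)$ survives. The resolution is that the curves along which one surgers can be taken to be either $\Grp$-invariant or to come in $\Grp$-orbits of disjoint curves (by a general position argument, after averaging/symmetrizing the cut loci), and an equivariant cut-and-paste does not increase total genus — this is precisely the equivariant refinement carried out in \cite[Proposition 3.3]{de2010genus}, which I would follow. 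Finally I would note that multiplicity one and the absence of interior branch points follow from the Meeks-Simon-Yau regularity theory exactly as in the non-equivariant case, since that theory is insensitive to the conformal factor, and that $\partial\Gamma_1 \cup \dots \cup \partial\Gamma_N = \partial\Sigma$ because the barriers prevent boundary from escaping and the isotopies fix $\partial\Sigma$. The detailed verification is deferred to Appendix \ref{prop:2}.
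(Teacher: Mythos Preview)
Your proposal has a genuine gap at its first step: you cannot directly invoke the non-equivariant Meeks--Simon--Yau theorem on the $\Grp$-equivariant minimizing sequence $\{\Delta^j\}$. Meeks--Simon--Yau requires the sequence to be minimizing among \emph{all} isotopies, but $\{\Delta^j\}$ only minimizes among $\Grp$-equivariant isotopies $\mathbf{Js}_{\Grp}(\Omega\cap B_R)$. The equivariant infimum can be strictly larger than the non-equivariant one, so the replacement machinery of MSY does not apply globally, and you cannot conclude smoothness of $V$ this way. Relatedly, \cite[Proposition 3.3]{de2010genus} is not an equivariant refinement---it treats the non-equivariant genus bound---so your citation for the equivariant surgery step does not supply what you need.

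The paper handles this correctly by localizing. Away from the singular locus $S$, in a small annulus $An$, the orbit $\Grp(An)$ is a disjoint union of copies of $An$, so the equivariant problem $(\Delta^j,\mathbf{Js}_{\Grp}(\Grp(An)))$ is equivalent to the \emph{non}-equivariant problem $(\Delta^j\cap An,\mathbf{Js}(An,An))$; only then can one invoke the Colding--De Lellis replacement theory \cite{colding2003min} and MSY regularity. Near each stratum of $S$ the reduction is different: along the horizontal axes $S_1$ the local problem becomes a $\mathbb{Z}_2$-equivariant one, handled via \cite[Proposition 7.3]{ketover2016free}; along the reflection planes $S_2$ it becomes a free boundary problem, handled via \cite[Theorems 7.3, 8.3]{Li2015General}; and the finitely many residual points on $l_z\cup\partial B_R$ are removed by \cite[Proposition 1]{choi1985space}. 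Your appeal to ``maximum principle and unique continuation'' for the singular locus is too coarse to substitute for this case analysis. Finally, multiplicity one is not a consequence of MSY regularity alone: the paper uses that any component with empty boundary would be a closed self-expander, which cannot exist, so every $\Gamma_k$ inherits boundary from $\partial\Sigma$ and hence has multiplicity one.
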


%Our aim is to show that $\Sigma_R$ is connected and we proceed by an iteration argument. If $\Sigma_R$ is not connected, after doing a small surgery inside the unit ball to make it connected, we apply Proposition \ref{Equivariant version of the minimzing problem} again. The surgery will reduce the weighted area by a defat each step must have weighted area bounded below by the weighted area of the area-minimizing current in $\Omega \cap B_R$ with the boundary $R\mathcal{L}[\Ccal]$. We will finally arrive at some connected self-expander after finitely many such iterations.

%\begin{definition}\label{annulus has smaller area in the ball}
     % Moreover
    %\begin{equation*}
   % \mathcal{H}_w^2(\Sigma_R) = \inf_{\varphi \in \mathbf{Js}_{\Grp[g]}(\Omega \cap B_R)} \mathcal{H}^2_w(\varphi(1, CHM_R)).
%\end{equation*}
%\end{definition}

  %It will serve as our initial surface in the minimizing process. 

\begin{figure}\label{fig:Scherk}%[H]
  \centering
  \includegraphics[width=0.4\textwidth]{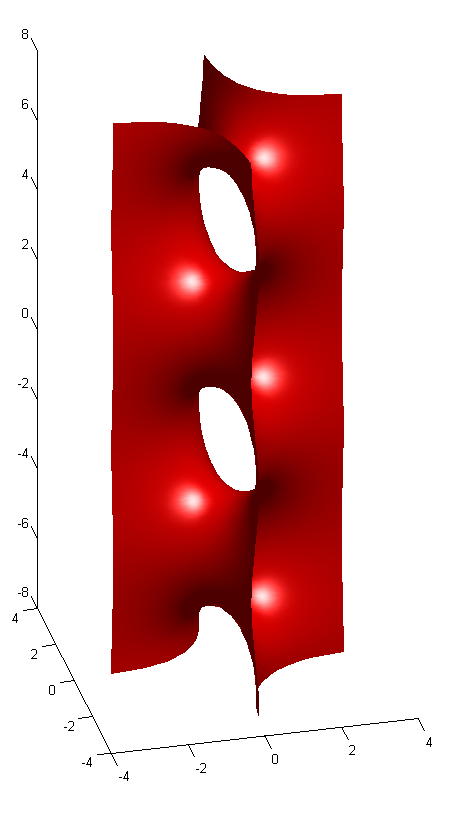}
  \caption{Scherk's Second Surface \cite{wikipedia-scherk}.}
\end{figure}

%\begin{proof}
%Applying Proposition \ref{Equivariant version of the minimzing problem} to $CHM_R$, we obtain a $\Grp[g]$-equivariant properly embedded self-expander $\Sigma_R$ with $\partial \Sigma_R = R\mathcal{L}[\Ccal]$ satisfying $g(\Sigma_R)$ $\leq g$ and has the weighted area
Now we give the main proposition in this section.
%\end{proof}
\begin{proposition}\label{prop:SigmaR}
For any $R > 0$ big enough, there is a smooth embedded self-expander $\Sigma_R \subset \Omega \cap B_R$ with boundary $R\mathcal{L}[\Ccal[g]]$ and genus $g$. 
\end{proposition}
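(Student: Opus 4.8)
The plan is to run the iterative area-minimization scheme outlined in the introduction. First I would fix $R$ large (large enough that $R\mathcal{L}[\Ccal[g]]$ lies well inside $\Omega\cap B_R$ and that the intersection $S[g]\cap\partial B_R$ is transverse to $R\mathcal{L}[\Ccal[g]]$) and take as initial surface a $\Grp[g]$-equivariant genus-$g$ surface $\Sigma^{(0)}\subset\Omega\cap B_R$ isotopic to a Costa--Hoffman--Meeks surface, with $\partial\Sigma^{(0)}=R\mathcal{L}[\Ccal[g]]$ (this is possible since the three links $\mathcal{L}_i$ bound a standard CHM-type surface and $\Ccal[g]$ has exactly the $\Grp[g]$-symmetry of the CHM surface). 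Apply Proposition \ref{Equivariant version of the minimzing problem} to get a stationary (w.r.t.\ $g_{Exp}$) minimizer $V^{(0)}=\bigcup_{k}\Gamma_k$ with $\partial(\cup_k\Gamma_k)=R\mathcal{L}[\Ccal[g]]$ and $\sum_k g(\Gamma_k)\le g$. If $V^{(0)}$ is connected with genus $g$ we are done; the content is in handling the degenerate case.

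The dichotomy is: either $V^{(0)}$ is one connected self-expander of genus $g$ with the prescribed boundary (done), or it has degenerated. Since $\partial$ of the union is $R\mathcal{L}[\Ccal[g]]=R(\mathcal{L}_1\sqcup\mathcal{L}_2\sqcup\mathcal{L}_3)$, each $\mathcal{L}_i$ bounds a simple graphical cone by Definition \ref{def:Pi}(iii); by the barrier argument (Bernstein--Wang \cite[Lemma 8.2]{Bernstein2023Integer}, Ding \cite[Theorem 6.3]{ding2020minimal}) together with the Ecker--Huisken uniqueness, any component with boundary a single $\mathcal{L}_i$ and which is unknotted/graphical-asymptotic is forced to be (close to) the graphical sheet $\Lambda_i$. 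So the only way to fail is that $V^{(0)}$ splits off (near-)graphical pieces over the three sheets; after accounting for $\Grp[g]$-symmetry and the genus bound, the worst case is three disjoint topological disks $D_1,D_2,D_3$, one over each $\mathcal{L}_i$. The key geometric input is that these three disks must pass through the region $\Omega\cap B_1$: indeed each $\partial D_i=R\mathcal{L}_i$ lies outside $B_1$ on ``one side'' in the $z$-direction but the minimizer, being trapped between the barriers $\Lambda_1(\delta_0),\Lambda_2(\delta_0)$, cannot avoid a definite chunk of $B_1$; by Assumption \ref{Annulus has arbitr aily small area} each such disk has weighted area at least $\tfrac12\mathcal{H}^2_w(P_{xy}\cap B_1)=2\pi(e^{1/4}-1)$ inside $B_1$, so the three disks together carry weighted area $\ge\tfrac32\mathcal{H}^2_w(P_{xy}\cap B_1)$ accumulating in $B_1$.

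Next I would perform an equivariant surgery-and-desingularization inside $B_1$: near the (nested, close) three disks one inserts a $\Grp[g]$-equivariant neck/handle configuration — modeled on a rescaled Scherk second surface as in Figure \ref{fig:Scherk} — joining the three sheets into a single connected genus-$g$ surface $\Sigma^{(1)}$, still with $\partial\Sigma^{(1)}=R\mathcal{L}[\Ccal[g]]$, still isotopic to the CHM surface, and still contained in $\Omega\cap B_R$. The gluing is supported in a small ball and replaces a definite weighted area (the $\ge\tfrac32\mathcal{H}^2_w(P_{xy}\cap B_1)$ worth of disk pieces, plus the annulus correction, which is $<\tfrac14\mathcal{H}^2_w(P_{xy}\cap B_1)$ by Assumption \ref{Annulus has arbitr aily small area}) by a much smaller one, so $\mathcal{H}^2_w(\Sigma^{(1)})\le\mathcal{H}^2_w(V^{(0)})-c_0$ for a fixed $c_0>0$ independent of the iteration. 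Then re-run Proposition \ref{Equivariant version of the minimzing problem} with $\Sigma^{(1)}$ as initial surface to get $V^{(1)}$, etc. Since weighted area is bounded below (e.g.\ by the monotonicity formula \eqref{Monotonicity Formula} applied at the origin, or simply by $0$) and drops by $c_0$ at each degeneration, after finitely many steps the minimizer cannot degenerate, yielding the desired connected genus-$g$ embedded self-expander $\Sigma_R\subset\Omega\cap B_R$ with $\partial\Sigma_R=R\mathcal{L}[\Ccal[g]]$.

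The main obstacle is the surgery step: one must verify that the equivariant desingularization can be carried out so that (a) the result stays inside $\Omega\cap B_R$ and stays in the correct isotopy class (so the genus is exactly $g$ and connectedness is genuinely recovered, not merely the Euler characteristic), (b) the weighted-area gain $c_0$ is uniform over all iterations and all the a priori possible degenerate configurations — this is where one really needs the barrier estimate of Lemma \ref{Unfiorm lower bound for minimal surfaces using barriers} and Assumption \ref{Annulus has arbitr aily small area} to guarantee the degenerate pieces are both close together and area-heavy inside $B_1$ — and (c) the surgery respects the $\Grp[g]$-action so that the subsequent minimization is again legitimately $\Grp[g]$-equivariant. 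A secondary point is ruling out intermediate degenerations (two sheets rather than three, or a sheet-plus-closed-component): these are handled the same way, using that any component bounding a sub-collection of the $\mathcal{L}_i$ is pinned near the corresponding barrier graphs and hence again sweeps out definite area in $B_1$, so the same surgery applies.
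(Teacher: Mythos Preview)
Your overall iterative strategy matches the paper's, but there is a genuine missing ingredient: the Riemann--Hurwitz formula for the $\mathbb{Z}_{g+1}$-action by rotation about $l_z$. The paper uses it in two places your proposal either flags as an obstacle or simply asserts. First, when the minimizer is connected, Proposition~\ref{Equivariant version of the minimzing problem} only yields genus $\le g$; to upgrade to genus $=g$ the paper computes the Euler characteristic of the $\mathbb{Z}_{g+1}$-quotient (counting branch points on $l_z$) and checks that the only integer solution with three boundary components and genus $\le g$ forces genus $=g$. Your dichotomy ``connected with genus $g$'' versus ``degenerate'' therefore has an unjustified branch: connected with genus $<g$ is not ruled out by anything you wrote, and it is not clear your surgery could repair that configuration. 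Second, in the disconnected case you assert ``the worst case is three disjoint topological disks'', but a priori the middle component $C_{2,R}$ (the one containing the horizontal axes) could have genus $g$; the paper rules this out by a second Riemann--Hurwitz computation on $C_{2,R}\setminus B_1$, after first using Corollary~\ref{disk} to know $C_{2,R}\cap B_1$ is a disk and hence $C_{2,R}\setminus B_1$ is disjoint from $l_z$ (so the quotient has no branch points, forcing $g/(g+1)\in\mathbb{Z}$, a contradiction).

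Two smaller points. The area lower bound from Assumption~\ref{Annulus has arbitr aily small area} applies to \emph{stable} self-expanders; a $\Grp$-equivariant minimizer is only $\Grp$-stable a priori, and the paper invokes \cite[Proposition~4.6]{ketover2016equivariant} (existence of a $\Grp$-equivariant normal field on components transverse to $S[g]$) to promote this to full stability for $C_{1,R},C_{3,R}$---you do not address this step. And the paper's surgery is more concrete and slightly different from yours: it removes only the two outer caps $C_{1,R}\cap B_1,\ C_{3,R}\cap B_1$, bridges them by the spherical annulus $\Acal_R'\subset\partial B_1$ between $\sigma_{1,R},\sigma_{3,R}$ (area $<\tfrac14\mathcal{H}^2_w(P_{xy}\cap B_1)$ by Assumption~\ref{Annulus has arbitr aily small area}), and only then desingularizes the resulting annulus against $C_{2,R}$ along the single curve $\sigma_{2,R}$. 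This gives a clean definite drop of $\tfrac12\mathcal{H}^2_w(P_{xy}\cap B_1)$ per iteration without having to estimate the area of a three-sheet Scherk insert inside $B_1$.
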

 \begin{proof}
 We first choose a $\Grp[g]$-equivariant properly embedded annulus $\Acal_R\subset B_R \cap \Omega$ such that $\partial\Acal_R=R\mathcal{L}_1 \cup R\mathcal{L}_3$. We then choose a $\Grp[g]$-equivariant properly embedded disk $D_R \subset B_R \cap \Omega$ such that  $\partial D_R=R\mathcal{L}_2$ and $c_R:=\Acal_R\cap D_R$ is one simple closed curve in the interior of $D_R$. %(This is doable since $0 \neq [\mathcal{L}_1] = [\mathcal{L}_3]  \in \emph{H}_1(\mathbb{R}^3 \backslash l_z ; \mathbb{Z}_2)$, $0 \neq [\mathcal{L}_2] \in \emph{H}_1(\mathbb{R}^3 \backslash l_z ; \mathbb{Z}_2)$).  

     Now we desingularize $\Acal_R \cup D_R$ in a small tubular neighborhood $T$ of $c_R$ by gluing the singular surface $(\Acal_R\cup D_R) \setminus T$ with a $\Grp[g]$-equivariant genus $g$ surface which is obtained after "bending" a $(g+1)$ period of \emph{the Scherk's Second Surface} (Figure \ref{fig:Scherk}) as in \cite[Definition 3.6]{kapouleas:1997}. We denote the resulting $\Grp[g]$-equivariant properly embedded genus-$g$ surface by $CHM_{R}$.

Applying Proposition \ref{Equivariant version of the minimzing problem} to $CHM_R$ in $\Omega \cap B_R$, we obtain a multiplicity $1$ embedded $\Grp[g]$-equivariant self-expander 
     \begin{equation}\label{eq:Sigma0}
         \Sigma_{R,0} \subset B_R \cap \Omega
     \end{equation}
     with boundary $R\mathcal{L}[\Ccal]$. Moreover, we have
\begin{equation} \label{genus bounds}
    g(\Sigma_{R, 0}) \leq g \text{ and } \mathcal{H}^2_w(\Sigma_{R, 0})=\inf_{\varphi \in \mathbf{Js}_{\Grp[g]}(\Omega \cap B_R)} \mathcal{H}^2_w(\varphi(1, CHM_{R})).
\end{equation}
 Due to the fact that $\partial\Sigma_{R,0}$ has 3 boundary components and the $\Grp[g]$-symmetry, $\Sigma_{R,0}$ either is connected  or has 3 components $C_{1, R}, C_{2,R}, C_{3, R}$ with boundary $R\mathcal{L}_1, R\mathcal{L}_2, R\mathcal{L}_3$ respectively.
     
If $\Sigma_{R,0}$ is connected, then an argument by Riemann-Hurwitz formula below will show that $g(\Sigma_{R, 0}) = g$
, and we just choose $\Sigma_R:=\Sigma_{R,0}$. 

Suppose $\Sigma_{R,0}$ is not connected. By the symmetry again and \cite[Lemma 3.4 and Lemma 3.5]{ketover2016equivariant}, $C_{2, R}$ must contain the horizontal axes $\bigcup_{j = 1}^{g + 1} l_{\frac{(2j-1)\pi}{2(g+1)}}$, orthogonal to $l_z$ (z-axis) and $\bigcup_{j = 0}^{g} P_{\frac{j \pi}{g + 1}}$; $C_{1, R}$ (isometric to $C_{3, R}$) is orthogonal to $l_z$ and $\bigcup_{j = 0}^{g} P_{\frac{j \pi}{g + 1}}$,  disjoint from the horizontal axes $\bigcup_{j = 1}^{g + 1} l_{\frac{(2j-1)\pi}{2(g+1)}}$. By \cite[Lemma B.1]{carlotto2020free}, $C_{i, R}$ has genus either 0 or $g$ for $i \in \{1, 2, 3\}$. Due to the genus bound (\ref{genus bounds}), $C_{1, R}, C_{3, R}$ can only be topological disks. Also note that since $\Ccal_i[g]$ is graphical, by Corollary \ref{disk}, $C_{i, R} \cap B_1$ is a topological disk for each $i \in \{1, 2, 3\}$. We define $\sigma_{i, R}:=\partial (C_{i, R} \cap B_1)$ to be the simple closed curve where each of them intersects the unit ball. 

Now we claim that $C_{2, R}$ is also a  topological disk for all $R > 1$ sufficiently large. Suppose on the contrary, there exists a sequence $\{R_n\}$ such that $R_n \to \infty$ and $g(C_{2, R_n}) \equiv g$. As $C_{2, R_n} \cap B_1$ is a topological disk by arguments above, $C_{2, R_n} \backslash B_1$ has 2 boundary components and genus $g$. Its Euler characteristic is then $\chi(C_{2, R_n} \backslash B_1) = -2g$. On the other hand, by Assumption \ref{Annulus has arbitr aily small area}, $\Lambda_1(\delta_0) \cap l_z \subset B_1$. Thus, $C_{2, R_n} \backslash B_1$ is disjoint from $l_z$. Then the quotient surface $(C_{2, R_n} \backslash B_1) / \mathbf{C}_{g+1}$ (recall \eqref{eq:cyclic}) is a compact Lipschitz surface with 2 boundary components and no branch points. Its Euler characteristic is then $\chi((C_{2, R_n} \backslash B_1) / \mathbf{C}_{g+1}) = -2 g((C_{2, R_n} \backslash B_1) / \mathbf{C}_{g+1})$. Now the action of $\mathbf{C}_{g+1}$ on $C_{2, R_n}$ is orientation-preserving, by the \textit{Riemann-Hurwitz formula}, %applying to $C_{2, R_n} \backslash B_1$, 
we have
\begin{equation*}
\begin{split}
    -2g & =
    \chi(C_{2, R_n} \backslash B_1) \\
    & = (g + 1)\chi((C_{2, R_n} \backslash B_1) /\mathbf{C}_{g+1}))\\
    & = (g + 1)(-2g((C_{2, R_n} \backslash B_1) / \mathbf{C}_{g+1})),
\end{split}
\end{equation*}
which implies
\[
 g((C_{2, R_n} \backslash B_1) / \mathbf{C}_{g+1}) = \frac{g}{g + 1} \notin \mathbb{Z}.
\]
This is a contradiction. %We have arrived at the claim and $C_{2, R}$ is a disk for all $R > 1$ sufficiently large. Now we only need to rule out the case that 
Therefore, $C_{1, R}, C_{2, R}, C_{3, R}$ are all topological disks.

 Now we perform a surgery on $C_{1, R} \cup C_{3, R}$. First observe that since they intersect transversally with $S[g]$ (recall \ref{lem:S}), we can define a $\Grp$-equivariant normal vector field on each of them, by \cite[Proposition 4.6]{ketover2016equivariant}, $C_{1, R}, C_{3, R}$ are stable minimal surfaces with respect to $g_{Exp}$ and hence for $i =1, 3$, $\mathcal{H}^2_w(C_{i, R} \cap B_1) \geq \frac{1}{2} \mathcal{H}^2_w(P_{xy} \cap B_1)$ holds by Assumption \ref{Annulus has arbitr aily small area}. We replace the two disks $C_{1, R} \cap B_1, C_{3, R} \cap B_1$ by the annulus $\Acal'_R \subset \partial B_1$ bounded by $\sigma_{1, R}, \sigma_{3, R}$. Observe that $\mathcal{H}_w^2(\Acal'_R) \leq \mathcal{H}_w^2(\Acal) < \frac{1}{4}\mathcal{H}^2_w(P_{xy} \cap B_1)$ since $\Acal'_R \subset \Acal$, where $\Acal$ is defined above in Assumption \ref{Annulus has arbitr aily small area}. After this process, we obtain a big annulus $\Acal''_R:= ((C_{1, R} \cup C_{3, R}) \backslash B_1) \cup \Acal'_R$ which has a smaller weighted area than $C_{1, R} \cup C_{3, R}$ since
\begin{equation} \label{Area decrease by adding in the annulus and reomving the two disks}
\begin{split}
    \mathcal{H}_w^2(\Acal''_R) & = 2\mathcal{H}_w^2(C_{1,R})  - 2\mathcal{H}_w^2(C_{1,R} \cap B_1) + \mathcal{H}_w^2(\Acal'_R) \\
    & \leq 2\mathcal{H}_w^2(C_{1,R}) - \mathcal{H}_w^2(P_{xy} \cap B_1) + \frac{1}{4}\mathcal{H}^2_w(P_{xy} \cap B_1) \\
    & = 2\mathcal{H}_w^2(C_{1,R}) - \frac{3}{4}\mathcal{H}_w^2(P_{xy} \cap B_1), %\\
    %& < 2\mathcal{H}_w^2(C_{1,R}),
\end{split}
\end{equation}

Now observe that $\Acal''_R$ intersects $C_{2, R}$ only along the simple closed curve $\sigma_{2, R}$, by a similar desingularization above, we can desingularize $\Acal''_R \cup C_{2, R}$ along $\sigma_{2, R}$ to obtain a genus $g$ smoothly embedded $\Grp[g]$-equivariant surface $CHM_{R,1}$ (smoothing if necessary) that is isotopic to a part of the Costa-Hoffman-Meeks surface. Since the desingularization only happens in a small tubular neighborhood of $\sigma_{2, R}$ and the change of area is as small as one wants, by \eqref{Area decrease by adding in the annulus and reomving the two disks}, we can make
\begin{equation*}
    \mathcal{H}_w^2(CHM_{R,1}) \leq 2\mathcal{H}_w^2(C_{1,R}) + \mathcal{H}^2_w(C_{2, R})- \frac{1}{2}\mathcal{H}_w^2(P_{xy} \cap B_1).
\end{equation*}

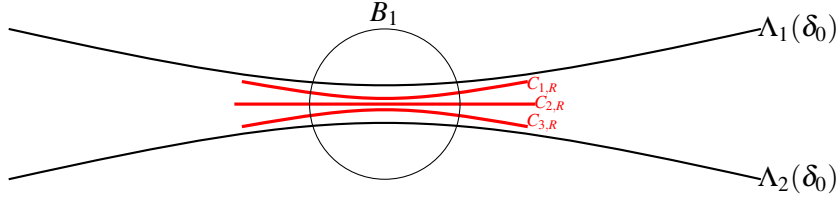
\begin{figure}%[H]
\centering
\begin{tikzpicture}
\draw [draw=red, very thick] (-2,0) -- (2,0);
\draw[draw=black, thick] (-5,-1) .. controls (-.5,0) and (.5, 0)  .. (5,-1);
\draw [draw=red, very thick]  (-1.9,0.3) .. controls (-.2,0) and (.2,0) .. (1.9,0.3);
\node[font=\tiny] at (2.1, 0.25){\textcolor{red}{$C_{1, R}$}};
\draw [draw=red, very thick]  (-1.9,-0.3) .. controls (-.2,0) and (.2,0) .. (1.9,-0.3);
\node[font=\tiny] at (2.1, -0.25){\textcolor{red}{$C_{3, R}$}};
\
\node[font=\tiny] at (2.2, 0){\textcolor{red}{$C_{2, R}$}};
\node at (5.5, 1) {$\Lambda_1(\delta_0)$};
\node at (5.5, -1) {$\Lambda_2(\delta_0)$};
\draw[draw=black, thick] (-5,1) .. controls (-.5,0) and (.5,0) .. (5,1);
\draw (0,0) circle (1cm);
\node at (0, 1.2) {$B_1$};
\end{tikzpicture}
 \caption{The 3 components $C_{1, R},C_{2, R},C_{3, R}$ if $\Sigma_R$ is not connected}
  \label{fig:circle2}
\end{figure}
\begin{figure}%[H]
\centering
\begin{tikzpicture}
\draw [draw=red, very thick] (-2,0) -- (2,0);
\draw [draw=red, ultra thick] (1,0) arc[start angle=0, end angle=8, radius=1cm];
\draw [draw=red, ultra thick] (1,0) arc[start angle=0, end angle=-8, radius=1cm];
\draw [draw=red, ultra thick] (-1,0) arc[start angle=0, end angle=8, radius=1cm];
\draw[draw=black, thick] (-5,-1) .. controls (-.5,0) and (.5, 0)  .. (5,-1);
\draw [draw=red, very thick]  (-1.9,0.3) .. controls (-1.1,.15)  .. (-1,0.1392);
\draw [draw=red, very thick]  (-1.9,-0.3) .. controls (-1.1,-.15)  .. (-1,-0.1392);
\draw [draw=red, very thick]  (1, 0.1392) .. controls (1.1,.15)  .. (1.9, 0.3);
\draw [draw=red, very thick]  (1, -0.1392) .. controls (1.1,-.15)  .. (1.9, -0.3);
\draw [draw=red, ultra thick] (-1,0) arc[start angle=172, end angle=180, radius=1cm];
\node[font=\tiny] at (2.2, 0){\textcolor{red}{$C_{2, R}$}};
\node[font=\tiny] at (2.2, 0.25){\textcolor{red}{$\Acal''_R$}};
\node at (5.5, 1) {$\Lambda_1(\delta_0)$};
\node at (5.5, -1) {$\Lambda_2(\delta_0)$};
\draw[draw=black, thick] (-5,1) .. controls (-.5,0) and (.5,0) .. (5,1);
\draw (0,0) circle (1cm);
\node at (0, 1.2) {$B_1$};
\end{tikzpicture}
 \caption{The annulus $\Acal''_R$ and  the disk $C_{2, R}$ after the surgery}
  \label{fig:circle3}
\end{figure}
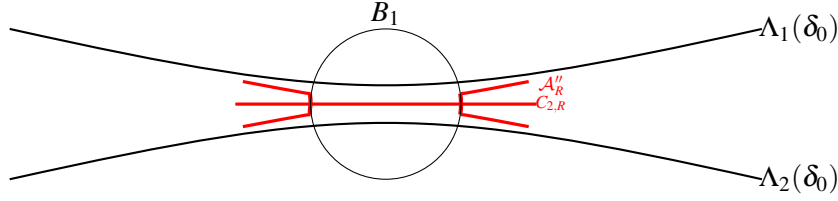

We now apply Proposition \ref{Equivariant version of the minimzing problem} again to $CHM_{R,1}$ in $\Omega \cap B_R$ to obtain a new surface $\Sigma_{R,1}$. We start the iteration by replacing $\Sigma_{R,0}$ by $\Sigma_{R,1}$ in \eqref{eq:Sigma0}. We can keep this process as long as the surface obtained is not connected. Each step of the iteration will make the weighted area decrease by a definite amount $\frac{1}{2}\mathcal{H}_w^2(P_{xy} \cap B_1) = 2\pi(e^{\frac{1}{4}}-1)$. On the other hand, the resulting surface after each iteration has weighted area bounded below by the $E$-minimizing current  $T$ in $\Omega \cap B_R$ with $\partial T =R\mathcal{L}[\Ccal[g]]$. By Section \ref{Preliminaries}, $T$ is given by a multiplicity 1 smoothly embedded self-expander, and has positive weighted area. So the iteration has to stop in finitely many steps. Thus at some step $j$, we obtain a connected self-expander $\Sigma_{R,j}$ with boundary $R\mathcal{L}[\Ccal]$ and genus bounded by $g$. 

Applying Riemann-Hurwitz formula as in \cite[Lemma B.1]{carlotto2020free}, we have
\begin{equation} \label{Riemann-Hurwitz formaula for Sigma_R}
\begin{split}
     2 - 2 g(\Sigma_{R,j}) - 3 & = \chi(\Sigma_R) \\
     & = (g + 1)\chi(\Sigma_{R,j} / \mathbf{C}_{g+1}) - kg \\
     & =(g + 1)(2 - 2g(\Sigma_{R,j} / \mathbf{C}_{g+1}) -3) - kg,
\end{split}   
\end{equation}
where $k$ is the number of intersection points of $\Sigma_{R,j}$ with $l_z$ including the origin. After simplifying equation \eqref{Riemann-Hurwitz formaula for Sigma_R}, we obtain
\begin{equation} \label{Riemann-Hurwitze formula 2}
2g(\Sigma_{R,j}) + 1 = (g + 1)(1 + 2g(\Sigma_R /\mathbf{C}_{g+1})) + kg.
\end{equation}
Since $g(\Sigma_{R,j}) \leq g$, the only integer solution for \eqref{Riemann-Hurwitze formula 2} is 
\[
g(\Sigma_{R,j}) = g,\, g(\Sigma_{R,j} /\mathbf{C}_{g+1}) = 0, \, k = 1.
\]
We then just set $\Sigma_R:=\Sigma_{R,j}$.
\end{proof}

\section{Complete Connected self-expanders with Positive genus and 3 Ends}
\noindent Now that we have constructed $\Grp[g]$-equivariant stable self-expanders $\Sigma_R \subset B_R$ with $\partial \Sigma_R = R \mathcal{L}(C)$ in each closed ball. We can now give the proof of Theorem \ref{the main result of the paper}

\begin{proof}[Proof of Theorem \ref{the main result of the paper}:]
    By Proposition \ref{Convergence to the limit self-expander} and \ref{prop:SigmaR}, for all $g > g_0$, $\Sigma_R$ converges to a complete connected $\Grp[g]$-equivariant self-expander $\Sigma_{\infty}$ smoothly of multiplicity 1. Moreover, $\Sigma_{\infty}$ is asymptotic to $\Ccal[g]$ and $g(\Sigma_{\infty}) \leq g$. Therefore, $\Sigma[g]$ has 3 ends. By choosing $R' > 0$ suitably large, $\Sigma_{\infty} \cap B_{R'}$ is properly embedded and has 3 boundary components. Applying Riemann-Hurwitz formula exactly the same as  \eqref{Riemann-Hurwitz formaula for Sigma_R}, we have $g(\Sigma_{\infty}\cap B_{R'}) = g$ and thus $g(\Sigma_{\infty}) = g$.
\end{proof}

\section{Behavior for high genus}
Recall that we define the rotationally symmetric triple cone $\Ccal_{\epsilon}$ in the introduction \eqref{eq:Cepsilon} and we have the following corollary.
\begin{corollary} \label{INfinitely many self-expanders}
For any $\epsilon > \delta_0$, there is a sequence of self-expanders $\{ \Sigma[g, \epsilon]\}_{g \in \mathbb{N}}$ such that the following hold.
\begin{enumerate}[(i)]
    \item $\Sigma[g,\epsilon]$ is asymptotic to $\Ccal_{\epsilon}$.
    \item $\Sigma[g, \epsilon]$ is connected and $g(\Sigma[g]) = g$.
    \item $\Sigma[g, \epsilon]$ has $\Grp[g]$-symmetry.
\end{enumerate}
\end{corollary}

%\begin{proof}
 %   Fix $ \epsilon > \delta_0$, for each $g \in \mathbb{N}$, we just let $\Sigma[g,\epsilon]$ to be the $\Grp[g]$-equivariant self-expander asymptotic to $\Ccal_{\epsilon}$ constructed in Theorem \ref{the main result of the paper}.
%\end{proof}

Now we prove that by passing to a subsequence, the family $\{ \Sigma[g, \epsilon]\}_{g \in \mathbb{N}}$ will converge to the union of the hyperplane $P_{xy}$ and a self-expander annulus that is asymptotic to the double cone $\Ccal(\epsilon)$ as $g \to \infty$. For notational convenience, we may omit $\epsilon$ and just write $\Sigma[g]$ for $\Sigma[g, \epsilon]$ when $\epsilon$ is clear from the context.

\begin{proposition}\label{Behavior for high genus}
For any $\epsilon > \delta_0$, as $g\to\infty$, the sequence of self-expanders $\{ \Sigma[g, \epsilon]\}_{g \in \mathbb{N}}$ constructed in Corollary \ref{Infintely many expanders coming from the same cone} (by passing to a subsequence) converges to the union of the hyperplane and a self-expander annulus that is asymptotic to $\Ccal(\epsilon)$. The convergence is locally smooth away from the intersecting circle and is of multiplicity $1$.
\end{proposition}

\begin{proof}
Fix $\epsilon > \delta_0$, recall that $\Sigma[g]=\Sigma[g, \epsilon]$ has the quadratic volume growth by Proposition \ref{Convergence to the limit self-expander} and thus
\begin{equation} \label{quadratic area growth 1}
    \mathcal{H}^2(\Sigma[g] \cap B_R) \leq \frac{\mathcal{H}^1(\mathcal{L}[\Ccal_{\epsilon}])}{2} R^2%\leq \frac{\mathcal{H}^1(\mathcal{L}[\Ccal_{\epsilon}])}{2} R^2 e^{\frac{R^2}{4}}
\end{equation}
for any $R > 0$. Allard's compactness theorem \cite{Allard1972Onthe} then implies the existence of a subsequence which converges to some stationary integral varifold $V=V_{\epsilon}$ with respect to the conformal metric $g_{Exp}$ as $g \to \infty$.

Fix any $R > 0$ and $x \in B_R$. In the following, we understand all geometric quantities with respect to the conformal metric $g_{Exp}$ in $\mathbb{R}^3$. Let $A_g$ denote the second fundamental form of the self-expander $\Sigma[g]$, the $\epsilon$-regularity theorem of Choi-Schoen \cite[Proposition 2]{choi1985space} states that there exists $\epsilon_0 > 0$ such that if $r \leq \epsilon_0$ and if
\begin{equation*}
    \int_{\Sigma[g] \cap B_r(x)} |A_g|^2 \, d\mu \leq \epsilon_0,
\end{equation*}
then there exists a constant $C$ which depends only on the background geometry and can hence be chosen uniformly with respect to the genus $g$ such that
\begin{equation*}
    \max_{0 \leq \sigma \leq r} \sigma^2 \sup_{B_{r-\sigma}(x)}|A_g|^2 \leq C.
\end{equation*}
Complementary to the set of points where this result applies, we consider as in \cite[(4.26)]{ketover2016free} (or see \cite[(3.2)]{Buzano2025Noncompact}):
\begin{equation*}
  \Upsilon^R : = \bigl\{x \in B_R : \inf_{r > 0}\left(\liminf_{g \to \infty} \int_{\Sigma[g] \cap B_r(x)} |A_g|^2 \, d\mu)  \right) \geq \epsilon_0 \bigr\}.
\end{equation*}
The $\Grp[g]$-equivariance of $\Sigma[g]$ implies that $\Upsilon^R$ is rotationally symmetric around the z-axis $l_z$. Clearly, we have $\Upsilon^{R_1} \subset \Upsilon^{R_2}$ whenever $0 < R_1 \leq R_2$. Also note that for any $x_0 \in B_R \backslash \Upsilon^R$, there exists $r > 0$ such that the second fundamental form $A_g$ of $\Sigma[g] \cap B_r(x_0)$ is bounded uniformly for all $g \in \mathbb{N}$. This then implies that the convergence of $\Sigma[g] \to V$ is smooth in $B_r(x_0)$. 

Now observe that $\Sigma[g]$ has the dihedral symmetry $\mathbf{D}_{g + 1}$ (recall \eqref{eq:dihedral}). Thus, the following claims follow verbatim from \cite[Claim 1, 2, 3, 4 of Theorem 1.3]{Buzano2025Noncompact} and hold for any $R > 0$ sufficiently large.
\begin{enumerate}[{\textbf{Claim }}1{:}]
    \item supp$||V||$, i.e., the support of $V$, contains the hyperplane $P_{xy}$.
    \item For any $x_0 \in \Upsilon^{R} \backslash l_z$, the genus of $\Sigma[g]$ restricted to any neighborhood of $x_0$ is unbounded as $g \to \infty$.
    \item $\Upsilon^R \subset P_{xy} \cup l_z$. Moreover, $(\Upsilon^R \cap P_{xy}) \backslash l_z$ consists of at most one circle around $l_z$ .
    \item The limit supp$||V|| \cap B_R$ is smooth and embedded away from $\Upsilon^R \cap P_{xy}$. And $\Upsilon^R \cap l_z$ is discrete.
\end{enumerate}
Now we need to show $\Upsilon^R \cap P_{xy} \backslash l_z$ is nonempty. For expanders, we do not have any analogues of the Frankel property and Brakke's regularity theorem \cite{Brakke1978Motion} (cf. \cite[p.54]{Colding2013Round}) of shrinkers. Thus we cannot apply the same arguments as in \cite[Claim 5 of Theorem 1.3]{Buzano2025Noncompact}.
\begin{lemma}
    If $R > 0$ is sufficiently large, $\Upsilon^R \cap P_{xy}$ contains a circle $\uC$ with positive radius.
\end{lemma}
\begin{proof}
Along with \textbf{Claim }3, for a contradiction, suppose that $\Upsilon^R \cap P_{xy} \subset \{0\}$ for all $R$. Then \textbf{Claim} 4 above implies that $\Upsilon^R \subset l_z$ is discrete for all $R$. Hence, by \textbf{Claim} 4 again, $\supp ||V|| \backslash \{0\}$ is a smooth, embedded self-expander. Being rotationally symmetric, each connected component of $\supp ||V|| \backslash P_{xy}$ that has $0$ in its closure has finite Euler characteristic. Hence, by the removable singularities theorem \cite[Proposition 1]{choi1985space}, each of these components extends smoothly to the origin, lies on one side of $P_{xy}$ (because $\supp ||V||$ is embedded away from $P_{xy}$) and touches $P_{xy}$ only in the origin. This is impossible by the strong maximum principle, and therefore no such components exist. As a result, $\supp ||V||$ is smoothly embedded and contains the hyperplane $P_{xy}$. 

Since $\Upsilon^R \subset l_z$ is discrete for all $R > 0$, the convergence of $\Sigma[g] \to V$ is smooth away from finitely many points in each closed ball $B_R$. Thus, $\supp ||V||$ is asymptotic to $\Ccal_{\epsilon}$, rotationally symmetric and invariant under the reflection $\UUU[c]$ for any $c \in \mathbb{R}$ (recall \ref{symmetry 2}). And hence, we have
\[
\supp ||V|| = P_{xy} \cup \Lambda_{\epsilon}^1 \cup \Lambda_{\epsilon}^2,
\]
where by Lemma \ref{graphical self-expander}, $\Lambda_{\epsilon}^1$ ($\Lambda_{\epsilon}^2$) is the unique expander asymptotic to the single cone $\Ccal(\epsilon) \cap \{z \geq 0\}$ ($\Ccal(\epsilon) \cap \{z \leq 0\}$) and strictly lies above (below) $P_{xy}$.

By Lemma \ref{graphical self-expander} again, $\Lambda_{\epsilon}^1$, $\Lambda_{\epsilon}^2$ are both graphical. By L'Hôpital's rule and coarea formula, we have
\begin{equation} \label{lower bound for local area}
    \begin{split}
    \lim_{R \to \infty} \frac{\mathcal{H}^2(\supp ||V|| \cap B_R)}{R^2} & = \lim_{R \to \infty} \frac{\int_{\supp ||V|| \cap \partial B_R} \frac{1}{|\nabla_{\supp ||V||}|x|| }\, d\mathcal{H}^1}{2R} \\
    & \geq \lim_{R \to \infty} \frac{\mathcal{H}^1(\supp ||V|| \cap \partial B_R)}{2R} \\
    & = \lim_{R \to \infty} \frac{\mathcal{H}^1(\Ccal_{\epsilon} \cap \partial B_R)}{2R} \cdot \frac{\mathcal{H}^1(\supp ||V|| \cap \partial B_R)}{\mathcal{H}^1(\Ccal_{\epsilon} \cap \partial B_R)} \\
    & = \frac{1}{2} \mathcal{H}^1(\Ccal_{\epsilon} \cap \partial B_1) = \frac{1}{2} \mathcal{H}^1(\mathcal{L}[\Ccal_{\epsilon}]),
\end{split}
\end{equation}
where the last equality follows from
\[
\lim_{R \to \infty} \frac{\mathcal{H}^1(\supp ||V|| \cap \partial B_R)}{\mathcal{H}^1(\Ccal_{\epsilon} \cap \partial B_R)} = 1,
\]
since $\supp ||V||$ is asymptotic to $\Ccal_{\epsilon}$. On the other hand, by \eqref{quadratic area growth 1}, $V$ also has the quadratic area growth: for any $R > 0$,
\begin{equation} \label{quadratic area growth 2}
    ||V||(B_R) \leq \frac{\mathcal{H}^1(\mathcal{L}[\Ccal_{\epsilon}])}{2} R^2.
\end{equation}

By Constancy theorem \cite[Theorem 41.1]{simon1984lectures}, $V = n_0 P_{xy} \cup n_1 \Lambda_{\epsilon}^1 \cup n_2 \Lambda_{\epsilon}^2$ where $n_0, n_1, n_2 \in \mathbb{N}$ are the multiplicities of each component. Combining with \eqref{quadratic area growth 2}, we have
\begin{equation} \label{quadratic area growth 4}
\begin{split}
  \lim_{R \to \infty} \frac{\mathcal{H}^2(\supp ||V|| \cap B_R)}{R^2} & =  \lim_{R \to \infty} \frac{\mathcal{H}^2(P_{xy} \cap B_R)}{R^2} + \frac{\mathcal{H}^2(\Lambda_{\epsilon}^1 \cap B_R)}{R^2} + \frac{\mathcal{H}^2(\Lambda_{\epsilon}^2 \cap B_R)}{R^2} \\
  & \leq \lim_{R \to \infty} n_0 \frac{\mathcal{H}^2(P_{xy} \cap B_R)}{R^2} + n_1 \frac{\mathcal{H}^2(\Lambda_{\epsilon}^1 \cap B_R)}{R^2} + n_2 \frac{\mathcal{H}^2(\Lambda_{\epsilon}^2 \cap B_R)}{R^2} \\
  & =  \lim_{R \to \infty} \frac{||V||(B_R)}{R^2} \leq \frac{\mathcal{H}^1(\mathcal{L}[\Ccal_{\epsilon}])}{2}.
\end{split}
\end{equation}
\eqref{lower bound for local area} then implies $n_0, n_1, n_2 = 1$ and
\begin{equation} \label{quadratic area growth 5}
    \lim_{R \to \infty} \frac{\mathcal{H}^2(\supp ||V|| \cap B_R)}{R^2} = \lim_{R \to \infty} \frac{||V||(B_R)}{R^2} = \frac{1}{2} \mathcal{H}^1(\mathcal{L}[\Ccal_{\epsilon}]).
\end{equation}
As a result, the convergence of $\Sigma[g]$ to $\supp ||V||$ is smooth away from finitely many points and of multiplicity $1$ in each closed ball $B_R$. By Allard's regularity theorem \cite{Allard1972Onthe}, the convergence is locally smooth everywhere. 

Now fix $R_0 > 1$. Due to the smooth convergence of multiplicity $1$, for all $g \in \mathbb{N}$ sufficiently large, $\Sigma[g] \cap B_{R_0}$ has the same topology as $\supp ||V|| \cap B_{R_0}$, which consists of three disjoint topological disks. Fix such $g \in \mathbb{N}$ sufficiently large, since $\Sigma[g]$ is asymptotic to $\Ccal_{\epsilon}$, there exists $R_g > R_0$ appropriately large such that $\Sigma[g] \cap B_{R_g}$ is properly embedded with boundary consists of $3$ simple closed curves, and $g(\Sigma[g] \cap B_{R_g}) = g$. Consider the surface $\Theta_g : = \Sigma[g] \cap (B_{R_g} \backslash B_{R_0})$. Then $\Theta_g$ has $6$ boundary components and $g(\Theta_g) = g$.  The quotient surface $\Theta_g / \mathbf{C}_{g + 1}$ is a compact Lipschitz surface with $6$ boundary components and no branch points as $\Theta_g \cap l_z = \emptyset$ (recall \eqref{eq:cyclic}). Now by the Riemann-Hurwitz formula as \cite[Lemma B.1]{carlotto2020free}, we have
\begin{equation}
\begin{split} \label{RH formula 1}
    2- 2g - 6 & = \chi(\Theta_g)\\
    & = (g+1) \chi(\Theta_g / \mathbf{C}_{g + 1}) \\
    & = (g+ 1) (2 - 2g(\Theta_g / \mathbf{C}_{g + 1}) - 6) \\
\end{split}
\end{equation}
After simplifying (\ref{RH formula 1}), we obtain
\begin{equation} \label{RH formula 2}
    (g+ 1) (g(\Theta_g / \mathbf{C}_{g + 1}) + 2) = g + 2.
\end{equation}
We arrive at a contradiction since there is no positive integer solution for \eqref{RH formula 2}.
\end{proof}
Now by \textbf{Claim} 3 above, $ \uC := \Upsilon^R \backslash l_z$ is exactly one circle centered at the origin. Now  \cite[Claim 6 of Theorem 1.3]{Buzano2025Noncompact} gives the following:
\begin{enumerate}[(i)]
    \item $\Upsilon^R = \uC$ for all $R > 0$ sufficiently large.
    \item The blow-up of $\supp ||V||$ around any point $x_0 \in \uC$ is given by two orthogonal planes.
    \item $\supp ||V|| = P_{xy} \cup \uS$ where $\uS$ is a smooth, embedded rotationally symmetric self-expander intersecting the plane $P_{xy}$ orthogonally in $\uC$.
\end{enumerate}
Thus, the profile $\uS \cap \{(x, 0, z) : x \geq 0\}$ is a smooth, connected curve intersecting the $x$-axis exactly once. In particular, it cannot be closed. Therefore, $\uS$ has genus zero. Since the convergence of $\Sigma[g] \to V$ is smooth away from $\uC$, $\supp ||V|| = P_{xy} \cup \uS$ is also asymptotic to $\Ccal_{\epsilon}$. As a result, $\uS$ must be a self-expander annulus that is asymptotic to the double cone $\Ccal(\epsilon)$.

Now it remains to show that the convergence is of multiplicity $1$. By Constancy theorem \cite[Theorem 41.1]{simon1984lectures}, $V = n_0 P_{xy} \cup n_1 \uS$ where $n_0, n_1 \in \mathbb{N}$ are the multiplicities of each component. Proceeding the same as \eqref{lower bound for local area}-\eqref{quadratic area growth 5}, we have $n_0, n_1 = 1$ and the proof is complete.
\end{proof}

\begin{remark}
The limit self-expander annulus that is asymptotic to the double cone $\Ccal(\epsilon)$ may depend on the subsequence we choose, i.e., for different subsequences of $\{ \Sigma[g, \epsilon]\}_{g \in \mathbb{N}}$, the self-expander annuli contained in their limits may vary. This is due to the non-uniqueness of the self-expander annulus that is asymptotic to the double cone $\Ccal(\epsilon)$.
\end{remark}

\appendix
\section{Proof of Proposition \ref{Convergence to the limit self-expander} and \ref{Equivariant version of the minimzing problem}}
% Change theorem numbering to A.1, A.2, ...
\renewcommand{\thetheorem}{\Alph{section}.\arabic{theorem}}
We include the proof for Proposition \ref{Convergence to the limit self-expander} and \ref{Equivariant version of the minimzing problem} in this section.

\begin{proposition}[Proposition \ref{Convergence to the limit self-expander}]\label{appendix theorem for the convergence to the limit complete self-expander}
Let $\Ccal \subset \mathbb{R}^3$ be a smooth embedded cone. Suppose that for all sufficiently $R > 0$, there exists a properly embedded self-expander $\Sigma_R \subset B_R$ satisfying the following.
\begin{enumerate}[(i)]
    \item $\Sigma_R$ is connected.
    \item $g(\Sigma_R)$ is uniformly bounded above by $C$. 
    \item $\partial \Sigma_R = R \mathcal{L}[\Ccal]$.
\end{enumerate}
Then there is a complete embedded self-expander $\Sigma_{\infty} \subset \mathbb{R}^3$ such that after passing to subsequences, $\Sigma_R$ converges to $\Sigma_{\infty}$ smoothly with multiplicity 1, and $\Sigma_{\infty}$ is asymptotic to $\Ccal$. Moreover, the following hold.
\begin{enumerate}[(i)]
    \item $\Sigma_{\infty}$ is connected.
    \item $g(\Sigma_{\infty}) \leq C$.
    \item $\Sigma_{\infty}$ has the quadratic volume growth: $\mathcal{H}^2(\Sigma_{\infty} \cap B_R) \leq \frac{\mathcal{H}^1(\mathcal{L}[\Ccal])}{2} R^2$ for any $R > 0$.
\end{enumerate}
\end{proposition}
\begin{remark}
The proof of Proposition \ref{Convergence to the limit self-expander} above is a slight modification of  the proofs of \cite[Lemma 8.2]{Bernstein2023Integer} and \cite[Theorem 6.3]{ding2020minimal}. We adapt the same barriers constructed by \cite[Lemma 8.2]{Bernstein2023Integer}. The assumption on the genus upper bound is merely to guarantee the compactness of the sequence since we do not have the area-minimizing property as in \cite[Lemma 8.2]{Bernstein2023Integer} and \cite[Theorem 6.3]{ding2020minimal}. Once we establish the existence of the limit self-expander $\Sigma_{\infty}$, the multiplicity 1 smooth convergence follows verbatim as in \cite[Theorem 6.3]{ding2020minimal}, and that $\Sigma_{\infty}$ is asymptotic to $\Ccal$ follows the same as in \cite[Lemma 8.2]{Bernstein2023Integer}.
\end{remark}
\begin{proof}
The proof is proceeded by a barrier argument. Let us first give the construction of barriers.

Consider the following family of functions depending on parameters $\mathbf{v} \in \mathbb{S}^2, \eta > 0, h \geq 0$:
\[
f_{\mathbf{v}, \eta, h}(\mathbf{x}) := 4 + |\mathbf{x}|^2 - (1+ \eta^{-2})(\mathbf{x} \cdot \mathbf{v})^2 + h.
\]

We observe that the connected closed set:
\[
E_{\mathbf{v}, \eta, h} := \{ \mathbf{x} \in \mathbb{R}^3 : f_{\mathbf{v}, \eta, h}(\mathbf{x}) \leq 0 \text{ and } \mathbf{x} \cdot \mathbf{v} \geq 0\} 
\]
has boundary asymptotic to the connected, rotationally symmetric cone:
\[
\Ccal_{\mathbf{v}, \eta} := \{|\mathbf{x}|^2 = (1 + \eta^{-2})(\mathbf{x} \cdot \mathbf{v})^2, \mathbf{x} \cdot \mathbf{v} \geq 0 \}
\]
which lies in the half space $\{\mathbf{x} \cdot \mathbf{v} \geq 0\}$ and has axis parallel to $\mathbf{v}$ and cone angle $\eta^{-1}$. Furthermore, $E_{\mathbf{v}, \eta, h}$ lies entirely in the component of $\mathbb{R}^3 \backslash \Ccal_{\mathbf{v}, \eta}$ that contains $\mathbf{v}$. By construction, $f_{\mathbf{v}, \eta, h} > 0$ on $\{|\mathbf{x} \cdot \mathbf{v}| < \eta \sqrt{4 + h} \, \}$ and so
\begin{equation} \label{E_p lies above the plane vertical to v}
    E_{\mathbf{v}, \eta, h} \cap \{\mathbf{x} \cdot \mathbf{v} < \eta \sqrt{4 + h} \, \} = \emptyset.
\end{equation}
A straightforward computation gives that on any self-expander, say $\Sigma$:
\begin{equation} \label{differential inequality satisfied by f}
    \Delta_{\Sigma} f_{\mathbf{v}, \eta, h} + \frac{1}{2} \mathbf{x} \cdot \nabla_{\Sigma}f_{\mathbf{v}, \eta, h} - f_{\mathbf{v}, \eta, h} = -2(\eta^{-2} + 1)|\nabla_{\Sigma}(\mathbf{x} \cdot \mathbf{v})|^2 - h \leq 0.
\end{equation}

Now it is ready to check that for each $p \in \mathcal{L}[\Ccal]$ and choice of normal $N(p)$ at $p$, there are $\eta(p) >0, h(p) >0, \mathbf{v}_{\pm}(p) \in \mathbb{S}^2$ so that $E_p := E_{\mathbf{v}_+(p), \eta(p), h(p)} \cup E_{\mathbf{v}_-(p), \eta(p), h(p)}$ satisfies $E_p \cap \Ccal = \emptyset$, $\mathcal{L}[\Ccal_{\mathbf{v}_{\pm}, \eta}]$ lies on the $\pm N$ side of $\mathcal{L}[\Ccal]$ and touches $\mathcal{L}[\Ccal]$ only at $p$.  Now we claim that $\Sigma_R \cap E_p = \emptyset$ for any $p \in \mathcal{L}[\Ccal]$.

Indeed, fix $p\in  \mathcal{L}[\Ccal]$. Since $E_{p} \cap \Ccal = \emptyset$ , for any $p^{\prime} \in \partial \Sigma_R \cap \{\mathbf{x} \cdot \mathbf{v}_+ \geq 0 \}$, we have
\begin{equation}
    f_{\mathbf{v}_+, \eta, h}(p^{\prime}) > 0,
\end{equation}
where we use notation $v_{\pm} := v_{\pm}(p), \eta := \eta(p), h:= h(p)$. Clearly, $f_{\mathbf{v}_{+}, \eta, h} > 0$ holds on $\{\mathbf{x} \cdot \mathbf{v}_{+} = 0 \}$. Since $\partial (\Sigma_R \cap \{\mathbf{x} \cdot \mathbf{v}_{ +} \geq 0 \}) \subset \{\mathbf{x} \cdot \mathbf{v}_{+} = 0 \} \cup (\partial \Sigma_R \cap \{\mathbf{x} \cdot \mathbf{v}_{+} \geq 0 \})$, by \ref{differential inequality satisfied by f}, the strong maximum principle implies that $f_{\mathbf{v}_{+}, \eta, h} > 0$ on $\Sigma_R \cap \{\mathbf{x} \cdot \mathbf{v}_{+} \geq 0 \}$. Thus, $E_{\mathbf{v}_+, \eta, h} \cap \Sigma_R = E_{\mathbf{v}_+, \eta, h} \cap (\Sigma_R \cap \{\mathbf{x} \cdot \mathbf{v}_{+} \geq 0 \}) = \emptyset$ since $E_{\mathbf{v}_+, \eta, h} \subset \{\mathbf{x} \cdot \mathbf{v}_{+} \geq 0 \}$. Replacing $\mathbf{v}_+$ by $\mathbf{v}_-$ and proceeding the same as above, we can show that $E_p \cap \Sigma_R = \emptyset$ for all $p \in \mathcal{L}[\Ccal]$ and the claim is shown.
\begin{figure}[H]
\centering
\begin{tikzpicture}
\filldraw[color=black!60, fill=black!5, very thick] (0.436, 4.981) .. controls (0.02588, 0.09659)  .. (2.113, 4.5315);
\draw[color=black!60, fill=black!5, very thick] (-0.436, 4.981) .. controls (-0.02588, 0.09659)  .. (-2.113, 4.5315);
\draw (0, 0) -- (2.5, 4.443);
\draw (0, 4)[draw=red, thick] arc[start angle=90, end angle=85, radius=4];
\node[font=\tiny] at(0.17448, 4.2){\textcolor{red}{$\gamma_r$}};
\node at (3.1, 4){$\Ccal_{\mathbf{v}_+, \eta}$};
\node at (-3, 4){$\Ccal_{\mathbf{v}_-, \eta}$};
\draw (0, 0) -- (-2.5, 4.43);
\filldraw[black] (0,1) circle (1pt) node[anchor=east]{$p$};
\filldraw[red] (0,4.5) circle (1pt) node[anchor=south]{$Rp$};
\node at (0.85, 3.3){$E_{\mathbf{v}_+, \eta, h}$};
\node at (-0.85, 3.3){$E_{\mathbf{v}_-, \eta, h}$};
\node at (0.6, 0.2){\textcolor{red}{$\Sigma_R$}};
\node at (0, 5.2){$\Ccal$};
\draw[draw=red, very thick] (0, 4.5) .. controls (0.1, 0) .. (.5, 0);
\draw (0, 0) -- (0, 5);
\end{tikzpicture}
 \caption{$\Sigma_R$ being disjoint from $E_p$}
  \label{fig:Barriers}
\end{figure}
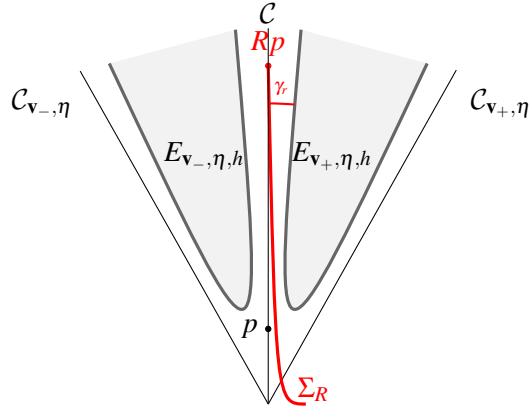
Since $\mathcal{L}[\Ccal]$ is compact, we can find:
\[
 \eta_{max} := \max_{p \in \mathcal{L}[\Ccal]}\eta (p) > 0, \,h_{max} := \max_{p \in \mathcal{L}[\Ccal]} h(p) > 0.
\]
Notice that $E_{\mathbf{v}, \eta_1, h_1} \subset E_{\mathbf{v}, \eta_2, h_2}$ holds whenever $\eta_1 \geq \eta_2, h_1 \geq h_2$ holds. Thus, we redefine $E_p := E_{\mathbf{v}_+(p), \eta_{max}, h_{max}} \cup E_{\mathbf{v}_-(p), \eta_{max}, h_{max}}$ and $E_p \cap \Sigma_R = \emptyset$ still holds for any $p \in \mathcal{L}[\Ccal]$.

Let $\{ \mathcal{L}_1, \dots, \mathcal{L}_m \}$ be the collection of all the components of $\mathcal{L}[\Ccal]$. Fix $i \in \{1, \dots, m\}$, we define $D_i := \bigcup_{p \in \mathcal{L}_i} E_p$. $D_i$ consists of 2 domains $D_i^{\pm}$, which lies on the $\pm N$ side of $\mathcal{L}_i$. Take $R_0 = \eta_{max} \sqrt{4 + h_{max}} + 1$, observe that  $\partial D_i^{+} \backslash B_{R_0}$ has 2 components and we denote by $T_i^+$ the component that has closer Hausdorff distance to $\{r \mathcal{L}_i : r > 0 \}$. We do the same for $\partial D_i^{-} \backslash B_{R_0}$ and pick $T_i^-$ to be the component  closer to $\{r \mathcal{L}_i : r > 0 \}$. Then we have:
\begin{equation}
    T_i^{\pm} = \bigcup_{r \geq R_0} \partial N_{l(r)}(r \mathcal{L}_i)
\end{equation}
where $N_{l_i(r)}(r \mathcal{L}_i)$ is the $l(r)$-tubular neighborhood of $r\mathcal{L}_i$ on the sphere $\partial B_r$ and $l(r)$ is the distance between $\partial B_r \cap T_i^+$ and $r\mathcal{L}_i$ on $\partial B_r$. Now we give the explicit expression of $l(r)$. Let $p \in \mathcal{L}_i$, $P$ be the plane spanned by $\mathbf{x}(p), \mathbf{v}_+(p)$, and $q$ be the intersection point of $P$ and $T_i^+$. Then $l(r)$ is just the length of the geodesic segment $\gamma_r \subset P \cap \partial B_r$ between $rp$ and $q$. By direct calculation, we have
\begin{equation}
    l(r) = (\arctan(\frac{1}{\eta_{max}}) - \arctan \left(\sqrt{\frac{r^2(1 + \eta_{max}^{-2})}{4 + h_{max} + r^2} -1} \right))r = O(\frac{1}{r}) \text{ as }r \to \infty,
\end{equation}
and hence
\begin{equation}
    \lim_{r \to \infty} l(r) = 0.
\end{equation}
Consequently, we have
\begin{equation} \label{Hausdorff convergence of sigma R}
    d_{H}(\Sigma_R \cap \partial B_r, r\mathcal{L}[\Ccal]) \leq 2 l(r) \text{ for all }R_0 < r < R,
\end{equation}
where $d_H(A, B) : = \max \{\max_{x \in A} d(x, B), \max_{x \in B} d(x, A) \}$ is the Hausdorff distance between any two sets $A, B \subset \mathbb{R}^3$ and $d$ is the distance on $\mathbb{R}^3$ with the standard Euclidean metric.
Now we have finished the construction of barriers $\bigsqcup_{i = 1}^m T_i^{\pm}$. We will show below that  $\{\Sigma_R\}_{R > 0}$ will limit to a self-expander $\Sigma_{\infty}$ and $\Sigma_{\infty}$ is asymptotic to $\Ccal$. 

By construction, $\Sigma_R \cap \bigcup_{p \in \mathcal{L}[\Ccal]} E_p = \emptyset$, and hence for any $R > \tilde{R} > R_0$, $\Sigma_R \backslash B_{\tilde{R}} \subset \bigcup_{r > \tilde{R}}(\bigsqcup_{i = 1}^m N(l_i(r))(r \mathcal{L}_i))$ has at least $m$ components. We claim that $\Sigma_R \backslash B_{\tilde{R}}$ has exactly $m$ components. Indeed, consider the function $u(\mathbf{x}) : = |\mathbf{x}|^2 + 4$. On any self-expander $\Sigma$, we have
\begin{equation} \label{inequality satisfied by u}
    \Delta_{\Sigma} u + \frac{1}{2} \mathbf{x} \cdot \nabla_{\Sigma}u - u = 0.
\end{equation}
Suppose $\Sigma_R \backslash B_{\tilde{R}}$ has more than $m$ components. Then there must exist one component $\Xi_R \subset \Sigma_R \backslash B_{\tilde{R}}$ such that $\partial \Xi_R \subset \partial B_{\tilde{R}}$. Thus, $u$ achieves its maximum in the interior of $\Xi_R$, contradicting the strong maximum principle. We have then arrived at the claim. As a result, since $\Sigma_R$ is connected, $\Sigma_R \cap B_{R^{\prime}}$ must be nonempty and connected for all $R > \tilde{R} > R_0$.

Now we give the local area bound on $\Sigma_R$. Equation \ref{eq:Laplacian} implies that $\Delta_{\Sigma_R} |X|^2 = 4 + 4H_{\Sigma_R}^2 \geq 4$, and hence
\begin{equation} \label{eq:local area bound}
    4 \mathcal{H}^2(\Sigma_R) \leq \int_{\Sigma_R} \Delta_{\Sigma_R} |X|^2 \,  \leq 2 \int_{\partial \Sigma_R} |X| \leq 2 R  \mathcal{H}^1(\partial \Sigma_R) = 2R^2 \mathcal{H}^1(\mathcal{L}[\Ccal]) .
\end{equation}
Applying the monotonicity formula \ref{Monotonicity Formula}, \eqref{eq:local area bound} implies that
\begin{equation} \label{Density upper bound from the Monotonicity formula}
\begin{split}
    \frac{\mathcal{H}^2(\Sigma_R \cap B_r)}{r^2} & \leq \frac{\mathcal{H}^2(\Sigma_R \cap B_R)}{R^2} \\
    & = \frac{\mathcal{H}^2(\Sigma_R)}{R^2} \\
    & \leq \frac{1}{2} \mathcal{H}^1(\mathcal{L}[\Ccal])
\end{split}
\end{equation}
for all $0 < r  < R$. Thus,  for any $R > \tilde{R} > R_0$, we have 
\begin{equation}
    \mathcal{H}^2_w(\Sigma_R \cap B_{\tilde{R}}) \leq  e^{\frac{\tilde{R}^2}{4}} \mathcal{H}^2(\Sigma_R \cap B_{\tilde{R}}) \leq \frac{1}{2} \tilde{R}^2 e^{\frac{\tilde{R}^2}{4}} \mathcal{H}^1(\mathcal{L}[\Ccal]) = C_1(\tilde{R}, \Ccal)
\end{equation}
where $C_1(\tilde{R}, \Ccal)$ is a constant only depending on $\tilde{R}$  and the cone $\Ccal$. Thus, $\Sigma_R$ has uniformly  bounded genus and weighted area in the closed ball $B_{\tilde{R}}$. By Ilmanen's \cite{ilmanen1998lectures} localized Gauss-Bonnet argument, we have
\[
\sup_{R > \tilde{R}}\int_{\Sigma_R \cap B_{\frac{\tilde{R}}{2}}} |A|^2   \leq C_2(\tilde{R}, g, C),
\]
where $A$ is the second fundamental form of $\Sigma_R \cap B_{\tilde{R}}$ in $B_{\tilde{R}}$ with respect to the conformal metric $g_{Exp}$ \eqref{eq:gexp}. By Choi and Schoen's curvature estimates \cite{choi1985space}, there is an embedded self-expander $\Sigma_{\infty} \subset \mathbb{R}^3$ such that after passing to subsequences, in each closed ball $B_{\frac{\tilde{R}}{2}}$, $\{\Sigma_R\}_{R > \tilde{R}}$ converges to $\Sigma_{\infty}$ (possibly with multiplicity) smoothly away from finitely many points. Since $\Sigma_R \cap B_{R^{\prime}}$ is connected for any $R > R^{\prime} > R_0$, $\Sigma_{\infty} \cap B_{R^{\prime}}$ must be connected for any $R^{\prime} > R_0$. Thus, $\Sigma_{\infty}$ is connected. By Constancy theorem \cite[Theorem 41.1]{simon1984lectures}, the convergence of $\{\Sigma_R\}_{R > \tilde{R}}$  to $\Sigma_{\infty}$ is of multiplicity $k$ for some integer $k \in \mathbb{Z}^+$. Thus, for any $r > 0$, we have
\[
\lim_{R \to \infty}d_H(\Sigma_R \cap \partial B_r, \Sigma_{\infty} \cap \partial B_r) = 0.
\]
Combing with \eqref{Hausdorff convergence of sigma R}, for any $R > r > R_0$, we obtain:
\begin{equation} \label{eq:Asymptotic conical behavior}
\begin{split}
    d_H(r  \mathcal{L}[\Ccal], \Sigma_{\infty} \cap \partial B_r) & \leq d_{H}(\Sigma_R \cap \partial B_r, r\mathcal{L}[\Ccal]) + d_H(\Sigma_R \cap \partial B_r,\Sigma_{\infty} \cap \partial B_r) \\
    & \leq 2l(r) + d_H(\Sigma_R \cap \partial B_r,\Sigma_{\infty} \cap \partial B_r).
\end{split}
\end{equation}
Let $R \to \infty$ in \eqref{eq:Asymptotic conical behavior} above, we then have:
\begin{equation} \label{Hausdorff convergence of Sigma infinity}
    d_H(r  \mathcal{L}[\Ccal], \Sigma_{\infty} \cap \partial B_r) \leq 2l(r) \to 0 \text{ as }r \to \infty.
\end{equation}
Combining (\ref{Hausdorff convergence of sigma R}),  (\ref{Hausdorff convergence of Sigma infinity}), one obtains that $\Sigma_{\infty}$ is $C^0$ asymptotic to $\Ccal$. Hence, we have
\[
\lim_{r \to \infty} \frac{\mathcal{H}^1(\Sigma_{\infty} \cap \partial B_r)}{\mathcal{H}^1(C \cap \partial B_r)} = 1
\]
By L'Hôpital's rule and the coarea formula, we have
\begin{equation} \label{Density lower bound}
\begin{split}
    \lim_{r \to \infty} \frac{\mathcal{H}^2(\Sigma_{\infty} \cap B_r)}{r^2} & = \lim_{r \to \infty} \frac{\int_{\Sigma_{\infty} \cap \partial B_r} \frac{1}{|\nabla_{\Sigma_{\infty}}|x|| }\, d\mathcal{H}^1}{2r} \\
    & \geq \lim_{r \to \infty} \frac{\mathcal{H}^1(\Sigma_{\infty} \cap \partial B_r)}{2r} \\
    & = \lim_{r \to \infty} \frac{\mathcal{H}^1(\Ccal \cap \partial B_r)}{2r} \cdot \frac{\mathcal{H}^1(\Sigma_{\infty} \cap \partial B_r)}{\mathcal{H}^1(\Ccal \cap \partial B_r)} \\
    & = \frac{1}{2} \mathcal{H}^1(\Ccal \cap \partial B_1) \cdot \lim_{r \to \infty} \frac{\mathcal{H}^1(\Sigma_{\infty} \cap \partial B_r)}{\mathcal{H}^1(\Ccal \cap \partial B_r)}\\
    & = \frac{1}{2} \mathcal{H}^1(\mathcal{L}[\Ccal]).
\end{split}
\end{equation}
On the other hand, by (\ref{Density upper bound from the Monotonicity formula}), we have
\begin{equation} \label{local area bound for expanders}
    k \frac{\mathcal{H}^2(\Sigma_{\infty}\cap B_{r})}{r^2} = \lim_{R \to \infty} \frac{\mathcal{H}^2(\Sigma_R \cap B_r)}{r^2}
    \leq \frac{1}{2} \mathcal{H}^1(\mathcal{L}[\Ccal])  \text{ for all } r > 0,
\end{equation}
which implies that  $k \lim_{r \to \infty} \frac{\mathcal{H}^2(\Sigma_{\infty} \cap B_r)}{r^2} \leq \frac{1}{2} \mathcal{H}^1(\mathcal{L}[\Ccal])$. Therefore, $k = 1$ and the convergence of $\Sigma_R$ to $\Sigma_{\infty}$ is smooth everywhere in each closed ball by Allard's regularity theorem \cite{Allard1972Onthe}. As genus cannot increase in the smooth convergence limit, the genus bound of $\Sigma_{\infty}$ follows. Plugging $k = 1$ in (\ref{local area bound for expanders}), the quadratic volume growth of $\Sigma_{\infty}$ then follows.
\end{proof}

 Now we prove Proposition \ref{Equivariant version of the minimzing problem}. Consider the closed ball $B_R$ in $\mathbb{R}^3$ equipped with the conformal metric $g_{Exp}$. $B_R$ is strictly convex under the conformal metric and has constant mean curvature given by $H \equiv 2e^{\frac{-R^2}{8}} (\frac{1}{R}+ \frac{R}{4})$. For notational convenience, from now on we abbreviate the finite symmetry group $\Grp[g]$ by $\Grp$  and the singular locus $S[g] = S_1[g] \cup S_2[g] \cup S_3[g]$ by $S = S_0 \cup S_1 \cup S_2$ (recall \eqref{eq:G} and (\ref{lem:S})). 

\begin{definition}{\cite[Definition 3.7]{ketover2016equivariant}} \label{G-equivariant}
 Let $G$ be any finite group acting by isometries on $(\mathbb{R}^3, g_{Exp})$. An isotopy $\Phi(t)$ is called \emph{$G$-equivariant} if $\Phi(t) = g^{-1} \circ \Phi(t) \circ g$ for all $t$ and $g \in G$. Likewise, a vector field $X$ is called \emph{$G$-equivariant} if $g^{*}(X) = X$ for all $g \in G$. 
 
 We say a varifold $V$ is \emph{$G$-stationary} in an open set $U \subset (\mathbb{R}^3, g_{Exp})$ if for any $G$-equivariant vector field $X$ compactly supported on $U$, the first variation $\delta V (X) = 0$. Any $G$-stationary varifold $V$ is a stationary varifold \cite[Lemma 3.8]{ketover2016equivariant}. 
 
 For a $G$-equivariant minimal surface $\Sigma$ in $U$, we say $\Sigma$ is \emph{$G$-stable} if the second variation of $\Sigma$ is nonnegative with respect to all $G$-equivariant isotopies in $U$. 
\end{definition}

\begin{notation}\label{G-isotopies}
    Let $G$ be any finite group acting by isometries on $(\mathbb{R}^3, g_{Exp})$. For open sets $U,W$ with $U \subset W \subset B_R$, we denote by $\mathbf{Js}(U, W)$ ($\mathbf{Js}_G(U, W)$) the set consisting of all smooth ($G$-equivariant) isotopies of $W$ which leave $W \backslash U$ fixed, i.e.,
    \begin{equation*}
        \mathbf{Js}(U, W)=\{\varphi\in C^{\infty}([0,1]\times W, W): \varphi|_{W\setminus U}\equiv \mathrm{id}\},
    \end{equation*}
    (and for $G$-equivariant isotopies respectively). We may omit $W$ and just write $\mathbf{Js}(U)$ ($\mathbf{Js}_{G}(U)$) when $W$ is $B_R$. Under this notation, $\mathbf{Js}(B_R)$ ($\mathbf{Js}_{G}(B_R)$) is just the set of all smooth ($G$-equivariant) isotopies of $B_R$.
\end{notation}

\begin{definition}\label{def:Gmini}
    Let $\Sigma \subset W$ be any embedded surface. If $\{\varphi^k \} \subset \mathbf{Js}(U, W)$ and
    \[
    \lim_{k \to \infty} \mathcal{H}^2_w(\varphi^k(1, \Sigma)) = \inf_{\varphi \in \mathbf{Js}(U, W)} \mathcal{H}^2_w(\varphi(1, \Sigma)),
    \]
    then we say $\{\varphi^k(1, \Sigma)\}^k$ is a \emph{minimizing sequence for Problem $(\Sigma, \mathbf{Js}(U, W))$}. If $\Sigma \subset W$ is a $G$-equivariant embedded surface and $\{\varphi^k_{G} \} \subset \mathbf{Js}_{G}(U, W)$ and
    \[
     \lim_{k \to \infty} \mathcal{H}^2_w(\varphi^k_{G}(1, \Sigma)) = \inf_{\varphi \in \mathbf{Js}_{G}(U, W)} \mathcal{H}^2_w(\varphi(1, \Sigma)),
    \]
we say $\{\varphi_{G}^k(1, \Sigma)\}^k$ is  a \emph{$G$-equivariant minimizing sequence for Problem $(\Sigma, \mathbf{Js}_{G}(U, W))$}.
\end{definition}

\begin{notation}
    For any $\mathbf{x} \in \mathbb{R}^3$, we define 
    \begin{equation*}
        \mathcal{AN}_r(\mathbf{x}) : = \{ \text{open annulus } B_t(\mathbf{x}) \backslash \overline{B_{\tau}(x)} \text{ where }0 < \tau < t < r \}.
    \end{equation*}
    %to be the collection of all annuli centered at $\mathbf{x}$ contained in $B_r(\mathbf{x})$.
\end{notation}

Now we give the proof of Proposition \ref{Equivariant version of the minimzing problem}. For illustrational convenience, we consider the ambient space to be $B_R$, the proof can be easily generalized to the case where the ambient space has piecewise smooth boundaries and nonnegative mean curvature.
\begin{proposition}[Proposition \ref{Equivariant version of the minimzing problem}]\label{prop:2}
    Let $\Sigma \subset B_R$ be a properly embedded $\Grp$-equivariant  surface with smooth boundary $\partial  \Sigma$ intersecting $S \cap \partial B_R$ transversally. Suppose that $\{\Delta^j := \varphi^j(1, \Sigma)\}_j$ is a $\Grp$-equivariant minimizing sequence for Problem $(\Sigma, \mathbf{Js}_{\Grp}(B_R))$ converging to a stationary varifold $V$. Then $V = \bigcup_{k = 1}^N \Gamma_k$ is a union of  connected disjoint embedded minimal surfaces $\Gamma_1, \dots, \Gamma_N \subset B_R$ with respect to the metric $g_{Exp}$, each with multiplicity 1. Moreover, the following hold.
    \begin{enumerate}[(i)]
     \item $\partial \Gamma_1 \cup \dots \cup \partial\Gamma_k = \partial \Sigma$.
     \item $\sum\limits_{k = 1}^N g(\Gamma_k) \leq g(\Sigma)$.
    \end{enumerate}
\end{proposition}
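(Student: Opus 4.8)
The proof combines three ingredients, all transplanted to the conformal metric $g_{Exp}$ on $B_R$: the Meeks--Simon--Yau regularity theorem for minimization in an isotopy class \cite{meeks1982embedded}, its genus-monotonicity refinement of De~Lellis--Pellandini \cite{de2010genus}, and Ketover's equivariant version of this machinery \cite{ketover2016equivariant}. Two structural facts about $(B_R,g_{Exp})$ make the theory applicable, and both are already available: $\partial B_R$ is strictly mean-convex in $g_{Exp}$ (it has constant mean curvature $2e^{-R^2/8}(\tfrac{1}{R}+\tfrac{R}{4})>0$), and $(B_R,g_{Exp})$ contains no closed minimal surface, since a closed minimal surface would be a closed self-expander, which is impossible: by \eqref{eq:Laplacian} one has $\Delta_{\Sigma}|X|^2\geq 4>0$ on any self-expander, so $|X|^2$ is strictly subharmonic and cannot attain an interior maximum on a closed one.

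\emph{Regularity of $V$ and conclusion (i).} First I would apply the Meeks--Simon--Yau theorem in the $\Grp$-equivariant category. Since $\{\Delta^j\}$ is a $\Grp$-equivariant minimizing sequence for Problem $(\Sigma,\mathbf{Js}_{\Grp}(B_R))$ converging to the stationary varifold $V$, the local replacement and $\gamma$-reduction arguments of \cite{meeks1982embedded}, carried out equivariantly as in \cite{ketover2016equivariant} (whose analysis of the behaviour near the $1$-dimensional axes $S_1[g]$ and the $2$-dimensional reflection planes $S_2[g]$ of $\Grp[g]$, recall Lemma~\ref{lem:S}, applies here because $\partial\Sigma$ meets $S\cap\partial B_R$ transversally), show that $\operatorname{spt}V$ is a finite union of connected, pairwise disjoint, smoothly embedded minimal surfaces $\Gamma_1,\dots,\Gamma_N\subset B_R$ with respect to $g_{Exp}$, with integer multiplicities $m_k$, and that $\Delta^j\to V$ in $C^\infty_{\mathrm{loc}}$ as $m_k$-sheeted graphs over $\Gamma_k$ away from a finite set of points. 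Because the boundary is fixed along every isotopy, $\partial\Delta^j=\partial\Sigma$ for all $j$, and since boundaries are continuous under this convergence, $\partial V=\partial\Sigma$ as integral currents, in particular with multiplicity one along $\partial\Sigma$. As $(B_R,g_{Exp})$ carries no closed minimal surface, each $\Gamma_k$ has nonempty boundary (and, being a surface in $\mathbb{R}^3$, is two-sided), so the identity $\partial\Sigma=\partial V=\sum_k m_k\,\partial\Gamma_k$ with the $\partial\Gamma_k$ pairwise disjoint forces $m_k=1$ for every $k$ and $\bigcup_k\partial\Gamma_k=\partial\Sigma$. This gives (i), with all multiplicities equal to $1$.

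\emph{The genus bound (ii).} Here I would invoke the surgery picture underlying \cite{de2010genus} (cf.\ \cite{meeks1982embedded}): for $j$ large, $\Delta^j$ is isotopic to the surface obtained from a normal graph over $\bigsqcup_{k=1}^N\Gamma_k$ by excising finitely many small disks and gluing in thin tubes localized near the finitely many points where the convergence fails to be smooth. Attaching a tube either joins two components of $\bigsqcup_k\Gamma_k$, which does not decrease the total genus, or adds a handle to a single component; hence the genus of the result is at least $\sum_{k=1}^N g(\Gamma_k)$, and since $\Delta^j$ is isotopic to $\Sigma$ we get $g(\Sigma)=g(\Delta^j)\geq\sum_{k=1}^N g(\Gamma_k)$, which is (ii). This last step is purely topological: it uses only that the $\Grp$-equivariant isotopies are in particular isotopies, the symmetry having served its purpose already in securing the regularity of $V$ across $S$.

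The main obstacle is precisely that last point — the regularity of the limit varifold along the singular locus $S[g]$. Away from $S$ one may quote \cite{meeks1982embedded} and \cite{de2010genus} directly, but near the rotation axes of $S_1[g]$ and the reflection planes of $S_2[g]$ one must rule out higher-multiplicity or genuinely singular behaviour of $V$ and show that each $\Gamma_k$ either contains an axis (respectively coincides with a plane) or meets it transversally (respectively orthogonally). This is exactly the content of Ketover's equivariant regularity analysis \cite{ketover2016equivariant}; it was developed there for equivariant min-max sweepouts, but the replacement and $\gamma$-reduction steps are local and insensitive to whether the competitor surfaces come from a sweepout or from a minimizing sequence, so the analysis transfers verbatim to the present isotopy-minimization problem, with the transversality hypothesis on $\partial\Sigma\cap S\cap\partial B_R$ ensuring that the behaviour near $\partial B_R$ is equally well controlled.
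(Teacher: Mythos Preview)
Your overall strategy matches the paper's: establish regularity of $V$ via equivariant replacements (Meeks--Simon--Yau together with Ketover's equivariant machinery), rule out closed components using the nonexistence of closed self-expanders to force multiplicity one, and then read off the genus bound from the tube/surgery picture. On these points you and the paper agree.

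There is, however, one substantive omission. You treat the singular locus $S[g]$ uniformly and assert that Ketover's analysis in \cite{ketover2016equivariant} covers all of it. The paper is more careful: the horizontal axes in $S_1[g]$ have isotropy $\mathbb{Z}_2$ (rotation by $\pi$), and there the replacement argument indeed goes through essentially as you indicate (the paper cites \cite{ketover2016free}); the reflection planes $S_2[g]$ are handled as a partially-free-boundary problem via \cite{Li2015General}, which you do not mention. But the $z$-axis $l_z$ is different --- its isotropy is the full cyclic group $\mathbb{Z}_{g+1}$, and the paper does \emph{not} attempt a direct replacement argument there. Instead it first proves regularity on $B_R\setminus l_z$, then argues that the residual set $\mathcal{W}:=(\operatorname{spt}\|V\|\cap S)\cap(l_z\cup\partial B_R)$ is finite (using that $V$, once known to be smooth off $l_z$, meets each vertical plane $P_{j\pi/(g+1)}$ orthogonally in finitely many curves), and finally removes these isolated points via the Choi--Schoen removable-singularity theorem \cite{choi1985space}. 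Your sketch does not isolate $l_z$ or the boundary intersections $S\cap\partial B_R$, and ``Ketover's analysis transfers verbatim'' does not by itself cover the higher-order rotational axis; this is the step you should fill in.

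A minor remark on (ii): the paper simply cites \cite[Proposition~4.7]{Ketover2019Genus} for the genus bound, whereas you give the explicit tube-attachment argument behind it. Your argument is correct and is in fact the content of that reference; either route is fine.
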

\begin{remark}
To show the regularity of $V$ up to the boundary, we need to distinguish the following 4 cases.
\begin{enumerate}[(i)]
    \item The interior regularity on $B_R$ away from $S$ follows by Meeks-Simon-Yau \cite[Theorem 1]{meeks1982embedded} and the theory of replacements \cite[Proposition 6.3]{colding2003min}. 
    \item The boundary regularity on $\partial B_R$ away from $S$ follows by \cite[Lemma 8.1]{de2010genus}.
    \item The regularity near points on $S_1 \backslash (\partial B_R \cup l_z)$ (recall \ref{lem:S}) follows from \cite[Proposition 7.3]{ketover2016free}.
    \item The regularity near points on $S_2 \backslash \partial B_R$ (recall \ref{lem:S}) can be treated as the problem near points at a free boundary and follows from \cite[Theorem 7.3, Theorem 8.3]{Li2015General}.
\end{enumerate}
It then remains to show the regularity on $\mathcal{W}:= (\text{supp}||V|| \cap S) \cap (l_z \cup \partial B_R)$. Due to the symmetry and the regularity of $V$ shown above, a careful examination will show that $\mathcal{W}$ is a finite set. By \cite[Proposition 1]{choi1985space}, these points are removable singularities.
\end{remark}

\begin{proof}
Since $\{\Delta_j\}_j$ is a $\Grp$-equivariant minimizing sequence, $V$ is a $\Grp$-stationary varifold and hence  stationary as discussed in Definition \ref{G-equivariant}. It remains to show the regularity of $V$.

First note that by \cite[Lemma 3.4 (2)]{ketover2016equivariant} and the $\Grp$-symmetry, $\Sigma$ must contain all the horizontal axes $\bigcup_{j = 1}^{g + 1} l_{\frac{(2j-1)\pi}{2(g+1)}}$, intersects $l_z$ orthogonally at finitely many points, and orthogonal to the vertical planes $P_{\frac{j \pi}{g+1}}$ for all $j \in \{0, \dots, g\}$. By \cite[Lemma 3.6]{ketover2016equivariant}, each $\Delta_j$ in the $\Grp$-equivariant minimizing sequence are also orthogonal to $l_z$ and has the same number of intersection points with $l_z$ as $\Sigma$. Thus, after passing to a subsequence, we let $\mathcal{P}$  be the finite set of limit points of these intersection points. Since $\{ \Delta_j \}_j$ is a sequence of surfaces of fixed genus, by \cite[Lemma 1.0.14]{Colding2004Space}  or \cite[Proposition 4.12 (5)]{ketover2016equivariant}, there exists a finite set of points $\mathcal{Q}$ such that for any $x \in B_R \backslash \mathcal{Q}$, $g(\Sigma_j \cap B_r(x)) = 0$ for $j \in \mathbb{N}$ sufficiently large and $r > 0$ small enough ,while for any $x \in \mathcal{Q}$, there exists $R_x, g_x > 0$ and a sequence $\{R_{x, j}\}_j \to 0$ such that $g(B_{R_x}(x) \cap \Sigma_j) = g(B_{R_{x, j}}(x) \cap \Sigma_j) = g_x$ for all $j \in \mathbb{N}$ sufficiently large (the proof of \cite[Lemma 1.0.14]{Colding2004Space} works for any sequence of surfaces with fixed genus, not necessarily minimal). Thus, by shrinking the annuli appropriately small, we can define a $\Grp$-equivariant function $r: B_R \to \mathbb{R}^+$ such that for all $j \in \mathbb{N}$ sufficiently large, we have $g(\Sigma_j \cap An) = 0$ and $\Sigma_j \cap An \cap l_z = \emptyset$ for every $An \in \mathcal{AN}_{r(x)}(x)$. Due to technical purposes, we first show the interior regularity of $V$ in $B_R \backslash l_z$. By \cite[Proposition 6.3]{colding2003min}, it suffices to show $V$ has good replacements (see \cite[Definition 6.1, 6.2]{colding2003min}) in every annulus $An \in \mathcal{AN}_{r(x)}(x)$ for $x \in B_R \backslash l_z$. 

Let $x_0 \in B_R \backslash l_z$ and $An \in \mathcal{AN}_{r(x_0)}(x_0)$, we consider the following cases.
\begin{enumerate}[{Case} 1{:}]
\item \label{Case 1}
    $x_0 \in B_R \backslash S$: by shrinking $r(x_0)$ if necessary, we may assume $An \cap S = \emptyset$ for every annulus $An \in \mathcal{AN}_{r(x_0)}(x_0)$. Fix such small $An $ and define $\Grp(An) := \bigcup\limits_{h \in \Grp} h An$, which is the union of $2(g + 1)$ isometric copies of $An$ obtained by $\Grp$-symmetry. Let $\{ \Delta^{j, k}\}^k$ be a $\Grp$-equivariant minimizing sequence for Problem $(\Delta^j, \mathbf{Js}_{\Grp}(\Grp(An)))$ and converges to a varifold $V^j$. Observe that for any isotopy $\varphi \in \mathbf{Js}_{\Grp}(\Grp(An))$, $\varphi$ is uniquely determined by $\varphi |_{An}$, which is an isotopy in $\mathbf{Js}(An, An)$. Hence, Problem $(\Delta^j, \mathbf{Js}_{\Grp}(\Grp(An)))$ is equivalent to Problem $(\Delta^j \cap An, \mathbf{Js}(An, An))$, i.e., $\{ \Delta^{j, k} \cap An\}_k$ is also an minimizing sequence for Problem $(\Delta^j \cap An, \mathbf{Js}(An, An))$. By \cite[Theorem 7.3]{colding2003min}, there exists a stable minimal surface $\Gamma^j$ with $\bar{\Gamma}^j \backslash \Gamma^j \subset \partial An$ and $V^j = \Gamma^j$ in $An$. Since $V^j$ is $\Grp$-equivariant, $V^j = \Grp(\Gamma^j) : = \bigcup\limits_{h \in \Grp} h \Gamma^j$ in $\Grp(An)$. Also note that for any $j \in \mathbb{N}$:
    \begin{equation} \label{Replacement}
        \mathcal{H}^2_w(\Delta^j) \geq \inf_{\varphi \in \mathbf{Js}_{\Grp}(\Grp (An))}
     \mathcal{H}^2_w(\varphi(1, \Delta^{j})) = ||V^j|| \geq \inf_{\varphi \in \mathbf{Js}_{\Grp (B_R)}}\mathcal{H}^2_w(\varphi(1, \Sigma)).
    \end{equation}
After passing to a subsequence, we may assume that $V^j$ converges to a varifold $V^{*}$ and $V^*$ is a stable minimal surface in $G(An)$ by the compactness theorem for stable minimal surfaces \cite{choi1985space}. Moreover, taking $j \to \infty$ in  \eqref{Replacement}, we have
    \[
    ||V^*|| = \inf_{\varphi \in \mathbf{Js}_{\Grp}(B_R)} \mathcal{H}^2_w(\varphi(1, \Sigma))  = || V ||.
    \]
    Therefore, $V^*$ is stationary and is a replacement for $V$ in $\Grp(An)$ \cite[Definition 6.1]{colding2003min}. Now by a diagonal process, we can pick a subsequence $\{\Delta^{j, k(j)}\}_j$ of $\{ \Delta^{j, k}\}$ converging to $V^*$, repeating the same arguments above, we can find a further replacement $V^{**}$ for $V^*$ in $An$. And we can produce arbitrarily many further replacements for $V$ by the same process above. Hence, $V$ has the good replacement property in $An$. 
    
\item \label{Case 2}
    $x_0 \in S_1$, without loss of generality, we may assume $x_0 \in l_{\frac{\pi}{2(g+1)}}$. By shrinking $r(x) > 0$ if necessary, we may assume $An \cap (S \backslash l_{\frac{\pi}{2(g+1)}}) = \emptyset$. Proceeding the same as Case \ref{Case 1}, let $\{\Delta^{j, k}\}_k$ be a $\Grp$-equivariant minimizing sequence for Problem $(\Delta^j, \mathbf{Js}_{\Grp}(\Grp(An)))$, converging to a varifold $V^j$, where $\Grp(An) : = \bigcup\limits_{h \in \Grp} h An$. For any isotopy $\varphi \in \mathbf{Js}_{\Grp}(\Grp(An))$, $\varphi$ is uniquely determined by $\varphi |_{An}$, which is a $\mathbb{Z}_2$-isotopy in $\mathbf{Js}(An)$ where $\mathbb{Z}_2$ is the subgroup of $\Grp$ generated by the rotation of angel $\pi$ around the horizontal line $l_{\frac{\pi}{2(g+1)}}$. Then  $\{\Delta^{j, k} \cap An\}_k$ is a $\mathbb{Z}_2$-equivariant minimizing sequence for Problem $(\Delta^j \cap An, \mathbf{Js}_{\mathbb{Z}_2}(An, An))$ converging to $V^j \mres An$. Now observe that for any small ball $B \subset An$, if $B \cap l_{\frac{\pi}{2(g+1)}} = \emptyset$, $V^j\mres B$  is the varifold limit of the minimizing sequence  $\{\Delta^{j, k} \cap B\}_k$ for Problem ($\Delta_j \cap B$, $\mathbf{Js}(B, B)$) and $V^j$ is smooth in $B$  by Case \ref{Case 1}; if $B$ centered along $l_{\frac{\pi}{2(g+1)}}$, $V^j\mres B$  is the varifold limit of the $\mathbb{Z}_2$-equivariant minimizing sequence  $\{\Delta^{j, k} \cap B\}_k$ of Problem ($\Delta_j \cap B$, $\mathbf{Js}_{\mathbb{Z}_2}(B, B)$), since $g(An \cap \Delta_j) = 0$ for all $j \in \mathbb{N}$ sufficiently large, we can apply \cite[Proposition 7.3]{ketover2016free} to show that $V^j$ is smooth in $B$. Thus, $V^j$ is an embedded $\mathbb{Z}_2$-stable minimal surface in $An$. The only obstruction we have to apply the same arguments as Case \ref{Case 1} is that, we only have $\mathbb{Z}_2$-stability in $An$ for $V^j$ instead of the full stability. However, the proof of \cite[Proposition 6.3]{colding2003min} and arguments in Case \ref{Case 1}only make use of the following compactness theorem for stable minimal surfaces \cite[Section 2.5]{colding2003min}: a sequence of stable minimal surfaces has a convergent subsequence. We can replace this compactness theorem of \cite{schoen1983estimates} by the fact that any sequence of $\mathbb{Z}_2$-stable minimal surfaces with bounded area and genus  has a convergence subsequence converging smoothly away from finitely many points in the singular locus $l_1$ of $\mathbb{Z}_2$. And this fact is shown in \cite[Section 7.2]{ketover2016free}. Thus, the same arguments in Case \ref{Case 1} will conclude the good replacements property of $V$ in $An$.
    
    %\item \label{Case 3}
    %$x_0 \in l_z \backslash \{0\}$, we assume $An \cap ((S \backslash l_z) \cup \{0 \}) = \emptyset$ for every $An \in \mathcal{AN}_{r(x_0)}(x_0)$. Let $\{\Delta^{j, k}\}^k$ be a $\Grp$-equivariant minimizing sequence for Problem $(\Delta^j, \mathbf{Js}_{\Grp}(\Grp(An)))$, converging to a varifold $V^j$. Replacing Problem $(\Delta^j \cap An, \mathbf{Js}_{\mathbb{Z}_2}(An))$ by Problem $(\Delta^j \cap An, \mathbf{Js}_{\mathbb{Z}_{g+1}}(An))$ and \cite[Proposition 7.3]{ketover2016free} by \cite[Proposition 4.14]{ketover2016equivariant}, we proceed the same as \ref{Case 2} and conclude that $V^j \mres An$ is a smoothly embedded $\mathbb{Z}_{g+1}$-stable minimal surface. $\mathbb{Z}_{g + 1}$ is the subgroup of $\Grp$ generated by rotation of angle $\frac{2\pi}{g+1}$ around $l_z$. Now due to the symmetry, $V^j \mres An$ is orthogonal to $l_z$. By \cite[Proposition 4.6]{ketover2016equivariant},$\mathbb{Z}_{g+1}$-stability is equivalent to stability. Thus, $V^j \mres An$ is stable and $V$ has good replacements in $An$ follows from Case \ref{Case 1}.  
    
\item \label{Case 3}
$x_0 \in S_2$. We may assume $x_0 \in P_{0}$ and $r(x_0) < dist(x_0, S \backslash P_0)$. Let $\{\Delta^{j, k}\}_k$ be a $\Grp$-equivariant minimizing sequence for Problem $(\Delta^j, \mathbf{Js}_{\Grp}(\Grp(An)))$, converging to a varifold $V^j$. Proceeding the same as Case \ref{Case 2},  to show the regularity of $V_j$ in $An$, it suffices to show $V_j$ is smoothly embedded in any small ball $B \subset An$ centered on $P_0$. Notice that $\{\Delta^{j, k} \cap B\}_k$ is a $\mathbb{Z}_2$-equivariant minimizing sequence for Problem $(\Delta^j \cap B,\mathbf{Js}_{\mathbb{Z}_2}(B,B))$ converging to $V^j \mres B$, where $\mathbb{Z}_2$ is the subgroup of $\Grp$ generated by the reflection around the vertical plane $P_0$. Let $B_+ : = B \cap \{(x, y, z)|x \geq 0\}$ be the part of $B$ lies on the component of $\mathbb{R}^3 \backslash P_0$ with positive $x$-coordinates. It is easy to see that any $\mathbb{Z}_2$-equivariant vector field $\chi$ on $B$ is uniquely determined by  $\chi |_{B_+}$ and satisfies $\chi(x) \cdot v(x) = 0$ for any $x \in P_0 \cap B$ where $v$ is the unit outer normal of $B_+$ on   $P_0 \cap B$. Thus, Problem $(\Delta^j \cap B,\mathbf{Js}_{\mathbb{Z}_2}(B,B))$ is equivalent to a minimization problem on $B_+$ with partially free boundary (see \cite[Definition 7.2]{Li2015General}). By \cite[Theorem 7.3]{Li2015General}, $V^j$ is smoothly embedded in $B$ and orthogonal to $P_0$. Thus, $V^j$ is smoothly embedded in $An$. Now replacing the definition of replacements and good replacements property \cite[Definition 6.1, 6.2]{colding2003min} by \cite[Definition 8.1, 8.2]{Li2015General} and proceeding the same as in Case \ref{Case 1}, we conclude that  $V$ has good replacements property in $An$ and $V$ is embedded on $P_0 \backslash \partial B_R $ by \cite[Theorem 8.3]{Li2015General}.
\end{enumerate}

On the other hand, let $x_0 \in (\text{supp}||V|| \cap \partial B_R) \backslash S$ and take a small ball $B_x$ away from $S$. We can apply \cite[Lemma 8.1]{de2010genus} to conclude that $V$ has smooth boundary in $B_x$ since the proof there is purely a local argument. Now it remains to show the regularity of $V$ on $\mathcal{W}:= (\text{supp}||V|| \cap S) \cap (l_z \cup \partial B_R)$. We claim that $\mathcal{W}$ is a finite set.

Indeed, due to the symmetry and that $\Theta:= \text{supp}||V|| \mres (B_R \backslash l_z)$ is an embedded minimal surface, $\Theta$ must be orthogonal to the vertical planes $P_{\frac{j \pi}{g+1}}$ for all $j \in \{0, \dots, g\}$. Consequently, by Sard's theorem, $\Theta \cap P_{\frac{j \pi}{g+1}}$ is a finite collection of disjoint simple curves $\{\gamma_1, \dots, \gamma_{N_j} \}$. Then $\mathcal{X}_j:= \text{supp}||V|| \cap P_{\frac{j \pi}{g+1}} \cap (\partial B_R \cup l_z) \subset \bigcup_{k =1}^{N_j}\partial\gamma_{j}$ is a finite set of points for each $j$. Also observe that $\mathcal{Y}:= \partial B_R \cap (\bigcup_{j = 1}^{g + 1} l_{\frac{(2j-1)\pi}{2(g+1)}})$ is a finite set. Thus, $\mathcal{W} \subset \bigcup_{j =0}^{g}\mathcal{X}_j \cup \mathcal{Y}$ has finite cardinality.

By the removable singularity theorem in \cite[Proposition 1]{choi1985space}, we can extend smoothly across $\mathcal{W}$ and have the full regularity of $V$. Thus, $V = \bigcup_{k = 1}^N n_k\Gamma_k$ is a union of  connected disjoint embedded minimal surfaces $\Gamma_1, \dots, \Gamma_N \subset B_R$ with respect to the metric $g_{Exp}$ with integer multiplicity $n_k $. Moreover, we know  that either $\partial \Gamma_k = 0$ or $\partial \Gamma_k$ is the union of some connected components of $\partial \Sigma$. In the latter case, the multiplicity of $\Gamma_k$ is necessarily 1. On the other hand, $\partial \Gamma_k = 0$ cannot occur as there are no closed self-expanders. The genus bound then follows easily from \cite[Proposition 4.7]{Ketover2019Genus}. 
    
\end{proof}

% \bib, bibdiv, biblist are defined by the amsrefs package.
\begin{bibdiv}
\begin{biblist}

\bib{Allard1972Onthe}{article}{
      author={Allard, William~K.},
       title={On the first variation of a varifold},
        date={1972},
        ISSN={0003-486X},
     journal={Ann. of Math. (2)},
      volume={95},
       pages={417\ndash 491},
         url={https://doi.org/10.2307/1970868},
      review={\MR{307015}},
}

\bib{angenent1995computed}{article}{
      author={Angenent, S.},
      author={Ilmanen, T.},
      author={Chopp, D.~L.},
       title={A computed example of nonuniqueness of mean curvature flow in {$\bold R^3$}},
        date={1995},
        ISSN={0360-5302,1532-4133},
     journal={Comm. Partial Differential Equations},
      volume={20},
      number={11-12},
       pages={1937\ndash 1958},
         url={https://doi.org/10.1080/03605309508821158},
      review={\MR{1361726}},
}

\bib{bamler2023multiplicity}{article}{
      author={Bamler, Richard~H},
      author={Kleiner, Bruce},
       title={On the multiplicity one conjecture for mean curvature flows of surfaces},
        date={2024},
      eprint={2312.02106},
         url={https://arxiv.org/abs/2312.02106},
}

\bib{bernstein2024existence}{article}{
      author={Bernstein, Jacob},
      author={Chen, Letian},
      author={Wang, Lu},
       title={Existence of monotone morse flow lines of the expander functional},
        date={2024},
      eprint={2404.08541},
         url={https://arxiv.org/abs/2404.08541},
}

\bib{Bernstein2021Smooth}{article}{
      author={Bernstein, Jacob},
      author={Wang, Lu},
       title={Smooth compactness for spaces of asymptotically conical self-expanders of mean curvature flow},
        date={2021},
        ISSN={1073-7928,1687-0247},
     journal={Int. Math. Res. Not. IMRN},
      number={12},
       pages={9016\ndash 9044},
         url={https://doi.org/10.1093/imrn/rnz087},
      review={\MR{4276312}},
}

\bib{Bernstein2021The}{article}{
      author={Bernstein, Jacob},
      author={Wang, Lu},
       title={The space of asymptotically conical self-expanders of mean curvature flow},
        date={2021},
        ISSN={0025-5831,1432-1807},
     journal={Math. Ann.},
      volume={380},
      number={1-2},
       pages={175\ndash 230},
         url={https://doi.org/10.1007/s00208-021-02147-0},
      review={\MR{4263682}},
}

\bib{Bernstein2022Mountain}{article}{
      author={Bernstein, Jacob},
      author={Wang, Lu},
       title={A mountain-pass theorem for asymptotically conical self-expanders},
        date={2022},
        ISSN={2096-6075,2524-7182},
     journal={Peking Math. J.},
      volume={5},
      number={2},
       pages={213\ndash 278},
         url={https://doi.org/10.1007/s42543-021-00042-w},
      review={\MR{4492654}},
}

\bib{Bernstein2022Topological}{article}{
      author={Bernstein, Jacob},
      author={Wang, Lu},
       title={Topological uniqueness for self-expanders of small entropy},
        date={2022},
        ISSN={2168-0930,2168-0949},
     journal={Camb. J. Math.},
      volume={10},
      number={4},
       pages={785\ndash 833},
         url={https://doi.org/10.4310/cjm.2022.v10.n4.a2},
      review={\MR{4524828}},
}

\bib{Bernstein2023Integer}{article}{
      author={Bernstein, Jacob},
      author={Wang, Lu},
       title={An integer degree for asymptotically conical self-expanders},
        date={2023},
        ISSN={0944-2669,1432-0835},
     journal={Calc. Var. Partial Differential Equations},
      volume={62},
      number={7},
       pages={Paper No. 200, 46},
         url={https://doi.org/10.1007/s00526-023-02541-3},
      review={\MR{4621517}},
}

\bib{Brakke1978Motion}{book}{
      author={Brakke, Kenneth~A.},
       title={The motion of a surface by its mean curvature},
      series={Mathematical Notes},
   publisher={Princeton University Press, Princeton, NJ},
        date={1978},
      volume={20},
        ISBN={0-691-08204-9},
      review={\MR{485012}},
}

\bib{Brendle2016Embedded}{article}{
      author={Brendle, Simon},
       title={Embedded self-similar shrinkers of genus 0},
        date={2016},
        ISSN={0003-486X,1939-8980},
     journal={Ann. of Math. (2)},
      volume={183},
      number={2},
       pages={715\ndash 728},
         url={https://doi.org/10.4007/annals.2016.183.2.6},
      review={\MR{3450486}},
}

\bib{Buzano2025Noncompact}{article}{
      author={Buzano, Reto},
      author={Nguyen, Huy~The},
      author={Schulz, Mario~B.},
       title={Noncompact self-shrinkers for mean curvature flow with arbitrary genus},
        date={2025},
        ISSN={0075-4102,1435-5345},
     journal={J. Reine Angew. Math.},
      volume={818},
       pages={35\ndash 52},
         url={https://doi.org/10.1515/crelle-2024-0073},
      review={\MR{4846020}},
}

\bib{carlotto2020free}{article}{
      author={Carlotto, Alessandro},
      author={Franz, Giada},
      author={Schulz, Mario~B.},
       title={Free boundary minimal surfaces with connected boundary and arbitrary genus},
        date={2022},
        ISSN={2168-0930,2168-0949},
     journal={Camb. J. Math.},
      volume={10},
      number={4},
       pages={835\ndash 857},
         url={https://doi.org/10.4310/cjm.2022.v10.n4.a3},
      review={\MR{4524829}},
}

\bib{Chen2023Rotational}{article}{
      author={Chen, Letian},
       title={Rotational symmetry of solutions of mean curvature flow coming out of a double cone {II}},
        date={2023},
        ISSN={0944-2669,1432-0835},
     journal={Calc. Var. Partial Differential Equations},
      volume={62},
      number={2},
       pages={Paper No. 70, 32},
         url={https://doi.org/10.1007/s00526-022-02417-y},
      review={\MR{4530317}},
}

\bib{Chodosh2024Generic}{article}{
      author={Chodosh, Otis},
      author={Choi, Kyeongsu},
      author={Mantoulidis, Christos},
      author={Schulze, Felix},
       title={Mean curvature flow with generic initial data},
        date={2024},
        ISSN={0020-9910,1432-1297},
     journal={Invent. Math.},
      volume={237},
      number={1},
       pages={121\ndash 220},
         url={https://doi.org/10.1007/s00222-024-01258-0},
      review={\MR{4756990}},
}

\bib{Chodosh2023Mean}{article}{
      author={Chodosh, Otis},
      author={Choi, Kyeongsu},
      author={Schulze, Felix},
       title={Mean curvature flow with generic initial data {II}},
        date={2023},
     journal={arXiv preprint arXiv:2302.08409},
      eprint={2302.08409},
         url={https://arxiv.org/abs/2302.08409},
}

\bib{Chodosh2024Conical}{article}{
      author={Chodosh, Otis},
      author={Daniels-Holgate, J.~M.},
      author={Schulze, Felix},
       title={Mean curvature flow from conical singularities},
        date={2024},
        ISSN={0020-9910,1432-1297},
     journal={Invent. Math.},
      volume={238},
      number={3},
       pages={1041\ndash 1066},
         url={https://doi.org/10.1007/s00222-024-01296-8},
      review={\MR{4824733}},
}

\bib{Chodosh2021Uniqueness}{article}{
      author={Chodosh, Otis},
      author={Schulze, Felix},
       title={Uniqueness of asymptotically conical tangent flows},
        date={2021},
        ISSN={0012-7094,1547-7398},
     journal={Duke Math. J.},
      volume={170},
      number={16},
       pages={3601\ndash 3657},
         url={https://doi.org/10.1215/00127094-2020-0098},
      review={\MR{4332673}},
}

\bib{choi1985space}{article}{
      author={Choi, Hyeong~In},
      author={Schoen, Richard},
       title={The space of minimal embeddings of a surface into a three-dimensional manifold of positive {R}icci curvature},
        date={1985},
        ISSN={0020-9910,1432-1297},
     journal={Invent. Math.},
      volume={81},
      number={3},
       pages={387\ndash 394},
         url={https://doi.org/10.1007/BF01388577},
      review={\MR{807063}},
}

\bib{colding2003min}{incollection}{
      author={Colding, Tobias~H.},
      author={De~Lellis, Camillo},
       title={The min-max construction of minimal surfaces},
        date={2003},
   booktitle={Surveys in differential geometry, {V}ol.\ {VIII} ({B}oston, {MA}, 2002)},
      series={Surv. Differ. Geom.},
      volume={8},
   publisher={Int. Press, Somerville, MA},
       pages={75\ndash 107},
         url={https://doi.org/10.4310/SDG.2003.v8.n1.a3},
      review={\MR{2039986}},
}

\bib{Colding2004Space}{article}{
      author={Colding, Tobias~H.},
      author={Minicozzi, William~P., II},
       title={The space of embedded minimal surfaces of fixed genus in a 3-manifold. {III}. {P}lanar domains},
        date={2004},
        ISSN={0003-486X,1939-8980},
     journal={Ann. of Math. (2)},
      volume={160},
      number={2},
       pages={523\ndash 572},
         url={https://doi.org/10.4007/annals.2004.160.523},
      review={\MR{2123932}},
}

\bib{Colding2013Round}{article}{
      author={Colding, Tobias~Holck},
      author={Ilmanen, Tom},
      author={Minicozzi, William~P., II},
      author={White, Brian},
       title={The round sphere minimizes entropy among closed self-shrinkers},
        date={2013},
        ISSN={0022-040X,1945-743X},
     journal={J. Differential Geom.},
      volume={95},
      number={1},
       pages={53\ndash 69},
         url={http://projecteuclid.org/euclid.jdg/1375124609},
      review={\MR{3128979}},
}

\bib{de2010genus}{article}{
      author={De~Lellis, Camillo},
      author={Pellandini, Filippo},
       title={Genus bounds for minimal surfaces arising from min-max constructions},
        date={2010},
        ISSN={0075-4102,1435-5345},
     journal={J. Reine Angew. Math.},
      volume={644},
       pages={47\ndash 99},
         url={https://doi.org/10.1515/CRELLE.2010.052},
      review={\MR{2671775}},
}

\bib{ding2020minimal}{article}{
      author={Ding, Qi},
       title={Minimal cones and self-expanding solutions for mean curvature flows},
        date={2020},
        ISSN={0025-5831,1432-1807},
     journal={Math. Ann.},
      volume={376},
      number={1-2},
       pages={359\ndash 405},
         url={https://doi.org/10.1007/s00208-019-01941-1},
      review={\MR{4055164}},
}

\bib{Ecker1989Mean}{article}{
      author={Ecker, Klaus},
      author={Huisken, Gerhard},
       title={Mean curvature evolution of entire graphs},
        date={1989},
        ISSN={0003-486X,1939-8980},
     journal={Ann. of Math. (2)},
      volume={130},
      number={3},
       pages={453\ndash 471},
         url={https://doi.org/10.2307/1971452},
      review={\MR{1025164}},
}

\bib{HMW}{article}{
      author={Hoffman, David},
      author={Martin, Francisco},
      author={White, Brian},
       title={Generating shrinkers by mean curvature flow},
        date={2025},
      eprint={2502.20340},
         url={https://arxiv.org/abs/2502.20340},
}

\bib{Hoffman1990Embedded}{article}{
      author={Hoffman, David},
      author={Meeks, William~H., III},
       title={Embedded minimal surfaces of finite topology},
        date={1990},
        ISSN={0003-486X,1939-8980},
     journal={Ann. of Math. (2)},
      volume={131},
      number={1},
       pages={1\ndash 34},
         url={https://doi.org/10.2307/1971506},
      review={\MR{1038356}},
}

\bib{ilmanen1998lectures}{misc}{
      author={Ilmanen, Tom},
       title={Lectures on mean curvature flow and related equations},
        type={Educational Material},
    language={en},
     edition={Draft Version, recompiled 1998},
   publisher={Swiss Federal Institute of Technology, Department of Mathematics},
     address={Zürich},
        date={1998},
}

\bib{ilmanen2024fattening}{article}{
      author={Ilmanen, Tom},
      author={White, Brian},
       title={Fattening in mean curvature flow},
        date={2025},
        ISSN={2769-8505},
     journal={Ars Inven. Anal.},
       pages={Paper No. 4, 32},
      review={\MR{4888468}},
}

\bib{kapouleas:1997}{article}{
      author={Kapouleas, Nikolaos},
       title={Complete embedded minimal surfaces of finite total curvature},
        date={1997},
        ISSN={0022-040X,1945-743X},
     journal={J. Differential Geom.},
      volume={47},
      number={1},
       pages={95\ndash 169},
         url={http://projecteuclid.org/euclid.jdg/1214460038},
      review={\MR{1601434}},
}

\bib{ketover2016equivariant}{article}{
      author={Ketover, Daniel},
       title={Equivariant min-max theory},
        date={2016},
      eprint={1612.08692},
         url={https://arxiv.org/abs/1612.08692},
}

\bib{ketover2016free}{article}{
      author={Ketover, Daniel},
       title={Free boundary minimal surfaces of unbounded genus},
        date={2016},
      eprint={1612.08691},
         url={https://arxiv.org/abs/1612.08691},
}

\bib{Ketover2019Genus}{article}{
      author={Ketover, Daniel},
       title={Genus bounds for min-max minimal surfaces},
        date={2019},
        ISSN={0022-040X,1945-743X},
     journal={J. Differential Geom.},
      volume={112},
      number={3},
       pages={555\ndash 590},
         url={https://doi.org/10.4310/jdg/1563242473},
      review={\MR{3981297}},
}

\bib{ketover2024self}{article}{
      author={Ketover, Daniel},
       title={Self-shrinkers whose asymptotic cones fatten},
        date={2024},
      eprint={2407.01240},
         url={https://arxiv.org/abs/2407.01240},
}

\bib{lee2024closed}{article}{
      author={Lee, Tang-Kai},
      author={Zhao, Xinrui},
       title={Closed mean curvature flows with asymptotically conical singularities},
        date={2024},
      eprint={2405.15577},
         url={https://arxiv.org/abs/2405.15577},
}

\bib{Li2015General}{article}{
      author={Li, Martin Man-chun},
       title={A general existence theorem for embedded minimal surfaces with free boundary},
        date={2015},
        ISSN={0010-3640,1097-0312},
     journal={Comm. Pure Appl. Math.},
      volume={68},
      number={2},
       pages={286\ndash 331},
         url={https://doi.org/10.1002/cpa.21513},
      review={\MR{3298664}},
}

\bib{maggi2012sets}{book}{
      author={Maggi, Francesco},
       title={Sets of finite perimeter and geometric variational problems},
      series={Cambridge Studies in Advanced Mathematics},
   publisher={Cambridge University Press, Cambridge},
        date={2012},
      volume={135},
        ISBN={978-1-107-02103-7},
         url={https://doi.org/10.1017/CBO9781139108133},
        note={An introduction to geometric measure theory},
      review={\MR{2976521}},
}

\bib{meeks1982embedded}{article}{
      author={Meeks, William, III},
      author={Simon, Leon},
      author={Yau, Shing~Tung},
       title={Embedded minimal surfaces, exotic spheres, and manifolds with positive {R}icci curvature},
        date={1982},
        ISSN={0003-486X},
     journal={Ann. of Math. (2)},
      volume={116},
      number={3},
       pages={621\ndash 659},
         url={https://doi.org/10.2307/2007026},
      review={\MR{678484}},
}

\bib{schoen1983estimates}{incollection}{
      author={Schoen, Richard},
       title={Estimates for stable minimal surfaces in three-dimensional manifolds},
        date={1983},
   booktitle={Seminar on minimal submanifolds},
      series={Ann. of Math. Stud.},
      volume={103},
   publisher={Princeton Univ. Press, Princeton, NJ},
       pages={111\ndash 126},
      review={\MR{795231}},
}

\bib{GZ1}{article}{
      author={Shao, Guanhua},
      author={Zou, Jiahua},
       title={Self-shrinkers with any number of ends in $\mathbb{R}^{3}$ by stacking $\mathbb{R}^{2}$},
        date={2025},
      eprint={2507.18825},
         url={https://arxiv.org/abs/2507.18825},
}

\bib{simon1984lectures}{book}{
      author={Simon, Leon},
       title={Lectures on geometric measure theory},
      series={Proceedings of the Centre for Mathematical Analysis, Australian National University},
   publisher={Australian National University, Centre for Mathematical Analysis, Canberra},
        date={1983},
      volume={3},
        ISBN={0-86784-429-9},
      review={\MR{756417}},
}

\bib{Wang2014Uniqueness}{article}{
      author={Wang, Lu},
       title={Uniqueness of self-similar shrinkers with asymptotically conical ends},
        date={2014},
        ISSN={0894-0347,1088-6834},
     journal={J. Amer. Math. Soc.},
      volume={27},
      number={3},
       pages={613\ndash 638},
         url={https://doi.org/10.1090/S0894-0347-2014-00792-X},
      review={\MR{3194490}},
}

\bib{White1995Topology}{article}{
      author={White, Brian},
       title={The topology of hypersurfaces moving by mean curvature},
        date={1995},
        ISSN={1019-8385,1944-9992},
     journal={Comm. Anal. Geom.},
      volume={3},
      number={1-2},
       pages={317\ndash 333},
         url={https://doi.org/10.4310/CAG.1995.v3.n2.a5},
      review={\MR{1362655}},
}

\bib{wikipedia-scherk}{misc}{
      author={{Wikipedia contributors}},
       title={Scherk surface --- wikipedia{,} the free encyclopedia},
        date={2024},
         url={https://en.wikipedia.org/wiki/Scherk_surface},
        note={Accessed: 2025-06-25},
}

\end{biblist}
\end{bibdiv}

\end{document}